\definecolor{darkblue}{rgb}{0.0,0,0.7} 
\newcommand{\darkblue}{\color{darkblue}} 
\numberwithin{equation}{section}
\newtheorem{theorem}{Theorem}[section]
\newtheorem{proposition}[theorem]{Proposition}
\newtheorem{corollary}[theorem]{Corollary}
\theoremstyle{definition}
\newtheorem{definition}[theorem]{Definition}
\newtheorem{example}[theorem]{Example}
\theoremstyle{remark}
\newtheorem{remark}[theorem]{Remark}
\crefname{figure}{Figure}{Figures}
\newcommand{\len}{\mathrm{len}}
\newlist{thmlist}{enumerate}{1}
\setlist[thmlist]{label=(\arabic{thmlisti}), ref=\thetheorem(\arabic{thmlisti}),noitemsep}
\newcommand{\defn}[1]{\emph{\darkblue #1}}
\renewcommand{\d}{d}
\renewcommand{\SS}{Q}
\newcommand{\M}{\mathcal{M}}
\newcommand{\E}{\mathbb{E}}
\newcommand{\Mm}{\boldsymbol{\M}}
\newcommand{\Nn}{\boldsymbol{\mathcal{N}}}
\newcommand{\Vv}{\boldsymbol{\mathcal{V}}}
\newcommand{\Mb}{\mathcal{W}}
\newcommand{\MMM}{\mathbb{M}}
\newcommand{\WWW}{\mathbb{W}}
\newcommand{\Mbm}{\boldsymbol{\Mb}}
\newcommand{\bija}{A}
\renewcommand{\P}{P}
\newcommand{\Pt}{\P^\times}
\newcommand{\Pl}{\P^\leq}
\newcommand{\W}{\mathcal{W}}
\newcommand{\Ww}{\boldsymbol{\W}}
\newcommand{\NC}{\mathrm{NC}}
\newcommand{\Var}{\mathrm{Var}}
\title[Wiener Indices of Minuscule Lattices]{Wiener Indices of Minuscule Lattices}
\author[C.~Defant]{Colin Defant}
\address[C.~Defant]{Massachusetts Institute of Technology}
\email{colindefant@gmail.com}
\author[V.~F\'eray]{Valentin F\'eray}
\address[V.~F\'eray]{Universit\'e de Lorraine, CNRS, IECL, F-54000, Nancy, France}
\email{valentin.feray@univ-lorraine.fr}
\author[P.~Nadeau]{Philippe Nadeau}
\address[P.~Nadeau]{Univ Lyon, CNRS, Université Claude Bernard Lyon 1, Institut Camille Jordan, F-69622 Villeurbanne Cedex, France}
\email{nadeau@math.univ-lyon1.fr}
\author[N.~Williams]{Nathan Williams}
\address[N.~Williams]{University of Texas at Dallas}
\email{nathan.williams1@utdallas.edu}
\keywords{}
\subjclass[2000]{}
\begin{document}

\begin{abstract}
The \emph{Wiener index} of a finite graph $G$ is the sum over all pairs $(p,q)$ of vertices of $G$ of the distance between $p$ and $q$.  When $P$ is a finite poset, we define its \emph{Wiener index} as the Wiener index of the graph of its Hasse diagram.  In this paper, we find exact expressions for the Wiener indices of the distributive lattices of order ideals in minuscule posets.
For infinite families of such posets, we also provide results on the asymptotic distribution of the distance between two random order ideals.
\end{abstract}

\maketitle

\section{Introduction}

\subsection{Background: the Wiener index of the noncrossing partition lattice}

Let $\NC(n)$ be the lattice of noncrossing partitions
of $n$.
In the paper~\cite{goulden2020asymptotics},
motivated by problems about meanders and meandric systems, Goulden, Nica, and Puder raised the following question: what is the average distance between two (uniform)
random partitions in $\NC(n)$?
The question was answered for large $n$ by Th\'evenin
and the second author in~\cite{feray2022components},
where it was proved that this average distance behaves as $\kappa\, n$ for some constant $\kappa$.

It is natural to ask similar questions for other families of posets, looking either for an exact nice formula or for an asymptotic answer.
When the number of elements is known (which is the case for $\NC(n)$),
one can equivalently ask for the sum of distances between all pairs of elements.

In general, let $G=(V,E)$ be a finite connected graph, and for $p,q\in V$, write $\d(p,q)$ for the distance in $G$ from $p$ to $q$.  The \defn{Wiener index} of $G$ is defined to be
      \begin{equation}\label{eq:dist}
            \d(G):=\sum_{(p,q) \in V \times V} \d(p,q).
      \end{equation}
This definition has its origin as the \emph{Wiener index} predicting the boiling point of certain organic compounds~\cite{wiener1947structural,wikiwiener,rouvray2002rich}, and it has also been called the \emph{distance} of the graph $G$~\cite{entringer1976distance}.  When $P$ is a poset, we define $\d(P):=\d(G(P))$ for the Wiener index of the Hasse diagram $G(P)$ of $P$ (that is, the vertices of $G(P)$ are the elements of $P$, and there is an edge in $G(P)$ between $p$ and $q$ when there is a cover relation in $P$ between $p$ and $q$). 

In the case of the noncrossing partition lattice, the results in~\cite{feray2022components} imply that
\[d(\NC(n)) \sim |\NC(n)|^2 \kappa \, n \sim \frac{\kappa\, 8^n}{\pi n^2},\]
but no exact enumeration appears possible.
(There does not even seem
to be a simple formula for $\kappa$;
see the discussion in~\cite{feray2022components}
and the related open problem in~\cite{ober2022}.)

\subsection{Wiener indices of other lattices}
Computer experiments suggest that there are few nontrivial families of graphs $\{G_n\}_{n\geq 1}$ of combinatorial objects for which it is possible to find \emph{exact} formulas for the Wiener index.  
First, there are families with elementary exact solutions, for which $|G_n|$ is a relatively small polynomial in $n$ (for example: path graphs, grid graphs, etc.), 
or in which each graph $G_n$ has a transitive underlying symmetry group (for example: the weak order on a finite Coxeter group, a boolean lattice or hypercube, etc.).  A short list of examples is given in~\cite{weissteinwiener}. 

There is, however, one class of posets in algebraic combinatorics that demonstrates consistently exceptional enumerative behavior: the minuscule lattices~\cite{proctor1984bruhat}.  For example, both the number of elements and the number of maximal chains in a minuscule lattice have simple (uniformly stated and proven) product formulas, and the minuscule lattices are well understood from the perspective of dynamical algebraic combinatorics (promotion, rowmotion, etc.)~\cite{striker2012promotion,hopkins2020order}.

It turns out that the Wiener indices of minuscule lattices also admit simple exact formulas,
and one of the goal of the present paper is to establish such formulas.

For completeness, we first recall the definition of minuscule lattices
(note that we will only use here their classification, and not the algebraic definition).  Let $\mathfrak{g}$ be a complex simple Lie group with Weyl group $W$. Fix a set $\Phi^+$ of positive roots of $\mathfrak g$, and let $\Lambda^+$ be the set of dominant weights.  The finite-dimensional irreducible complex representations $V_\lambda$ of $\mathfrak{g}$ are indexed by dominant weights $\lambda \in \Lambda^+$; $\lambda$ is called \defn{minuscule} if the $W$-orbit of $\lambda$ is the set of \emph{all} weights in $V_\lambda$.  The minuscule weights are exactly those fundamental weights whose corresponding simple roots appear exactly once in the simple root expansion of the highest root.  For more information, we refer the reader to~\cite{stanley1980weyl,proctor1984bruhat}.

For $\lambda$ minuscule, define a poset on the weights in $V_\lambda$ by introducing a cover relation $\mu \lessdot \nu$ whenever $\mu + \alpha = \nu$ for some simple root $\alpha \in \Phi^+$.  This poset on $V_\lambda$ is a distributive lattice, which we call a \defn{minuscule lattice}~\cite{proctor1984bruhat}.

There are three infinite families of minuscule lattices---the order ideals in:
\begin{itemize}
\item a rectangle (type $A$; in type $B$, minuscule lattices are chains and thus particular cases of rectangles),
\item a shifted staircase (types $C$ and $D$), and
\item a ``double tailed diamond'' (type $D$)
\end{itemize}
---as well as two exceptional minuscule lattices (of types $E_6$ and $E_7$).
\medskip

In this paper, we show that the Wiener indices of the infinite families of minuscule lattices  admit simple product formulas, although we regrettably have been unable to find a unifying expression for these formulas.  We also provide information about the asymptotic distribution
of the distance between random elements in these posets (in the rectangle and shifted staircase cases),
and we give a method for computing the higher moments exactly.


\subsection{Rectangles}
Write $J(P)$ for the lattice of order ideals in a finite poset $P$, ordered by inclusion. By Birhoff's representation theorem, any distributive lattice is of this form. In~\Cref{sec:rect_bijections,sec:proof_rect_thm,sec:proof_rect_cors}, we consider the Wiener index of $\P_{m,k}=J([m]\times[k])$, the lattice of order ideals in an $m \times k$ rectangle.  
In this case (and, more generally, for 
any distributive lattice), it is easy to see that $\d(p,q)=|p \vartriangle q|$, where $p \vartriangle q = (p \backslash q) \cup (q \backslash p)$ is the symmetric difference of the order ideals $p$ and $q$.

It will be convenient to draw the elements of $\P_{m,k}$ as lattice paths from $(0,0)$ to $(m+k,m-k)$ using steps of the form $U=(1,1)$ and $D=(1,-1)$.  
Writing $p_i,q_i$ for the heights of $p$ and $q$ after the ends of their $i$th steps, the number of squares in column $i$ between the lattice paths $p$ and $q$ is given as $\left|\frac{q_i-p_i}{2}\right|$, so that 
\begin{equation}\label{eq:distance}
      \d(p,q) = \left|\frac{q_1-p_1}{2}\right|+\left|\frac{q_2-p_2}{2}\right|+\cdots+\left|\frac{q_{m+k}-p_{m+k}}{2}\right|.
\end{equation}

\begin{example}
    The graph $G(\P_{2,2})$ is drawn in \Cref{fig:2x2}. Its Wiener index is
      \begin{align*}
            56 = \frac{4}{18}\binom{10}{5}=&(0{+}1{+}2{+}2{+}3{+}4){+}(1{+}0{+}1{+}1{+}2{+}3){+}(2{+}1{+}0{+}2{+}1{+}2){+}\\&{+}(2{+}1{+}2{+}0{+}1{+}2){+}(3{+}2{+}1{+}1{+}0{+}1){+}(4{+}3{+}2{+}2{+}1{+}0).
      \end{align*}
\end{example}

\begin{figure}[htbp]

\begin{tikzpicture}
      \node (5) at (1,4.5) {
            \begin{tikzpicture}[scale=.25]
                  \node[inner sep=0pt] (a) at (0,0) {};
                  \node[inner sep=0pt] (b) at (1,1) {};
                  \node[inner sep=0pt] (c) at (2,2) {};
                  \node[inner sep=0pt] (d) at (1,-1) {};
                  \node[inner sep=0pt] (e) at (2,0) {};
                  \node[inner sep=0pt] (f) at (3,1) {};
                  \node[inner sep=0pt] (g) at (2,-2) {};
                  \node[inner sep=0pt] (h) at (3,-1) {};
                  \node[inner sep=0pt] (i) at (4,0) {};
                  \filldraw[lightgray,fill=lightgray] (a.center) to (c.center) to (i.center) to (g.center) to (a.center);
                  \draw[gray,line cap=round] (a.center) to (b.center) to (c.center);
                  \draw[gray,line cap=round] (d.center) to (e.center) to (f.center);
                  \draw[gray,line cap=round] (g.center) to (h.center) to (i.center);
                  \draw[gray,line cap=round] (a.center) to (d.center) to (g.center);
                  \draw[gray,line cap=round] (b.center) to (e.center) to (h.center);
                  \draw[gray,line cap=round] (c.center) to (f.center) to (i.center);
                  \draw[black,line cap=round,very thick] (a.center) to (c.center) to (i.center);
            \end{tikzpicture}};
      \node (4) at (0,3.5) {
            \begin{tikzpicture}[scale=.25]
                  \node[inner sep=0pt] (a) at (0,0) {};
                  \node[inner sep=0pt] (b) at (1,1) {};
                  \node[inner sep=0pt] (c) at (2,2) {};
                  \node[inner sep=0pt] (d) at (1,-1) {};
                  \node[inner sep=0pt] (e) at (2,0) {};
                  \node[inner sep=0pt] (f) at (3,1) {};
                  \node[inner sep=0pt] (g) at (2,-2) {};
                  \node[inner sep=0pt] (h) at (3,-1) {};
                  \node[inner sep=0pt] (i) at (4,0) {};
                  \filldraw[lightgray,fill=lightgray] (a.center) to (b.center) to (e.center) to (f.center) to (i.center) to (g.center) to (a.center);
                  \draw[gray,line cap=round] (a.center) to (b.center) to (c.center);
                  \draw[gray,line cap=round] (d.center) to (e.center) to (f.center);
                  \draw[gray,line cap=round] (g.center) to (h.center) to (i.center);
                  \draw[gray,line cap=round] (a.center) to (d.center) to (g.center);
                  \draw[gray,line cap=round] (b.center) to (e.center) to (h.center);
                  \draw[gray,line cap=round] (c.center) to (f.center) to (i.center);
                  \draw[black,line cap=round,very thick] (a.center) to (b.center) to (e.center) to (f.center) to (i.center);
            \end{tikzpicture}};
      \node (3) at (1,2.5) {
            \begin{tikzpicture}[scale=.25]
                  \node[inner sep=0pt] (a) at (0,0) {};
                  \node[inner sep=0pt] (b) at (1,1) {};
                  \node[inner sep=0pt] (c) at (2,2) {};
                  \node[inner sep=0pt] (d) at (1,-1) {};
                  \node[inner sep=0pt] (e) at (2,0) {};
                  \node[inner sep=0pt] (f) at (3,1) {};
                  \node[inner sep=0pt] (g) at (2,-2) {};
                  \node[inner sep=0pt] (h) at (3,-1) {};
                  \node[inner sep=0pt] (i) at (4,0) {};
                  \filldraw[lightgray,fill=lightgray] (a.center) to (d.center) to (f.center) to (i.center) to (g.center) to (a.center);
                  \draw[gray,line cap=round] (a.center) to (b.center) to (c.center);
                  \draw[gray,line cap=round] (d.center) to (e.center) to (f.center);
                  \draw[gray,line cap=round] (g.center) to (h.center) to (i.center);
                  \draw[gray,line cap=round] (a.center) to (d.center) to (g.center);
                  \draw[gray,line cap=round] (b.center) to (e.center) to (h.center);
                  \draw[gray,line cap=round] (c.center) to (f.center) to (i.center);
                  \draw[black,line cap=round,very thick] (a.center) to (d.center) to (f.center) to (i.center);
            \end{tikzpicture}};
      \node (2) at (-1,2.5) {
            \begin{tikzpicture}[scale=.25]
                  \node[inner sep=0pt] (a) at (0,0) {};
                  \node[inner sep=0pt] (b) at (1,1) {};
                  \node[inner sep=0pt] (c) at (2,2) {};
                  \node[inner sep=0pt] (d) at (1,-1) {};
                  \node[inner sep=0pt] (e) at (2,0) {};
                  \node[inner sep=0pt] (f) at (3,1) {};
                  \node[inner sep=0pt] (g) at (2,-2) {};
                  \node[inner sep=0pt] (h) at (3,-1) {};
                  \node[inner sep=0pt] (i) at (4,0) {};
                  \filldraw[lightgray,fill=lightgray] (a.center) to (b.center) to (h.center) to (i.center) to (g.center) to (a.center);
                  \draw[gray,line cap=round] (a.center) to (b.center) to (c.center);
                  \draw[gray,line cap=round] (d.center) to (e.center) to (f.center);
                  \draw[gray,line cap=round] (g.center) to (h.center) to (i.center);
                  \draw[gray,line cap=round] (a.center) to (d.center) to (g.center);
                  \draw[gray,line cap=round] (b.center) to (e.center) to (h.center);
                  \draw[gray,line cap=round] (c.center) to (f.center) to (i.center);
                  \draw[black,line cap=round,very thick] (a.center) to (b.center) to (h.center) to (i.center);
            \end{tikzpicture}};
      \node (1) at (0,1.5) {
            \begin{tikzpicture}[scale=.25]
                  \node[inner sep=0pt] (a) at (0,0) {};
                  \node[inner sep=0pt] (b) at (1,1) {};
                  \node[inner sep=0pt] (c) at (2,2) {};
                  \node[inner sep=0pt] (d) at (1,-1) {};
                  \node[inner sep=0pt] (e) at (2,0) {};
                  \node[inner sep=0pt] (f) at (3,1) {};
                  \node[inner sep=0pt] (g) at (2,-2) {};
                  \node[inner sep=0pt] (h) at (3,-1) {};
                  \node[inner sep=0pt] (i) at (4,0) {};
                  \filldraw[lightgray,fill=lightgray] (a.center) to (d.center) to (e.center) to (h.center) to (i.center) to (g.center) to (a.center);
                  \draw[gray,line cap=round] (a.center) to (b.center) to (c.center);
                  \draw[gray,line cap=round] (d.center) to (e.center) to (f.center);
                  \draw[gray,line cap=round] (g.center) to (h.center) to (i.center);
                  \draw[gray,line cap=round] (a.center) to (d.center) to (g.center);
                  \draw[gray,line cap=round] (b.center) to (e.center) to (h.center);
                  \draw[gray,line cap=round] (c.center) to (f.center) to (i.center);
                  \draw[black,line cap=round,very thick] (a.center) to (d.center) to (e.center) to (h.center) to (i.center);
            \end{tikzpicture}};
      \node (0) at (1,.5) {
            \begin{tikzpicture}[scale=.25]
                  \node[inner sep=0pt] (a) at (0,0) {};
                  \node[inner sep=0pt] (b) at (1,1) {};
                  \node[inner sep=0pt] (c) at (2,2) {};
                  \node[inner sep=0pt] (d) at (1,-1) {};
                  \node[inner sep=0pt] (e) at (2,0) {};
                  \node[inner sep=0pt] (f) at (3,1) {};
                  \node[inner sep=0pt] (g) at (2,-2) {};
                  \node[inner sep=0pt] (h) at (3,-1) {};
                  \node[inner sep=0pt] (i) at (4,0) {};
                  \draw[gray,line cap=round] (a.center) to (b.center) to (c.center);
                  \draw[gray,line cap=round] (d.center) to (e.center) to (f.center);
                  \draw[gray,line cap=round] (g.center) to (h.center) to (i.center);
                  \draw[gray,line cap=round] (a.center) to (d.center) to (g.center);
                  \draw[gray,line cap=round] (b.center) to (e.center) to (h.center);
                  \draw[gray,line cap=round] (c.center) to (f.center) to (i.center);
                  \draw[black,line cap=round,very thick] (a.center) to (g.center) to (i.center);
            \end{tikzpicture}};
      \draw[thick] (0) to (1) to (2) to (4) to (5);
      \draw[thick] (1) to (3) to (4);
\end{tikzpicture}

\caption{The Hasse diagram $G(\P_{2,2})$ of order ideals in a $2 \times 2$ square.}
\label{fig:2x2}
\end{figure}
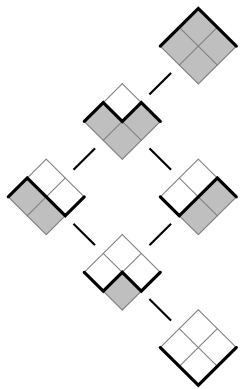

We have three results that completely describe the Wiener index of the lattice of order ideals of a rectangle.
\begin{theorem}\label{thm:gf}
      The generating function for the Wiener index of all posets $\P_{m,k}=J([m]\times[k])$ is given by
      \begin{equation}\label{eq:total_geo}
            \sum_{m=0}^\infty \sum_{k=0}^\infty \d(P_{m,k})x^m y^k = \frac{2xy}{(x^2 - 2xy + y^2 - 2x - 2y + 1)^2}.
      \end{equation} 
\end{theorem}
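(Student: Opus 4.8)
The plan is to express $\d(\P_{m,k})$ as the cardinality of an explicit family of combinatorial configurations and then read off its bivariate generating function by a short one‑variable (kernel‑method) computation. Recall that an order ideal of $[m]\times[k]$ is exactly a word in $U,D$ with $m$ letters $U$ and $k$ letters $D$, and that there are $\binom{m+k}{m}$ of them. For such a word $p$ and $0\le i\le m+k$, write $u_i(p)$ for the number of $U$'s among its first $i$ letters, so that $p_i=2u_i(p)-i$ and, by \eqref{eq:distance}, $\d(p,q)=\sum_{i=1}^{m+k}|u_i(p)-u_i(q)|$. Using $|a-b|=(a-b)_++(b-a)_+$ together with the symmetry $p\leftrightarrow q$, I would first rewrite
\[
\d(\P_{m,k})=2\sum_{p,q}\sum_{i=0}^{m+k}\bigl(u_i(q)-u_i(p)\bigr)_+=2\,\#\bigl\{(p,q,i,\ell):0\le i\le m+k,\ \ell\ge0,\ u_i(p)\le\ell<u_i(q)\bigr\},
\]
since $(u_i(q)-u_i(p))_+=\#\{\ell\ge0:u_i(p)\le\ell<u_i(q)\}$. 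The left‑hand side of \eqref{eq:total_geo} is then $2\,\Phi(x,y)$, where $\Phi(x,y)$ is the generating function for these quadruples, weighted by $x^my^k$.

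Next I would compute $\Phi$ by cutting both $p$ and $q$ after position $i$: write $p=p_1p_2$ and $q=q_1q_2$ with $|p_1|=|q_1|=i$. The condition on $\ell$ involves only $(p_1,q_1)$, while the requirement that $p$ and $q$ have the same content, after setting $\delta:=u_i(q)-u_i(p)\ge1$, says precisely that $p_2$ has $\delta$ more $U$'s (hence $\delta$ fewer $D$'s) than $q_2$; moreover there are exactly $\delta$ admissible values of $\ell$. Since letting $i$ run over $\{0,\dots,m+k\}$ amounts to letting $p_1,p_2$ range independently over all words, and since $x^my^k$ factors as $x^{\#U(p_1)}y^{\#D(p_1)}\cdot x^{\#U(p_2)}y^{\#D(p_2)}$, the count factors through the value of $\delta$:
\[
\Phi(x,y)=\sum_{\delta\ge1}\delta\,\widetilde C_\delta(x,y)\,C_\delta(x,y),\qquad C_\delta(x,y):=\sum_{a,b\ge0}\binom{a+b}{a}\binom{a+b}{a-\delta}x^ay^b,
\]
with $C_\delta$ enumerating the pairs $(p_2,q_2)$ and $\widetilde C_\delta(x,y):=C_\delta(y,x)$ enumerating the pairs $(p_1,q_1)$.

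The technical heart of the argument, and the step I expect to be the main obstacle, is the closed‑form evaluation of the ``diagonal'' series $C_\delta$. Summing over all $\delta\in\mathbb Z$ (the defining formula still makes sense for $\delta<0$, where it equals $\widetilde C_{-\delta}$) telescopes to
\[
\sum_{\delta\in\mathbb Z}C_\delta(x,y)\,z^\delta=\frac{-z}{xz^2-(1-x-y)z+y},
\]
and factoring the denominator over $\mathbb Q[[x,y]]$ as $x(z-\widetilde w)(z-1/w)$, with
\[
w=\frac{1-x-y-\sqrt{\Delta}}{2y},\quad\widetilde w=\frac{1-x-y-\sqrt{\Delta}}{2x},\quad\Delta:=(1-x-y)^2-4xy=x^2-2xy+y^2-2x-2y+1,
\]
a partial‑fraction expansion in the annulus $|\widetilde w|<|z|<|1/w|$ gives $C_\delta=w^\delta/\sqrt{\Delta}$ for $\delta\ge0$, hence $\widetilde C_\delta=\widetilde w^{\,\delta}/\sqrt{\Delta}$. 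Substituting and summing via $\sum_{\delta\ge1}\delta t^\delta=t/(1-t)^2$ yields $\Phi(x,y)=\Delta^{-1}\cdot\dfrac{w\widetilde w}{(1-w\widetilde w)^2}$. Finally, the identity $(1-x-y-\sqrt{\Delta})(1-x-y+\sqrt{\Delta})=4xy$ simplifies $w\widetilde w=\dfrac{(1-x-y-\sqrt{\Delta})^2}{4xy}$ and $1-w\widetilde w=\dfrac{\sqrt{\Delta}\,(1-x-y-\sqrt{\Delta})}{2xy}$, so that $\dfrac{w\widetilde w}{(1-w\widetilde w)^2}=\dfrac{xy}{\Delta}$ and therefore $2\,\Phi=\dfrac{2xy}{\Delta^2}$, which is exactly \eqref{eq:total_geo}.

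Everything preceding the evaluation of $C_\delta$ is bookkeeping; the two ideas that make the reduction go through are the marked level $\ell$, which removes the absolute value, and the shift parameter $\delta$, which decouples the two halves of $(p,q)$ even though $p$ and $q$ must carry the same content. (As a running sanity check one can confirm $\d(\P_{1,1})=2$, $\d(\P_{1,2})=8$, $\d(\P_{2,2})=56$ against the claimed series. An alternative approach would expand $\d(\P_{m,k})=\sum_c\#\{(I,I'):c\in I\vartriangle I'\}$ cell by cell, using the symmetry of the Gaussian binomial coefficient to evaluate $\sum_I|I|=\tfrac{mk}{2}\binom{m+k}{m}$; but it still leaves $\sum_c(\#\{I:c\in I\})^2$, which reintroduces the same asymmetry and leads back to the functions $C_\delta$.)
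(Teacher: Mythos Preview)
Your proof is correct, and it takes a genuinely different route from the paper's. The paper encodes a pair $(p,q)$ as a single bilateral bicolored Motzkin path $A(p,q)$ so that $\d(p,q)$ becomes the area under $A(p,q)$, and then builds up four generating functions $\M,\Mm,\Mb,\Mbm$ via first-return decompositions; the target series is $\Mbm(x,y/x)$, obtained only after solving a chain of functional equations whose intermediate solutions are algebraic but not rational. Your argument instead linearizes the absolute value by inserting a marked level $\ell$, cuts both words at the marked position $i$, and observes that the two halves interact only through the offset $\delta=u_i(q)-u_i(p)$; this reduces everything to the single family $C_\delta(x,y)=\sum_{a,b}\binom{a+b}{a}\binom{a+b}{a-\delta}x^ay^b$, evaluated by the kernel method as $w^\delta/\sqrt\Delta$.

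Each approach has its merits. The paper's Motzkin encoding transports verbatim to the shifted-staircase case and to higher moments (Sections~5--8), and it exposes the curious identity $\Mbm=2ux^2\Mb^4$ discussed in the open problems. Your approach is shorter and more self-contained (no auxiliary generating functions), and it makes the rationality of the answer transparent: the identity $w\widetilde w/(1-w\widetilde w)^2=xy/\Delta$ is exactly where the square roots cancel, whereas in the paper this cancellation is spread across the chain $\M\to\Mm\to\Mb\to\Mbm$. The functions $C_\delta$ are also classical (they are the Narayana/Delannoy-type diagonals of $\Mb(x,u)$), so your computation connects the problem to known machinery.
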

This theorem is obtained via classical first return decomposition for lattice paths.
The fact that this generating series is rational comes as a surprise,
since several intermediate computation steps involve algebraic non-rational functions.
Extracting the coefficient of $x^m y^k$ from~\Cref{eq:total_geo}, we obtain a formula for $\d(\P_{m,k})$.
\begin{corollary}\label{cor:coeff}
      The Wiener index of $\P_{m,k}$ is
      \[\d(\P_{m,k})=\frac{mk}{4m+4k+2}\binom{2m+2k+2}{2k+1}.\]
\end{corollary}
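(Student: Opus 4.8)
The plan is to extract the coefficient of $x^m y^k$ from~\Cref{eq:total_geo} by passing to formal square roots of the variables, which factors the bivariate denominator into univariate pieces. Write $Q := x^2 - 2xy + y^2 - 2x - 2y + 1 = (1-x-y)^2 - 4xy$, and regard $\tfrac{2xy}{Q^2}$ as a formal power series in $x,y$ (legitimate since $Q$ has constant term $1$). Introduce new indeterminates $a,b$ with $a^2 = x$, $b^2 = y$, and set $u := a+b$, $v := a-b$. Then $Q = \bigl(1-(a+b)^2\bigr)\bigl(1-(a-b)^2\bigr) = (1-u^2)(1-v^2)$ and $2xy = 2a^2b^2 = \tfrac18(4ab)^2 = \tfrac18(u^2-v^2)^2$, so that $\d(\P_{m,k})=[x^m y^k]\tfrac{2xy}{Q^2}$ equals the coefficient of $a^{2m}b^{2k}$ in
\begin{equation*}
\frac{2a^2b^2}{(1-u^2)^2(1-v^2)^2} \;=\; \frac{(u^2-v^2)^2}{8(1-u^2)^2(1-v^2)^2} \;=\; \frac18\left(\frac{1}{1-u^2}-\frac{1}{1-v^2}\right)^{\!2} \;=\; \frac18\left[\frac{1}{(1-u^2)^2} - \frac{2}{(1-u^2)(1-v^2)} + \frac{1}{(1-v^2)^2}\right],
\end{equation*}
where the last equality uses $u^2 - v^2 = (1-v^2)-(1-u^2)$.

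Next I would read off the coefficient of $a^{2m}b^{2k}$ from each of the three summands, writing $n := m+k$. From $\tfrac{1}{(1-u^2)^2} = \sum_{i\ge0}(i+1)u^{2i}$ and $u^{2i} = \sum_r \binom{2i}{r}a^rb^{2i-r}$, only $i=n$ contributes, giving $(n+1)\binom{2n}{2m}$; the term $\tfrac{1}{(1-v^2)^2}$ contributes the same value (the relevant sign is $(-1)^{2k}=1$). For the cross term, $\tfrac{1}{(1-u^2)(1-v^2)} = \sum_{i,j\ge0}u^{2i}v^{2j}$, and since $u^{2i}v^{2j}$ is homogeneous of degree $2(i+j)$, only $i+j=n$ matters; the geometric-series identity $\sum_{i+j=n}A^iB^j=\tfrac{A^{n+1}-B^{n+1}}{A-B}$ with $A=u^2$, $B=v^2$, $A-B=4ab$ yields
\begin{equation*}
\sum_{i+j=n}u^{2i}v^{2j} \;=\; \frac{(a+b)^{2n+2}-(a-b)^{2n+2}}{4ab},
\end{equation*}
whose $a^{2m}b^{2k}$-coefficient is $\tfrac14$ times the $a^{2m+1}b^{2k+1}$-coefficient of $(a+b)^{2n+2}-(a-b)^{2n+2}$, that is, $\tfrac14\cdot2\binom{2n+2}{2m+1}=\tfrac12\binom{2n+2}{2m+1}$. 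Collecting the three contributions,
\begin{equation*}
\d(\P_{m,k}) \;=\; \frac18\left[\,2(m+k+1)\binom{2m+2k}{2m} - \binom{2m+2k+2}{2m+1}\,\right].
\end{equation*}

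Finally I would check that this agrees with the claimed expression. Using $\binom{2m+2k+2}{2m+1}=\tfrac{(2m+2k+2)(2m+2k+1)}{(2m+1)(2k+1)}\binom{2m+2k}{2m}$ together with the elementary identity $(2m+1)(2k+1)=4mk+2(m+k)+1$, a one-line calculation shows that, after clearing denominators, both $8\,\d(\P_{m,k})$ and $8\cdot\tfrac{mk}{4m+4k+2}\binom{2m+2k+2}{2k+1}$ reduce to $2(m+k+1)\binom{2m+2k}{2m}$ (here one also uses $\binom{2m+2k+2}{2m+1}=\binom{2m+2k+2}{2k+1}$). The main --- and only mild --- obstacle is the bookkeeping in the coefficient extractions, in particular making sure that the odd-degree monomials in $a,b$ that appear individually in the expansions of $(1-u^2)^{-2}$ and $(1-v^2)^{-2}$ cancel in the combination; this is automatic, since the displayed series equals $\tfrac{2xy}{Q^2}$, which involves only $x=a^2$ and $y=b^2$. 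The concluding binomial manipulation is routine.
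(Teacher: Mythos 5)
Your proof is correct. It takes the same starting point as the paper (extracting $[x^my^k]$ from \eqref{eq:total_geo} after the substitution $x=a^2$, $y=b^2$, which factors the quartic denominator), but the execution is genuinely different and cleaner: where the paper factors $Q$ into the four linear pieces $(q\pm t\pm 1)$ and relies on a computer algebra system for an eight-term partial fraction decomposition \eqref{eq:qt}, followed by two successive coefficient extractions \eqref{eq:qt2}--\eqref{eq:qt3}, you pass to $u=a+b$, $v=a-b$ so that $Q=(1-u^2)(1-v^2)$ and $2xy=\tfrac18(u^2-v^2)^2$, whence $\tfrac{2xy}{Q^2}=\tfrac18\bigl(\tfrac{1}{1-u^2}-\tfrac{1}{1-v^2}\bigr)^2$. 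This three-term expansion makes every coefficient extraction an elementary binomial computation doable by hand; the cross term via $\sum_{i+j=n}A^iB^j=\tfrac{A^{n+1}-B^{n+1}}{A-B}$ is particularly neat. I verified the intermediate formula $\d(\P_{m,k})=\tfrac18\bigl[2(m{+}k{+}1)\binom{2m+2k}{2m}-\binom{2m+2k+2}{2m+1}\bigr]$ (it gives $56$ for $m=k=2$, matching \Cref{fig:2x2}) and the final reduction using $(2m{+}1)(2k{+}1)-(2n{+}1)=4mk$; both are right, and your remark that the parity bookkeeping is automatic is also correct, since one only ever reads off the coefficient of $a^{2m}b^{2k}$ in the full (even-even supported) series. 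The trade-off is essentially aesthetic: the paper's route is mechanical and generalizes to any rational function of this shape, while yours exploits the specific structure of $Q$ as a difference of squares to stay entirely computer-free.
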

 
For fixed $\alpha$, we obtain the asymptotic expected value of $\d(p,q)$ in a $(\alpha n) \times n$ rectangle as $n \to \infty$.
To keep notation simple, we assume throughout the paper that $\alpha n$ is an integer
(otherwise, it suffices to replace $\alpha n$
by its integer value).
\begin{corollary}\label{cor:asymptotic}
     We have \[\frac{\d(\P_{\alpha n,n})}{|\P_{\alpha n,n}|^2} \sim \frac{\sqrt{\pi \alpha (1+\alpha)}}{4} n^{3/2} \text{ as } n \to \infty.\]
\end{corollary}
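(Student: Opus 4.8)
The plan is to derive the estimate directly from the exact formula of \Cref{cor:coeff}, together with the elementary count $|\P_{m,k}| = \binom{m+k}{k}$ (an order ideal of $[m]\times[k]$ is one of the $U/D$ lattice paths described above, of which there are exactly $\binom{m+k}{k}$, one for each choice of the $k$ down-steps). Writing $a = \alpha n$ and $b = n$, the task is to understand
\[
\frac{\d(\P_{\alpha n,n})}{|\P_{\alpha n,n}|^2} = \frac{ab}{4a+4b+2}\cdot\frac{\binom{2a+2b+2}{2b+1}}{\binom{a+b}{b}^2}
\]
as $n\to\infty$. The rational prefactor $\frac{ab}{4a+4b+2}$ satisfies $\frac{ab}{4a+4b+2}\sim\frac{\alpha}{4(1+\alpha)}\,n$, so everything comes down to the ratio of binomial coefficients.

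First I would compare $\binom{2a+2b}{2b}$ with $\binom{a+b}{b}^2$ using Stirling's formula. The key point is that the purely exponential factors cancel exactly, since $(2a+2b)^{2a+2b}/\big((2a)^{2a}(2b)^{2b}\big) = (a+b)^{2a+2b}/(a^{2a}b^{2b})$, which is precisely the square of the exponential factor of $\binom{a+b}{b}$. Thus only the polynomial Stirling corrections survive, and a short computation gives
\[
\frac{\binom{2a+2b}{2b}}{\binom{a+b}{b}^2}\sim\sqrt{\frac{\pi ab}{a+b}}.
\]
Here one must keep careful track of the $\sqrt{2}$'s produced by the two applications of Stirling in the denominator against the single application in the numerator, as these are exactly what pins down the constant.

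Next I would pass from $\binom{2a+2b}{2b}$ to the shifted coefficient $\binom{2a+2b+2}{2b+1}$ appearing in \Cref{cor:coeff}, by writing it as $\binom{2a+2b}{2b}$ times the explicit ratio $\frac{(2a+2b+1)(2a+2b+2)}{(2a+1)(2b+1)}$, which converges to $\frac{(1+\alpha)^2}{\alpha}$. Combining, $\binom{2a+2b+2}{2b+1}\big/\binom{a+b}{b}^2\sim\frac{(1+\alpha)^2}{\alpha}\sqrt{\pi ab/(a+b)}$. Substituting $a=\alpha n$, $b=n$ (so $ab/(a+b)=\alpha n/(1+\alpha)$), multiplying by the prefactor asymptotics, and simplifying then yields $\frac{n}{4}\sqrt{\pi\alpha n(1+\alpha)} = \frac{\sqrt{\pi\alpha(1+\alpha)}}{4}\,n^{3/2}$, which is the claim.

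There is no genuine conceptual obstacle here: the corollary is a routine, if slightly delicate, Stirling computation off the closed form of \Cref{cor:coeff}. The only place that really demands care is the bookkeeping of constants — the $\sqrt{2}$ factors in Stirling and the $O(1)$ correction coming from the shifts $2m+2k\mapsto 2m+2k+2$ and $2k\mapsto 2k+1$ — so that the final constant emerges as exactly $\sqrt{\pi\alpha(1+\alpha)}/4$ rather than merely up to an unspecified factor.
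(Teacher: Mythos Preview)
Your proposal is correct and follows the same route as the paper, which simply remarks that \Cref{cor:asymptotic} is ``routine using Stirling's asymptotic equivalent for factorials'' from the exact expression in \Cref{cor:coeff}. Your write-up in fact supplies the details the paper omits, and the constant bookkeeping you flag (the $\sqrt{2}$'s from Stirling and the $(2a+2b)\mapsto(2a+2b+2)$, $2b\mapsto 2b+1$ shifts) is handled correctly.
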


In~\Cref{sec:asymptotics}, we also describe in this regime the asymptotic distribution of the distance $D_{\alpha,n}$ between
two independent uniform random elements of $P_{\alpha n,n}$.
\begin{proposition}
\label{prop:cv_law_distance}
The random variable $n^{-3/2} D_{\alpha, n}$ converges in distribution
and in moments to $\sqrt{2 \alpha (1+\alpha)} \cdot \int_0^1 |B_0(t)| dt$,
where $B_0(t)$ is a Brownian bridge on $[0,1]$.
\end{proposition}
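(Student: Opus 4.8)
The plan is to express $D_{\alpha,n}$ as (half) the $\ell^1$-norm of the difference of the two height functions of a pair of independent uniform lattice paths and to push a Brownian-bridge invariance principle through the continuous functional $f\mapsto\int_0^1|f|$. First I would identify an order ideal of $[\alpha n]\times[n]$ with its boundary path, i.e.\ with a uniformly random word in the steps $U,D$ having $m:=\alpha n$ up-steps and $k:=n$ down-steps (equivalently, a simple random sample without replacement). Write $N:=m+k=(\alpha+1)n$ and let $S_i$ denote the number of $U$'s among the first $i$ letters, so the height after $i$ steps is $p_i=2S_i-i$. By~\eqref{eq:distance},
\[
      D_{\alpha,n}=\tfrac12\sum_{i=1}^{N}|q_i-p_i|=\tfrac12\sum_{i=1}^{N}|\bar q_i-\bar p_i|,\qquad \bar p_i:=p_i-\tfrac{i(m-k)}{N}=2\Bigl(S_i-\tfrac{im}{N}\Bigr),
\]
the middle equality holding because the deterministic drift cancels in $\bar q_i-\bar p_i=q_i-p_i$. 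Letting $W_n$ be the piecewise-constant rescaled process $W_n(t):=N^{-1/2}(\bar q_{\lfloor Nt\rfloor}-\bar p_{\lfloor Nt\rfloor})$ and using that $\bar q_i-\bar p_i$ vanishes at $i=0$ and $i=N$, one gets the exact identity $2N^{-3/2}D_{\alpha,n}=\int_0^1|W_n(t)|\,dt$.

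The main input is the classical functional central limit theorem for sampling without replacement: $t\mapsto N^{-1/2}(S_{\lfloor Nt\rfloor}-\tfrac{\lfloor Nt\rfloor}{N}m)$ converges in distribution in the Skorokhod space $D[0,1]$ to $\sigma B_0$, where $B_0$ is a standard Brownian bridge and $\sigma^2=\tfrac mN\cdot\tfrac kN=\tfrac{\alpha}{(\alpha+1)^2}$ (the factor $\tfrac{N-i}{N-1}$ in the hypergeometric variance accounts for the bridge shape $t(1-t)$). Hence $N^{-1/2}\bar p_{\lfloor Nt\rfloor}\to\tfrac{2\sqrt\alpha}{\alpha+1}B_0$; since $p$ and $q$ are independent, the pair converges jointly to two independent bridges, and because the difference of two independent standard Brownian bridges equals $\sqrt2\,B_0$ in law, $W_n\to\tfrac{2\sqrt{2\alpha}}{\alpha+1}B_0$ in $D[0,1]$. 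The functional $f\mapsto\int_0^1|f(t)|\,dt$ is continuous at every continuous $f$ for the Skorokhod topology, so the continuous mapping theorem and the exact identity above give $2N^{-3/2}D_{\alpha,n}\xrightarrow{d}\tfrac{2\sqrt{2\alpha}}{\alpha+1}\int_0^1|B_0(t)|\,dt$; substituting $N=(\alpha+1)n$ turns the prefactor into $\sqrt{2\alpha(1+\alpha)}$, which is the asserted distributional limit. As a consistency check, $\E\!\int_0^1|B_0|=\tfrac{\sqrt{2\pi}}8$, which recovers the constant in~\Cref{cor:asymptotic}.

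To upgrade this to convergence of all moments it suffices to show $\sup_n\E\bigl[(n^{-3/2}D_{\alpha,n})^r\bigr]<\infty$ for every integer $r\ge1$, since convergence in distribution together with uniform integrability of the $r$-th powers forces convergence of the $r$-th moments. I would obtain this by expanding $D_{\alpha,n}^r=2^{-r}\sum_{i_1,\dots,i_r=1}^{N}\prod_{j=1}^r|q_{i_j}-p_{i_j}|$, applying Hölder's inequality to reduce to the uniform bound $\E[|q_i-p_i|^r]=O(N^{r/2})$, and deriving the latter from Hoeffding's sub-Gaussian tail bound for sums obtained by sampling without replacement (alternatively from the explicit higher-moment formulas developed in~\Cref{sec:asymptotics}); summing the $N^r$ terms then yields $\E[D_{\alpha,n}^r]=O(N^{3r/2})=O(n^{3r/2})$.

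The step I expect to be most delicate is this last one: controlling the moments of $D_{\alpha,n}$ uniformly in $n$, which genuinely requires a concentration estimate (or the exact moment machinery) rather than the crude bound $D_{\alpha,n}\le N^2/2$. By contrast, the distributional statement is a fairly routine Donsker-plus-continuous-mapping argument, provided one is careful to track the variance constant $\tfrac{\alpha}{(\alpha+1)^2}$, the $\sqrt2$ coming from the difference of two independent bridges, and the substitution $N=(\alpha+1)n$.
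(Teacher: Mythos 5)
Your argument is correct and follows the same overall strategy as the paper: an invariance principle for the rescaled height functions toward independent Brownian bridges, the continuous mapping theorem applied to $f\mapsto\int_0^1|f|$, the identity $B_0-B_0'\stackrel{d}{=}\sqrt2\,B_0$, and uniform moment bounds to upgrade distributional convergence to convergence of moments. The two technical inputs differ, though, and in an interesting way. For the invariance principle, you model the path as a uniformly random arrangement of $\alpha n$ up-steps and $n$ down-steps and invoke the functional CLT for sampling without replacement (with $\sigma^2=pq=\alpha/(\alpha+1)^2$ and the factor $2$ from $p_i=2S_i-i$), whereas the paper models it as an i.i.d.\ Rademacher walk conditioned on its endpoint and cites Liggett's conditioned Donsker theorem; these are two descriptions of the same random object and both yield the limit $\tfrac{2\sqrt{\alpha}}{\alpha+1}B_0$. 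For the moment bounds, your route through Hoeffding's convex-order domination of sampling without replacement by i.i.d.\ sampling gives $\E[|\bar p^n_i|^r]=O(N^{r/2})$ in one stroke and is arguably cleaner than the paper's argument, which conditions on the endpoint, controls the Radon--Nikodym factor uniformly in $k$ via the local limit theorem, and then applies Petrov's moment inequality for centered i.i.d.\ sums. (Your reduction from the $r$-fold sum to single-index moments via generalized H\"older plays the same role as the paper's convexity/Jensen step.) All constants check out: $W_n\to\tfrac{2\sqrt{2\alpha}}{\alpha+1}B_0$ and the substitution $N=(\alpha+1)n$ indeed produce the prefactor $\sqrt{2\alpha(1+\alpha)}$.
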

Informally, a Brownian bridge on $[0,1]$ is a Brownian motion conditioned to have value $0$ at time $1$.
Alternatively,
if $B$ is a Brownian motion, then $B_0(t):=B(t)-tB(1)$ is a Brownian bridge.
Brownian bridges have been extensively studied in the probabilistic litterature.
In particular, much is known on the random variable $\int_0^1 |B_0(t)| dt$;
see \cite[Section 20]{janson2007area} for a survey of results including numerous references. 
In particular, a table of the first few moments can be found in  \cite[Table 2]{janson2007area}.
We copy here the first three:
\[ \mathbb E\left[\int_0^1 |B_0(t)| dt\right] = \frac14 \sqrt{\frac{\pi}2},
\quad \mathbb E\left[\left(\int_0^1 |B_0(t)| dt\right)^2\right] = \frac7{60},
\quad \mathbb E\left[\left(\int_0^1 |B_0(t)| dt\right)^3\right] = \frac{21}{512} \sqrt{\frac{\pi}2}.\]
Together with \cref{prop:cv_law_distance}, this implies
\begin{align*}
\frac1{|\P_{\alpha n,n}|^2} \sum_{p,q \in P_{\alpha n,n}} d(p,q) =\mathbb E[D_{\alpha,n}] &\sim \frac{\sqrt{\pi \alpha (1+\alpha)}}{4} n^{3/2},\\
\frac1{|\P_{\alpha n,n}|^2} \sum_{p,q \in P_{\alpha n,n}} d(p,q)^2 =\mathbb E[D_{\alpha,n}^2]&\sim \frac{7}{30} \alpha (1+\alpha) n^{3},\\
\frac1{|\P_{\alpha n,n}|^2} \sum_{p,q \in P_{\alpha n,n}} d(p,q)^3 =\mathbb E[D_{\alpha,n}^3]&\sim \frac{21}{256} \sqrt{\pi} \alpha^{3/2} (1+\alpha)^{3/2} n^{9/2}.
\end{align*}
Note that the first estimate is nothing but \cref{cor:asymptotic}. This gives a second derivation
of this asymptotic result, which does not go through the exact expression.
Exact expressions for such higher moments can also be obtained through combinatorial means,
see \cref{sec:higher_moments} for a derivation of the second moment.

\subsection{Shifted staircases}
In~\Cref{sec:bij_stair,sec:proof_stair}, we consider the Wiener index of  $\SS_n$, the distributive lattice of order ideals in the $n$th \defn{shifted staircase} poset. Explicitly, $\SS_n$ is the set of order ideals in the poset $\{(i,j):1\leq i\leq j\leq n\}$ under componentwise ordering. The following results determine $\d(\SS_n)$ exactly.

The elements of $\SS_n$ can be represented
as lattice paths starting at $(0,0)$,
ending somewhere on the line $x=n$, and using steps 
of the form $U=(1,1)$ and $D=(1,-1)$. 
In particular, $|\SS_n|=2^n$.
Similarly as for rectangles,
if $p$ and $q$ are elements in $\SS_n$,
writing $p_i,q_i$ for the heights of $p$ and $q$ after the ends of their $i$th steps, we have
\begin{equation}
\label{eq:distance_shifted}
d(p,q) = \frac12 \sum_{i=1}^n \left|\frac{q_i-p_i}{2}\right|.
\end{equation}

\begin{example}
    The graph  $G(\SS_{3})$ is plotted in \Cref{fig:ss3}. Its Wiener index is
      \begin{align*}
            140 = \frac{6\cdot 7}{3}\binom{5}{3}=&24+18+14+14+14+14+18+24.
      \end{align*}
\end{example}

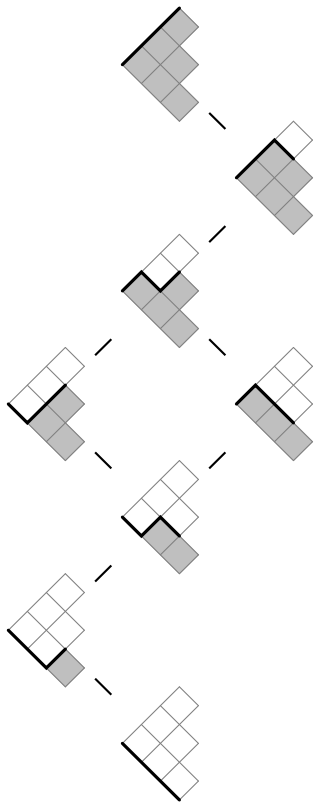
\begin{figure}[htbp]
\begin{tikzpicture}[scale=1.5]
      \node (0) at (0,0) {
            \begin{tikzpicture}[scale=.25]
                  \node[inner sep=0pt] (a) at (0,0) {};
                  \node[inner sep=0pt] (b) at (1,1) {};
                  \node[inner sep=0pt] (c) at (2,2) {};
                  \node[inner sep=0pt] (d) at (3,3) {};
                  \node[inner sep=0pt] (e) at (4,4) {};
                  \node[inner sep=0pt] (f) at (1,-1) {};
                  \node[inner sep=0pt] (g) at (2,0) {};
                  \node[inner sep=0pt] (h) at (3,1) {};
                  \node[inner sep=0pt] (i) at (4,2) {};
                  \node[inner sep=0pt] (j) at (2,-2) {};
                  \node[inner sep=0pt] (k) at (3,-1) {};
                  \node[inner sep=0pt] (l) at (4,0) {};
                  \node[inner sep=0pt] (m) at (3,-3) {};
                  \node[inner sep=0pt] (n) at (4,-2) {};
                  \node[inner sep=0pt] (o) at (4,-4) {};
                  \draw[gray,line cap=round] (a.center) to (b.center) to (c.center) to (d.center);
                  \draw[gray,line cap=round] (f.center) to (g.center) to (h.center) to (i.center);
                  \draw[gray,line cap=round] (j.center) to (k.center) to (l.center);
                  \draw[gray,line cap=round] (m.center) to (n.center);
                  \draw[gray,line cap=round] (a.center) to (f.center) to (j.center) to (m.center);
                  \draw[gray,line cap=round] (b.center) to (g.center) to (k.center) to (n.center);
                  \draw[gray,line cap=round] (c.center) to (h.center) to (l.center);
                  \draw[gray,line cap=round] (d.center) to (i.center);
                  \draw[black,line cap=round,very thick] (a.center) to (f.center) to (j.center) to (m.center);
            \end{tikzpicture}};
    \node (1) at (-1,1) {
            \begin{tikzpicture}[scale=.25]
                  \node[inner sep=0pt] (a) at (0,0) {};
                  \node[inner sep=0pt] (b) at (1,1) {};
                  \node[inner sep=0pt] (c) at (2,2) {};
                  \node[inner sep=0pt] (d) at (3,3) {};
                  \node[inner sep=0pt] (e) at (4,4) {};
                  \node[inner sep=0pt] (f) at (1,-1) {};
                  \node[inner sep=0pt] (g) at (2,0) {};
                  \node[inner sep=0pt] (h) at (3,1) {};
                  \node[inner sep=0pt] (i) at (4,2) {};
                  \node[inner sep=0pt] (j) at (2,-2) {};
                  \node[inner sep=0pt] (k) at (3,-1) {};
                  \node[inner sep=0pt] (l) at (4,0) {};
                  \node[inner sep=0pt] (m) at (3,-3) {};
                  \node[inner sep=0pt] (n) at (4,-2) {};
                  \node[inner sep=0pt] (o) at (4,-4) {};
                  \filldraw[lightgray,fill=lightgray] (j.center) to (k.center) to (n.center) to (m.center) to (j.center);
                  \draw[gray,line cap=round] (a.center) to (b.center) to (c.center) to (d.center);
                  \draw[gray,line cap=round] (f.center) to (g.center) to (h.center) to (i.center);
                  \draw[gray,line cap=round] (j.center) to (k.center) to (l.center);
                  \draw[gray,line cap=round] (m.center) to (n.center);
                  \draw[gray,line cap=round] (a.center) to (f.center) to (j.center) to (m.center);
                  \draw[gray,line cap=round] (b.center) to (g.center) to (k.center) to (n.center);
                  \draw[gray,line cap=round] (c.center) to (h.center) to (l.center);
                  \draw[gray,line cap=round] (d.center) to (i.center);
                  \draw[black,line cap=round,very thick] (a.center) to (f.center) to (j.center) to (k.center);
            \end{tikzpicture}};
    \node (2) at (0,2) {
            \begin{tikzpicture}[scale=.25]
                  \node[inner sep=0pt] (a) at (0,0) {};
                  \node[inner sep=0pt] (b) at (1,1) {};
                  \node[inner sep=0pt] (c) at (2,2) {};
                  \node[inner sep=0pt] (d) at (3,3) {};
                  \node[inner sep=0pt] (e) at (4,4) {};
                  \node[inner sep=0pt] (f) at (1,-1) {};
                  \node[inner sep=0pt] (g) at (2,0) {};
                  \node[inner sep=0pt] (h) at (3,1) {};
                  \node[inner sep=0pt] (i) at (4,2) {};
                  \node[inner sep=0pt] (j) at (2,-2) {};
                  \node[inner sep=0pt] (k) at (3,-1) {};
                  \node[inner sep=0pt] (l) at (4,0) {};
                  \node[inner sep=0pt] (m) at (3,-3) {};
                  \node[inner sep=0pt] (n) at (4,-2) {};
                  \node[inner sep=0pt] (o) at (4,-4) {};
                  \filldraw[lightgray,fill=lightgray] (f.center) to (g.center) to (n.center) to (m.center) to (f.center);
                  \draw[gray,line cap=round] (a.center) to (b.center) to (c.center) to (d.center);
                  \draw[gray,line cap=round] (f.center) to (g.center) to (h.center) to (i.center);
                  \draw[gray,line cap=round] (j.center) to (k.center) to (l.center);
                  \draw[gray,line cap=round] (m.center) to (n.center);
                  \draw[gray,line cap=round] (a.center) to (f.center) to (j.center) to (m.center);
                  \draw[gray,line cap=round] (b.center) to (g.center) to (k.center) to (n.center);
                  \draw[gray,line cap=round] (c.center) to (h.center) to (l.center);
                  \draw[gray,line cap=round] (d.center) to (i.center);
                  \draw[black,line cap=round,very thick] (a.center) to (f.center) to (g.center) to (k.center);
            \end{tikzpicture}};
    \node (3) at (-1,3) {
            \begin{tikzpicture}[scale=.25]
                  \node[inner sep=0pt] (a) at (0,0) {};
                  \node[inner sep=0pt] (b) at (1,1) {};
                  \node[inner sep=0pt] (c) at (2,2) {};
                  \node[inner sep=0pt] (d) at (3,3) {};
                  \node[inner sep=0pt] (e) at (4,4) {};
                  \node[inner sep=0pt] (f) at (1,-1) {};
                  \node[inner sep=0pt] (g) at (2,0) {};
                  \node[inner sep=0pt] (h) at (3,1) {};
                  \node[inner sep=0pt] (i) at (4,2) {};
                  \node[inner sep=0pt] (j) at (2,-2) {};
                  \node[inner sep=0pt] (k) at (3,-1) {};
                  \node[inner sep=0pt] (l) at (4,0) {};
                  \node[inner sep=0pt] (m) at (3,-3) {};
                  \node[inner sep=0pt] (n) at (4,-2) {};
                  \node[inner sep=0pt] (o) at (4,-4) {};
                  \filldraw[lightgray,fill=lightgray] (f.center) to (h.center) to (l.center) to (k.center) to (n.center) to (m.center) to (f.center);
                  \draw[gray,line cap=round] (a.center) to (b.center) to (c.center) to (d.center);
                  \draw[gray,line cap=round] (f.center) to (g.center) to (h.center) to (i.center);
                  \draw[gray,line cap=round] (j.center) to (k.center) to (l.center);
                  \draw[gray,line cap=round] (m.center) to (n.center);
                  \draw[gray,line cap=round] (a.center) to (f.center) to (j.center) to (m.center);
                  \draw[gray,line cap=round] (b.center) to (g.center) to (k.center) to (n.center);
                  \draw[gray,line cap=round] (c.center) to (h.center) to (l.center);
                  \draw[gray,line cap=round] (d.center) to (i.center);
                  \draw[black,line cap=round,very thick] (a.center) to (f.center) to (g.center) to (h.center);
            \end{tikzpicture}};
        \node (4) at (1,3) {
            \begin{tikzpicture}[scale=.25]
                  \node[inner sep=0pt] (a) at (0,0) {};
                  \node[inner sep=0pt] (b) at (1,1) {};
                  \node[inner sep=0pt] (c) at (2,2) {};
                  \node[inner sep=0pt] (d) at (3,3) {};
                  \node[inner sep=0pt] (e) at (4,4) {};
                  \node[inner sep=0pt] (f) at (1,-1) {};
                  \node[inner sep=0pt] (g) at (2,0) {};
                  \node[inner sep=0pt] (h) at (3,1) {};
                  \node[inner sep=0pt] (i) at (4,2) {};
                  \node[inner sep=0pt] (j) at (2,-2) {};
                  \node[inner sep=0pt] (k) at (3,-1) {};
                  \node[inner sep=0pt] (l) at (4,0) {};
                  \node[inner sep=0pt] (m) at (3,-3) {};
                  \node[inner sep=0pt] (n) at (4,-2) {};
                  \node[inner sep=0pt] (o) at (4,-4) {};
                  \filldraw[lightgray,fill=lightgray] (a.center) to (b.center) to (n.center) to (m.center) to (a.center);
                  \draw[gray,line cap=round] (a.center) to (b.center) to (c.center) to (d.center);
                  \draw[gray,line cap=round] (f.center) to (g.center) to (h.center) to (i.center);
                  \draw[gray,line cap=round] (j.center) to (k.center) to (l.center);
                  \draw[gray,line cap=round] (m.center) to (n.center);
                  \draw[gray,line cap=round] (a.center) to (f.center) to (j.center) to (m.center);
                  \draw[gray,line cap=round] (b.center) to (g.center) to (k.center) to (n.center);
                  \draw[gray,line cap=round] (c.center) to (h.center) to (l.center);
                  \draw[gray,line cap=round] (d.center) to (i.center);
                  \draw[black,line cap=round,very thick] (a.center) to (b.center) to (g.center) to (k.center);
            \end{tikzpicture}};
        \node (5) at (0,4) {
            \begin{tikzpicture}[scale=.25]
                  \node[inner sep=0pt] (a) at (0,0) {};
                  \node[inner sep=0pt] (b) at (1,1) {};
                  \node[inner sep=0pt] (c) at (2,2) {};
                  \node[inner sep=0pt] (d) at (3,3) {};
                  \node[inner sep=0pt] (e) at (4,4) {};
                  \node[inner sep=0pt] (f) at (1,-1) {};
                  \node[inner sep=0pt] (g) at (2,0) {};
                  \node[inner sep=0pt] (h) at (3,1) {};
                  \node[inner sep=0pt] (i) at (4,2) {};
                  \node[inner sep=0pt] (j) at (2,-2) {};
                  \node[inner sep=0pt] (k) at (3,-1) {};
                  \node[inner sep=0pt] (l) at (4,0) {};
                  \node[inner sep=0pt] (m) at (3,-3) {};
                  \node[inner sep=0pt] (n) at (4,-2) {};
                  \node[inner sep=0pt] (o) at (4,-4) {};
                  \filldraw[lightgray,fill=lightgray] (a.center) to (b.center) to (g.center) to (h.center) to (l.center) to (k.center) to (n.center) to (m.center) to (a.center);
                  \draw[gray,line cap=round] (a.center) to (b.center) to (c.center) to (d.center);
                  \draw[gray,line cap=round] (f.center) to (g.center) to (h.center) to (i.center);
                  \draw[gray,line cap=round] (j.center) to (k.center) to (l.center);
                  \draw[gray,line cap=round] (m.center) to (n.center);
                  \draw[gray,line cap=round] (a.center) to (f.center) to (j.center) to (m.center);
                  \draw[gray,line cap=round] (b.center) to (g.center) to (k.center) to (n.center);
                  \draw[gray,line cap=round] (c.center) to (h.center) to (l.center);
                  \draw[gray,line cap=round] (d.center) to (i.center);
                  \draw[black,line cap=round,very thick] (a.center) to (b.center) to (g.center) to (h.center);
            \end{tikzpicture}};
        \node (6) at (1,5) {
            \begin{tikzpicture}[scale=.25]
                  \node[inner sep=0pt] (a) at (0,0) {};
                  \node[inner sep=0pt] (b) at (1,1) {};
                  \node[inner sep=0pt] (c) at (2,2) {};
                  \node[inner sep=0pt] (d) at (3,3) {};
                  \node[inner sep=0pt] (e) at (4,4) {};
                  \node[inner sep=0pt] (f) at (1,-1) {};
                  \node[inner sep=0pt] (g) at (2,0) {};
                  \node[inner sep=0pt] (h) at (3,1) {};
                  \node[inner sep=0pt] (i) at (4,2) {};
                  \node[inner sep=0pt] (j) at (2,-2) {};
                  \node[inner sep=0pt] (k) at (3,-1) {};
                  \node[inner sep=0pt] (l) at (4,0) {};
                  \node[inner sep=0pt] (m) at (3,-3) {};ate is nothing but Corollary 1.4. This gives a second derivation of this
asymptotic result, which 
                  \node[inner sep=0pt] (n) at (4,-2) {};
                  \node[inner sep=0pt] (o) at (4,-4) {};
                  \filldraw[lightgray,fill=lightgray] (a.center) to (c.center) to (l.center) to (k.center) to (n.center) to (m.center) to (a.center);
                  \draw[gray,line cap=round] (a.center) to (b.center) to (c.center) to (d.center);
                  \draw[gray,line cap=round] (f.center) to (g.center) to (h.center) to (i.center);
                  \draw[gray,line cap=round] (j.center) to (k.center) to (l.center);
                  \draw[gray,line cap=round] (m.center) to (n.center);
                  \draw[gray,line cap=round] (a.center) to (f.center) to (j.center) to (m.center);
                  \draw[gray,line cap=round] (b.center) to (g.center) to (k.center) to (n.center);
                  \draw[gray,line cap=round] (c.center) to (h.center) to (l.center);
                  \draw[gray,line cap=round] (d.center) to (i.center);
                  \draw[black,line cap=round,very thick] (a.center) to (b.center) to (c.center) to (h.center);
            \end{tikzpicture}};
        \node (7) at (0,6) {
            \begin{tikzpicture}[scale=.25]
                  \node[inner sep=0pt] (a) at (0,0) {};
                  \node[inner sep=0pt] (b) at (1,1) {};
                  \node[inner sep=0pt] (c) at (2,2) {};
                  \node[inner sep=0pt] (d) at (3,3) {};
                  \node[inner sep=0pt] (e) at (4,4) {};
                  \node[inner sep=0pt] (f) at (1,-1) {};
                  \node[inner sep=0pt] (g) at (2,0) {};
                  \node[inner sep=0pt] (h) at (3,1) {};
                  \node[inner sep=0pt] (i) at (4,2) {};
                  \node[inner sep=0pt] (j) at (2,-2) {};
                  \node[inner sep=0pt] (k) at (3,-1) {};
                  \node[inner sep=0pt] (l) at (4,0) {};
                  \node[inner sep=0pt] (m) at (3,-3) {};
                  \node[inner sep=0pt] (n) at (4,-2) {};
                  \node[inner sep=0pt] (o) at (4,-4) {};
                  \filldraw[lightgray,fill=lightgray] (a.center) to (d.center) to (i.center) to (h.center) to (l.center) to (k.center) to (n.center) to (m.center) to (a.center);
                  \draw[gray,line cap=round] (a.center) to (b.center) to (c.center) to (d.center);
                  \draw[gray,line cap=round] (f.center) to (g.center) to (h.center) to (i.center);
                  \draw[gray,line cap=round] (j.center) to (k.center) to (l.center);
                  \draw[gray,line cap=round] (m.center) to (n.center);
                  \draw[gray,line cap=round] (a.center) to (f.center) to (j.center) to (m.center);
                  \draw[gray,line cap=round] (b.center) to (g.center) to (k.center) to (n.center);
                  \draw[gray,line cap=round] (c.center) to (h.center) to (l.center);
                  \draw[gray,line cap=round] (d.center) to (i.center);
                  \draw[black,line cap=round,very thick] (a.center) to (b.center) to (c.center) to (d.center);
            \end{tikzpicture}};
      \draw[thick] (0) to (1) to (2) to (4) to (5) to (6) to (7);
      \draw[thick] (2) to (3) to (5);
\end{tikzpicture}
\caption{The Hasse diagram $G(\SS_{3})$ of order ideals in the third shifted staircase.}
\label{fig:ss3}
\end{figure}

\begin{theorem}\label{thm:gf_SS}
The generating function for the Wiener index of all lattices $\SS_n$ is given by 
 \begin{equation}\label{eq:total_geo_SS}
            \sum_{n=0}^\infty \d(\SS_n)x^n = \frac{8x\left(1+\sqrt{1-4x}-x(3+\sqrt{1-4x})\right)}{(1-4x)(1-4x+\sqrt{1-4x})^3}.
      \end{equation} 
\end{theorem}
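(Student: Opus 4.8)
The plan is to convert the double sum $\d(\SS_n)=\sum_{p,q\in\SS_n}d(p,q)$ into a sum of ``column contributions'' and to recognize the resulting generating function as a Cauchy product of a geometric series with (essentially) the derivative of the central binomial series. Starting from \eqref{eq:distance_shifted} and exchanging the order of summation,
\[
\d(\SS_n)=\sum_{p,q\in\SS_n}\ \sum_{i=1}^{n}\left|\frac{q_i-p_i}{2}\right|=\sum_{i=1}^{n}\ \sum_{p,q\in\SS_n}\left|\frac{q_i-p_i}{2}\right|.
\]
An element of $\SS_n$ is a word of $n$ steps in $\{U,D\}$, the height $p_i$ depends only on the first $i$ of these steps, and any word of length $i$ is the prefix of exactly $2^{\,n-i}$ elements of $\SS_n$. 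Grouping words of length $i$ by their number $a$ (resp.\ $b$) of down-steps, whose endpoint is then $i-2a$ (resp.\ $i-2b$), the $i$-th inner sum above equals $4^{\,n-i}\,T_i$ with $T_i:=\sum_{a,b=0}^{i}\binom{i}{a}\binom{i}{b}\,|a-b|$.

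The crux is the identity $T_i=i\binom{2i}{i}$. I would prove it by sorting the pairs $(a,b)$ according to $r:=|a-b|$ and using the Vandermonde convolution $\sum_b\binom{i}{b+r}\binom{i}{b}=\binom{2i}{i-r}$ to get $T_i=2\sum_{r=1}^{i}r\binom{2i}{i-r}$; reindexing by $m=i-r$ and invoking the symmetry of binomial coefficients (through $\sum_{m=0}^{i-1}\binom{2i}{m}=\tfrac12(4^i-\binom{2i}{i})$ and $\sum_{m=0}^{i-1}m\binom{2i}{m}=i\,2^{2i-1}-i\binom{2i}{i}$) then collapses the last sum to $\tfrac12\binom{2i}{i}$. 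An equivalent shortcut is to cite the classical value $\mathbb E\,|X-Y|=\tfrac{i}{2^{2i-1}}\binom{2i-1}{i}$ for independent $\mathrm{Binomial}(i,\tfrac12)$ variables $X,Y$. Either way we reach
\[
\d(\SS_n)=\sum_{i=1}^{n}4^{\,n-i}\,i\binom{2i}{i},
\]
which one can sanity-check against $\d(\SS_3)=16\cdot2+4\cdot12+60=140$ and $\d(\SS_2)=4\cdot2+12=20$.

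From here the generating function is a Cauchy product of a geometric series with $\sum_{i\ge1}i\binom{2i}{i}x^i$, namely $\sum_{n\ge0}\d(\SS_n)\,x^n=\frac{1}{1-4x}\sum_{i\ge1}i\binom{2i}{i}x^i$. Since $\sum_{i\ge0}\binom{2i}{i}x^i=(1-4x)^{-1/2}$, differentiation gives $\sum_{i\ge1}i\binom{2i}{i}x^i=x\,\frac{d}{dx}(1-4x)^{-1/2}=2x(1-4x)^{-3/2}$, hence
\[
\sum_{n\ge0}\d(\SS_n)\,x^n=\frac{2x}{(1-4x)^{5/2}}.
\]
Finally I would check that this agrees with the right-hand side of \eqref{eq:total_geo_SS}: writing $s=\sqrt{1-4x}$ so that $4x=1-s^2$, the stated expression has denominator $s^2(s^2+s)^3=s^5(1+s)^3$, while $4\bigl(1+s-x(3+s)\bigr)=(1+s)\bigl(4-(1-s)(3+s)\bigr)=(1+s)^3$; so the stated expression equals $\dfrac{2x(1+s)^3}{s^5(1+s)^3}=\dfrac{2x}{s^5}=\dfrac{2x}{(1-4x)^{5/2}}$, completing the proof.

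The only genuinely delicate point is the binomial identity $T_i=i\binom{2i}{i}$, because the absolute value $|a-b|$ blocks a one-line Vandermonde evaluation and forces the telescoping/symmetry argument above (or a search for the right reference on the mean absolute difference of two binomials); the column-by-column factorization, the Cauchy product, and the radical simplification are all routine. As an alternative one could avoid the explicit sum entirely and derive $\sum_{n\ge0}\d(\SS_n)x^n$ via a first-return decomposition of $\pm1$ paths, paralleling the proof of \Cref{thm:gf}, but the route above appears shortest.
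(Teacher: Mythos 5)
Your proof is correct, but it takes a genuinely different route from the paper's. The paper mimics its treatment of rectangles: it encodes pairs $(p,q)\in\SS_n^\times$ as bilateral Motzkin prefixes, performs first-return decompositions, and solves the resulting functional equations for the four series $\mathcal N,\Nn,\mathcal V,\Vv$ (\Cref{prop:N,prop:Nn,prop:V,prop:Vv}), reading the theorem off from the explicit formula for $\Vv$. You instead exchange the order of summation, note that column $i$ contributes $4^{\,n-i}\sum_{a,b}\binom{i}{a}\binom{i}{b}|a-b|$, and evaluate this via the identity $\sum_{a,b}\binom{i}{a}\binom{i}{b}|a-b|=i\binom{2i}{i}$ (the mean absolute difference of two symmetric binomials), obtaining the closed sum $\d(\SS_n)=\sum_{i=1}^n 4^{\,n-i}\,i\binom{2i}{i}$ and hence the compact form $2x(1-4x)^{-5/2}$, which you then verify equals the stated radical expression via the computation $4\bigl(1+s-x(3+s)\bigr)=(1+s)^3$ with $s=\sqrt{1-4x}$. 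All of your intermediate identities check out (I verified the Vandermonde step, the two partial-sum evaluations, and the values $T_1=2$, $T_2=12$, $\d(\SS_2)=20$, $\d(\SS_3)=140$). Your argument is shorter and more elementary, and it exposes a fact the paper never remarks on: the right-hand side of \eqref{eq:total_geo_SS} collapses to $2x(1-4x)^{-5/2}$, from which \Cref{cor:Wiener_JSn} follows by direct coefficient extraction rather than the guess-and-check recurrence of \Cref{sec:proof_stair}. What the paper's heavier machinery buys is uniformity with the rectangle case and reusability of the area-weighted prefix series for higher moments (\Cref{sec:higher_moments}). One small point: you quote \eqref{eq:distance_shifted} but then correctly drop its leading factor $\tfrac12$; as printed, that equation halves twice and would yield non-integer distances (e.g.\ distance $\tfrac32$ between the top and bottom of $\SS_2$), so your version $\d(p,q)=\sum_{i}\bigl|\tfrac{q_i-p_i}{2}\bigr|$ is the right one, as your numerical checks confirm.
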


\begin{corollary}
\label{cor:Wiener_JSn}
The Wiener index of $\SS_n$ is \[\d(\SS_n)=\frac{2n(2n+1)}{3}\binom{2n-1}{n}.\]
Consequently, as $n$ tends to $+\infty$, we have
$d(\SS_n) \sim \frac{2}{3\sqrt \pi} 4^n n^{3/2}$.
\end{corollary}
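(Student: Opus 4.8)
The plan is to read off the result directly from \Cref{thm:gf_SS}. The key observation---and essentially the whole proof---is that the right-hand side of \eqref{eq:total_geo_SS}, though written in a complicated algebraic form, collapses dramatically. First I would substitute $s:=\sqrt{1-4x}$, so that $1-4x=s^2$ and $x=(1-s^2)/4$. Then $1-4x+s=s^2+s=s(s+1)$, so the denominator of \eqref{eq:total_geo_SS} equals $s^2\cdot s^3(s+1)^3=s^5(s+1)^3$. For the numerator I would write $8x=2(1-s^2)$ and compute
\[
1+s-x(3+s)=(1+s)-\frac{(1-s)(1+s)(3+s)}{4}=(1+s)\cdot\frac{4-(1-s)(3+s)}{4}=\frac{(1+s)^3}{4},
\]
using $4-(1-s)(3+s)=1+2s+s^2=(1+s)^2$. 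Hence the numerator is $2(1-s^2)\cdot\frac{(1+s)^3}{4}=\frac{(1-s)(1+s)^4}{2}$, and after cancelling the common factors $(1+s)^3$ we are left with
\[
\sum_{n\ge 0}\d(\SS_n)\,x^n=\frac{(1-s)(1+s)}{2s^5}=\frac{1-s^2}{2s^5}=\frac{2x}{(1-4x)^{5/2}}.
\]

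From this closed form the corollary is routine bookkeeping. Using the binomial series $(1-4x)^{-5/2}=\sum_{n\ge0}\binom{n+3/2}{n}4^nx^n$---or, to avoid half-integer binomial coefficients, differentiating the standard identity $(1-4x)^{-3/2}=\sum_{n\ge0}(2n+1)\binom{2n}{n}x^n$---I would extract $[x^n]\,2x(1-4x)^{-5/2}$, simplify the resulting factorials, and obtain $\d(\SS_n)=\frac{n(2n+1)}{3}\binom{2n}{n}$ for $n\ge1$ (and $0$ for $n=0$, consistent with $\SS_0$ being a single point). The identity $\binom{2n}{n}=2\binom{2n-1}{n}$, valid for $n\ge1$, then rewrites this as $\d(\SS_n)=\frac{2n(2n+1)}{3}\binom{2n-1}{n}$, which is the claimed formula; as a check this gives $\d(\SS_3)=7\binom{6}{3}=140$, matching the worked example.

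The asymptotic statement follows immediately from Stirling's approximation $\binom{2n}{n}\sim 4^n/\sqrt{\pi n}$: indeed $\d(\SS_n)=\frac{n(2n+1)}{3}\binom{2n}{n}\sim\frac{2n^2}{3}\cdot\frac{4^n}{\sqrt{\pi n}}=\frac{2}{3\sqrt{\pi}}\,4^n n^{3/2}$. I do not expect any genuine obstacle here; the only step that demands care is the algebraic simplification of the generating function to $2x(1-4x)^{-5/2}$, and to a lesser extent the factorial manipulation and the treatment of the $n=0$ term, all of which are elementary.
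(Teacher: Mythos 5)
Your proposal is correct, and it reaches the coefficient extraction by a genuinely different (and arguably cleaner) route than the paper. Both arguments start from the generating function of \Cref{thm:gf_SS}, but the paper extracts the coefficients by having a computer algebra system produce a two-term holonomic recurrence for the $a_n$ and then checking that $\frac{2n(2n+1)}{3}\binom{2n-1}{n}$ satisfies that recurrence with the right initial conditions. You instead observe that under the substitution $s=\sqrt{1-4x}$ the right-hand side of \Cref{eq:total_geo_SS} collapses to $\frac{(1-s)(1+s)^4}{2s^5(1+s)^3}=\frac{2x}{(1-4x)^{5/2}}$, after which the coefficients come straight from the binomial series (e.g.\ by differentiating $(1-4x)^{-3/2}=\sum_n(2n+1)\binom{2n}{n}x^n$), giving $\frac{n(2n+1)}{3}\binom{2n}{n}=\frac{2n(2n+1)}{3}\binom{2n-1}{n}$ for $n\ge 1$. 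I checked your algebra ($4-(1-s)(3+s)=(1+s)^2$, the cancellation of $(1+s)^3$, and the $n=3$ sanity check giving $140$) and it is all right. What your approach buys is a fully hand-verifiable proof with no reliance on a CAS, plus the pleasant intermediate identity $\sum_n \d(\SS_n)x^n = 2x(1-4x)^{-5/2}$, which the paper never states explicitly; what the paper's approach buys is robustness (the recurrence-and-verify method works mechanically even when no such simplification is available, as with the more complicated rectangle generating function). The asymptotic statement is handled identically in both: Stirling applied to the central binomial coefficient.
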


In~\Cref{sec:asymptotics}, we turn to the asymptotic distribution of the distance $E_n$ between two random order ideals of 
$\SS_n$.

\begin{proposition}
\label{prop:cv_law_distance_shifted}
The random variable $n^{-3/2} E_n$ converges in distribution
and in moments to $\frac{1}{\sqrt{2}} \cdot \int_0^1 |B(t)| dt$,
where $B(t)$ is a Brownian motion on $[0,1]$.
\end{proposition}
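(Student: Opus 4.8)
The plan is to realize $E_n$ as an integrated functional of a simple random walk and to invoke Donsker's invariance principle, in parallel to the proof of \Cref{prop:cv_law_distance}; the difference is that here the relevant walk is unconstrained, so its scaling limit is a Brownian motion rather than a Brownian bridge. Concretely, since the elements of $\SS_n$ correspond bijectively to words in $\{U,D\}^n$, a uniform random element of $\SS_n$ is a simple symmetric random walk run for $n$ steps from the origin; if $p$ and $q$ are independent uniform elements, then their height sequences $(p_i)_{0\le i\le n}$ and $(q_i)_{0\le i\le n}$ are two independent such walks. Put $S_i:=(q_i-p_i)/2$. Then $(S_i)_{0\le i\le n}$ is a mean-zero random walk whose increments equal $0$ with probability $\tfrac12$ and $\pm1$ each with probability $\tfrac14$, and in particular have variance $\sigma^2=\tfrac12$; and by the distance formula \eqref{eq:distance_shifted} we have $E_n=d(p,q)=\sum_{i=1}^n|S_i|$.

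For convergence in distribution, let $\widehat S_n\in C[0,1]$ be the linear interpolation of the points $(i/n,\,S_i/\sqrt n)$ for $0\le i\le n$. Donsker's theorem gives $\widehat S_n\Rightarrow\sigma B=\tfrac1{\sqrt2}B$ weakly in $C[0,1]$, and since $\omega\mapsto\int_0^1|\omega(t)|\,dt$ is continuous on $C[0,1]$, the continuous mapping theorem yields $\int_0^1|\widehat S_n(t)|\,dt\Rightarrow\tfrac1{\sqrt2}\int_0^1|B(t)|\,dt$. On the other hand, $\int_0^1|\widehat S_n(t)|\,dt$ is within $n^{-1/2}$ of $n^{-3/2}\sum_{i=0}^{n-1}|S_i|$ (because $\widehat S_n$ is uniformly within $n^{-1/2}$ of the corresponding step function, whose increments are bounded by $1/\sqrt n$), which in turn is within $n^{-3/2}|S_n|=O_{\mathbb P}(n^{-1})$ of $n^{-3/2}E_n=n^{-3/2}\sum_{i=1}^n|S_i|$. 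By Slutsky's lemma, $n^{-3/2}E_n\Rightarrow\tfrac1{\sqrt2}\int_0^1|B(t)|\,dt$.

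For convergence of moments, it suffices to prove $\sup_n\E\big[(n^{-3/2}E_n)^k\big]<\infty$ for every $k\ge1$: this makes every power uniformly integrable, and the limit is moment-determinate since $\int_0^1|B(t)|\,dt\le\max_{[0,1]}|B|$ has Gaussian tails, so convergence in distribution then forces convergence of all moments. Expanding, $\E[E_n^k]=\sum_{1\le i_1,\dots,i_k\le n}\E\big[|S_{i_1}|\cdots|S_{i_k}|\big]$; using $|S_i|\le(|p_i|+|q_i|)/2$ together with the classical bound $\E|p_i|^k\le C_k\,i^{k/2}$ for the simple random walk, we get $\E|S_i|^k\le C_k\,i^{k/2}$, so by the generalized H\"older inequality each summand is at most $C_k(i_1\cdots i_k)^{1/2}$. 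Hence $\E[E_n^k]\le C_k\big(\sum_{i=1}^n\sqrt i\big)^k=O(n^{3k/2})$, which is the required bound.

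The heart of the argument is just the functional central limit theorem; the points that need a little care are the (routine) continuity of the integral functional in the path space used---handled above by working in $C[0,1]$ via linear interpolation---and the uniform moment estimate for the ``in moments'' statement, which is where essentially all of the actual work lies. As a sanity check, the case $k=1$ gives $\E[E_n]\sim\tfrac1{\sqrt2}\big(\int_0^1\E|B(t)|\,dt\big)\,n^{3/2}=\tfrac{2}{3\sqrt\pi}\,n^{3/2}$, consistent with $d(\SS_n)=|\SS_n|^2\,\E[E_n]\sim\tfrac{2}{3\sqrt\pi}\,4^n n^{3/2}$ from \Cref{cor:Wiener_JSn}.
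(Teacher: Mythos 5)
Your proof is correct and follows essentially the same route as the paper's: represent the two order ideals as independent $\pm 1$ walks, apply Donsker's theorem together with the continuity of $\omega\mapsto\int_0^1|\omega(t)|\,dt$ to get convergence in distribution, and then establish uniform bounds on all moments of $n^{-3/2}E_n$ to upgrade to convergence of moments. The only differences are cosmetic implementation choices (you combine the two walks into a single lazy walk and work with linear interpolation in $C[0,1]$ rather than in Skorokhod space, and you obtain the moment bounds via H\"older on the expanded sum rather than via Doob's maximal inequality), and your appeal to moment-determinacy of the limit is superfluous, since uniform integrability of each power already suffices.
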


Again, much is known about the random variable 
$\int_0^1 |B(t)| dt$, and a comprehensive literature review appears in~\cite[Section 21]{janson2007area}. In particular, the first few moments are given in~\cite[Table 3]{janson2007area}:
\[ \mathbb E\left[\int_0^1 |B(t)| dt\right] = \frac23 \sqrt{\frac2{\pi}},
\quad \mathbb E\left[\left(\int_0^1 |B(t)| dt\right)^2\right] = \frac3{8},
\quad \mathbb E\left[\left(\int_0^1 |B(t)| dt\right)^3\right] = \frac{263}{630} \sqrt{\frac2{\pi}}.\]
Together with \cref{prop:cv_law_distance_shifted}, this implies
\begin{align*}
\frac1{|\SS_n|^2} \sum_{p,q \in \SS_n} d(p,q) =\mathbb E[E_n]&\sim \frac{2}{3 \sqrt \pi} n^{3/2},\\
\frac1{|\SS_n|^2} \sum_{p,q \in \SS_n} d(p,q)^2 =\mathbb E[E_n^2] &\sim \frac{3}{16} n^{3},\\
\frac1{|\SS_n|^2} \sum_{p,q \in \SS_n} d(p,q)^3  =\mathbb E[E_n^3]&\sim \frac{263}{1260 \sqrt{\pi}}
n^{9/2}.
\end{align*}
Again, recalling that $|\SS_n|=2^n$,
this allows us 
to recover the asymptotic
behaviour of $d(\SS_n)$ given in \cref{cor:Wiener_JSn}
without going through its exact expression.

\subsection{The remaining minuscule lattices}
The Wiener indices of the remaining minuscule lattices are simple calculations.

Let $R_n$ be the $n$th ``double tailed diamond''---that is, the distributive lattice of order ideals in the minuscule poset of type $D_n$ corresponding to the first fundamental weight.  
\begin{theorem}\label{thm:other_mins}
We have 
\[\d(R_n)=\frac{2}{3} (n+3) \left(4 n^2+9 n+8\right).\]  The minuscule lattices of types $E_6$ and $E_7$ have Wiener indices $3584$ and $24048$, respectively.
\end{theorem}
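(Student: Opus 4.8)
The plan is to treat the three posets separately, since each is small enough to handle by an essentially direct computation. For the double-tailed diamond $R_n$, I would first recall its explicit structure: the minuscule poset of type $D_n$ for the first fundamental weight is the ``double-tailed diamond'' $\Lambda_n$, a poset on $2n-2$ elements consisting of a chain of length $n-2$, then an antichain of two incomparable elements, then another chain of length $n-2$ (so its order ideals form a lattice with a long ``spine'' and a single ``diamond'' in the middle). Concretely $R_n = J(\Lambda_n)$ has $2n$ elements: there is a bottom chain of $n-1$ ideals $\hat 0 = I_0 \lessdot I_1 \lessdot \cdots \lessdot I_{n-1}$, then $I_{n-1}$ is covered by two incomparable ideals $A$ and $B$, which are both covered by a single ideal $I_n$, which then sits at the bottom of a top chain $I_n \lessdot I_{n+1} \lessdot \cdots \lessdot I_{2n-2} = \hat 1$. (One should double-check the exact lengths of the tails against the $D_n$ minuscule poset, adjusting the indexing so that $|R_n| = 2n$; this bookkeeping is the only place an off-by-one can creep in.) The Hasse diagram is thus a path of length $2n-2$ with the single middle edge replaced by a $4$-cycle through $A$ and $B$.

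Given this graph, computing $\d(R_n)$ is a matter of summing distances over all ordered pairs. I would split the vertex set into the bottom chain $L = \{I_0,\dots,I_{n-1}\}$, the two ``diamond'' vertices $\{A,B\}$, and the top chain $M = \{I_n,\dots,I_{2n-2}\}$, and observe that $d(A,B)=2$, that for $x\in L$ the distance to $A$ equals the distance to $B$ (namely $\operatorname{dist}(x, I_{n-1})+1$), and similarly for $x \in M$ relative to $I_n$, and that for $x \in L$, $y \in M$ the geodesic passes through $I_{n-1}$, one of $A,B$, and $I_n$, giving $d(x,y)=\operatorname{dist}(x,I_{n-1})+2+\operatorname{dist}(I_n,y)$. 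Each of the resulting pieces is either a Wiener-type sum over a path (closed form $\sum_{0\le i<j\le \ell}(j-i) = \binom{\ell+2}{3}$, doubled for ordered pairs) or a product of two such ``total distance from all chain vertices to an endpoint'' sums $\sum_{i=0}^{\ell}i = \binom{\ell+1}{2}$; assembling them gives a polynomial in $n$ which I expect to simplify to $\tfrac23(n+3)(4n^2+9n+8)$. I would sanity-check the formula at $n=2,3,4$ by hand (e.g. $n=2$ gives the square $\P_{2,2}$-like diamond on $4$ vertices? — no, $R_2$ has $4$ elements and is itself a $4$-cycle, with Wiener index $16$, and indeed $\tfrac23\cdot 5\cdot(16+18+8)=\tfrac23\cdot 5\cdot 30 = 100$... so the indexing must differ; this discrepancy is exactly the off-by-one to pin down, presumably $R_n$ has the diamond plus tails of length $n-2$ on each side making $|R_n|=2n$, and the base cases will fix the convention).

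For the two exceptional lattices, there is no free parameter, so the claim reduces to two finite computations: the $E_6$ minuscule lattice has $27$ elements and the $E_7$ minuscule lattice has $56$ elements. I would describe these explicitly — e.g. the $E_6$ minuscule poset is the well-known poset on $16$ elements whose order ideals number $27$, and the $E_7$ one has $27$ elements with $56$ order ideals — build the Hasse diagram, run an all-pairs shortest path (or simply cite the computation), and read off the Wiener indices $3584$ and $24048$. Since these are stated as ``simple calculations,'' the write-up can legitimately present the posets, note that the distance in a distributive lattice is $|p \vartriangle q|$ as recalled in the introduction, and either exhibit the Hasse diagrams with enough structure to sum distances by hand or explicitly defer to a computer verification. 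The main obstacle, such as it is, is purely organizational: getting the combinatorial description of the $D_n$ minuscule poset and its order-ideal lattice exactly right (the tail lengths and the resulting indexing) so that the polynomial algebra in the first part lands on the advertised formula; once the structure of the Hasse diagram is fixed, everything else is a routine distance count.
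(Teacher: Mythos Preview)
Your proposal is correct and matches the paper's approach exactly: the paper itself says only that ``the proof in the case of $R_n$ is elementary and left to the reader'' and that ``the cases of $E_6$ and $E_7$ are treated by computer,'' so your plan to split the Hasse diagram of $R_n$ into two tails and the central $4$-cycle and sum distances, together with a computer check for the exceptional types, is precisely what is intended.

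Two small things to fix when you carry it out. First, an arithmetic slip in your sanity check: $16+18+8=42$, so the formula at $n=2$ gives $\tfrac{2}{3}\cdot 5\cdot 42=140$, not $100$. Second, and relatedly, your suspicion about the indexing is right but your guess for the fix is not: the lattice you describe (two tails of $n-1$ vertices each glued to a $4$-cycle, for $2n$ vertices total) has Wiener index $56$ when it has $6$ vertices and $140$ when it has $8$ vertices, which match the formula at $n=1$ and $n=2$ respectively. So the $R_n$ in the stated formula has $2n+4$ vertices (tails of length $n$ on each side of the diamond, i.e.\ type $D_{n+2}$), not $2n$. Once you adopt that convention, your decomposition into path-sums of the form $\sum_{0\le i<j\le \ell}(j-i)$ and endpoint-sums $\sum_{i=0}^{\ell} i$ goes through mechanically and lands on the stated cubic.
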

The proof in the case of $R_n$ is elementary and left to the reader.
The cases of $E_6$ and $E_7$ are treated by computer.
The expressions for Wiener indices given in \Cref{cor:coeff}, \Cref{cor:Wiener_JSn}, and \Cref{thm:other_mins} suggest that there may be a uniform formula for $\d(P)$ for $P$ a minuscule lattice---but we regrettably have been unable to find such an expression.

\section{Lattice path bijections}\label{sec:rect_bijections}

We continue to use the steps $U=(1,1)$ and $D=(1,-1)$, but we will also make use of two versions (or colors) of the step $(1,0)$, denoted $O_1$ and $O_2$.  For $(p,q) \in \P_{k,n-k} \times \P_{k,n-k}$, define a lattice path $\bija(p,q)$ using the following dictionary between the $i$th pair of steps in $(p, q)$ and the $i$th step in $\bija(p,q)$:
      \begin{equation}\label{eq:bija}
            \begin{array}{|c|c|c|c|} \hline
                  (p,q) & A(p,q) & \left|\frac{q_{i+1}-p_{i+1}}{2}\right|-\left|\frac{q_i-p_i}{2}\right| & r_{i+1}-r_i\\ \hline
                  (D,U) & U & +1 & +1\\
                  (U,D) & D & -1 & -1\\
                  (U,U) & O_1 & 0 & 0\\
                  (D,D) & O_2 & 0 & 0 \\\hline
            \end{array}.
      \end{equation}
      
Two examples of this bijection are illustrated in~\Cref{fig:bija}.  Given a lattice path $r$ of length $n$ with steps from the set $\{U,D,O_1,O_2\}$, write $r_i$ for the height (i.e.\, the $y$-coordinate) of $r$ at the end of its $i$th step. The unsigned area between $r$ and the $x$-axis is $\d(r)=|r_0|+|r_1|+|r_2|+\cdots+|r_{n-1}|+\frac{1}{2}|r_n|$. Let us also write $\overline\d(r)=|r_0|+|r_1|+|r_2|+\cdots+|r_{n}|=\d(r)+\frac{1}{2}|r_n|$.  Then, comparing with~\Cref{eq:distance}, it is clear that $\d(p,q) = \d(\bija(p,q))$: certainly $\frac{q_0-p_0}{2}=0=r_0$, so suppose that $\frac{q_i-p_i}{2} = r_i$; then the difference in height at the $(i+1)$st step in $\bija(p,q)$ matches the difference in height at the $(i+1)$st steps of $p$ and $q$, as shown in the rightmost two columns of~\eqref{eq:bija}.

\begin{figure}[htbp]
\[\raisebox{-.5\height}{
\begin{tikzpicture}[scale=.25]
\draw[gray] (0,0) to (6,6) to (12,0) to (6,-6) to (0,0);
\filldraw[lightgray] (0,0) to (1,1) to (2,2) to (3,1) to (4,0) to (5,1) to (6,2) to (7,1) to (8,2) to (9,1) to (10,2) to (11,1) to (12,0) to (11,-1) to (10,-2) to (9,-1) to (8,-2) to (7,-3) to (6,-4) to (5,-3) to (4,-2) to (3,-1) to (2,-2) to (1,-1) to (0,0);
\draw[very thick] (0,0) to (1,1) to (2,2) to (3,1) to (4,0) to (5,1) to (6,2) to (7,1) to (8,2) to (9,1) to (10,2) to (11,1) to (12,0);
\draw (0,0) to (1,-1) to (2,-2) to (3,-1) to (4,-2) to (5,-3) to (6,-4) to (7,-3) to (8,-2) to (9,-1) to (10,-2) to (11,-1) to (12,0);
\end{tikzpicture}} \xmapsto{\bija} 
\raisebox{-.5\height}{\begin{tikzpicture}[scale=.5]
            \filldraw (0,0) circle (2pt);
            \filldraw (1,1) circle (2pt);
            \filldraw (2,2) circle (2pt);
            \filldraw (3,1) circle (2pt);
            \filldraw (4,1) circle (2pt);
            \filldraw (5,2) circle (2pt);
            \filldraw (1,0) circle (2pt);
            \filldraw (2,0) circle (2pt);
            \filldraw (3,0) circle (2pt);
            \filldraw (4,0) circle (2pt);
            \filldraw (5,0) circle (2pt);
            \filldraw (6,3) circle (2pt);
            \filldraw (7,2) circle (2pt);
            \filldraw (8,2) circle (2pt);
            \filldraw (9,1) circle (2pt);
            \filldraw (10,2) circle (2pt);
            \filldraw (11,1) circle (2pt);
            \filldraw (12,0) circle (2pt);
            \filldraw (11,0) circle (2pt);
            \filldraw (10,0) circle (2pt);
            \filldraw (9,0) circle (2pt);
            \filldraw (8,0) circle (2pt);
            \filldraw (7,0) circle (2pt);
            \filldraw (6,0) circle (2pt);
            \draw[very thick] (3,1) to (4,1);
            \draw (0,0) to (1,1) to (2,2) to (3,1) to (4,1) to (5,2) to (6,3) to (7,2) to (8,2) to (9,1) to (10,2) to (11,1) to (12,0);
            \draw (0,0) to (12,0);
      \end{tikzpicture}}\]

\[
\raisebox{-.5\height}{
\begin{tikzpicture}[scale=.25]
\draw[gray] (0,0) to (6,6) to (12,0) to (6,-6) to (0,0);
\filldraw[lightgray] (0,0) to (2,2) to (3,1) to (1,-1) to (0,0);
\filldraw[lightgray] (3,1) to (6,4) to (10,0) to (8,-2) to (7,-1) to (6,-2) to (3,1);
\filldraw[lightgray] (10,0) to (11,1) to (12,0) to (11,-1) to (10,0);
\draw[very thick] (0,0) to (1,1) to (2,2) to (3,1) to (4,0) to (5,-1) to (6,-2) to (7,-1) to (8,-2) to (9,-1) to (10,0) to (11,1) to (12,0);
\draw (0,0) to (1,-1) to (2,0) to (3,1) to (4,2) to (5,3) to (6,4) to (7,3) to (8,2) to (9,1) to (10,0) to (11,-1) to (12,0);
\end{tikzpicture}} \xmapsto{\bija} 
\raisebox{-.5\height}{
\begin{tikzpicture}[scale=.5]
            \filldraw (0,0) circle (2pt);
            \filldraw (1,1) circle (2pt);
            \filldraw (2,1) circle (2pt);
            \filldraw (3,0) circle (2pt);
            \filldraw (4,-1) circle (2pt);
            \filldraw (5,-2) circle (2pt);
            \filldraw (1,0) circle (2pt);
            \filldraw (2,0) circle (2pt);
            \filldraw (3,0) circle (2pt);
            \filldraw (4,0) circle (2pt);
            \filldraw (5,0) circle (2pt);
            \filldraw (6,-3) circle (2pt);
            \filldraw (7,-2) circle (2pt);
            \filldraw (8,-2) circle (2pt);
            \filldraw (9,-1) circle (2pt);
            \filldraw (10,0) circle (2pt);
            \filldraw (11,1) circle (2pt);
            \filldraw (12,0) circle (2pt);
            \filldraw (11,1) circle (2pt);
            \filldraw (10,0) circle (2pt);
            \filldraw (9,0) circle (2pt);
            \filldraw (8,0) circle (2pt);
            \filldraw (7,0) circle (2pt);
            \filldraw (6,0) circle (2pt);
            \draw[very thick] (7,-2) to (8,-2);
            \draw (0,0) to (1,1) to (2,1) to (3,0) to (4,-1) to (5,-2) to (6,-3) to (7,-2) to (8,-2) to (9,-1) to (10,0) to (11,1) to (12,0);
            \draw (0,0) to (12,0);
\end{tikzpicture}}
\]

\caption{Illustration of the bijection $\bija$ from the table in~\eqref{eq:bija}.  The horizontal steps $O_1$ and $O_2$ are indicated with lines of different thickness.}
\label{fig:bija}
\end{figure}
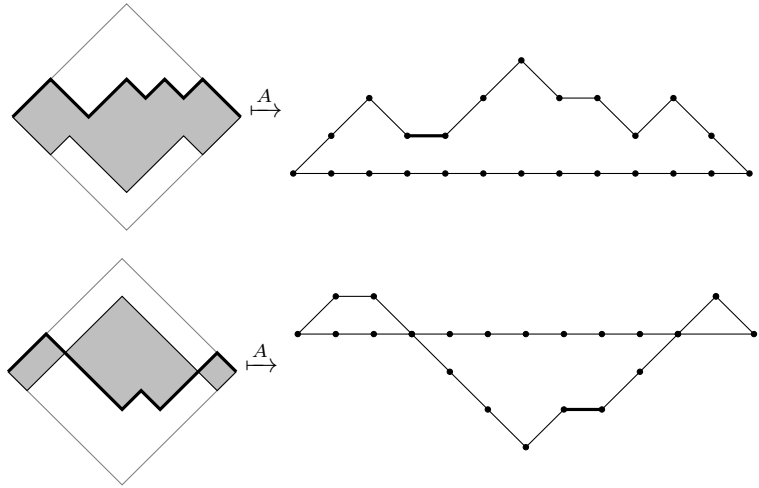


Write the set of all ordered pairs of paths in $\P_{k.n-k}$ as \[\Pt_{k,n-k}:=\P_{k,n-k} \times \P_{k,n-k},\] and denote the restriction of $\Pt_{k,n-k}$ to those pairs $(p,q)$ with $p\leq q$ as \[\Pl_{k,n-k} := \{(p,q) \in \Pt_{k,n-k} : p \leq q\}.\]  We begin by converting the total area between pairs of paths in $\Pt_{k,n-k}$ and $\Pl_{k,n-k}$ into the (unsigned) area under a single Motzkin path.

\begin{definition}
      Write $\Mb$ for the set of \defn{bilateral Motzkin paths}---that is, lattice paths from $(0,0)$ to $(n,0)$ for some $n \in \mathbb{Z}_{\geq 0}$ that use step set $\{U,D,O_1,O_2\}$.  We write $\Mb_{n,k}$ for the set of bilateral Motzkin paths that end at $(n,0)$ and use exactly $k$ steps of the form $U$ or $O_1$.  A \defn{bicolored Motzkin path} is a bilateral Motzkin path that stays weakly above the $x$-axis. Write $\M$ (resp. $\M_{n,k}$) for the set of bicolored Motzkin paths in $\Mb$ (resp. $\Mb_{n,k}$).
\end{definition}

\begin{proposition}\label{prop:bija}
The map $\bija\colon\Pt_{n,n-k}\to\Mb_{n,k}$ is a bijection satisfying $\d(p,q) = \d(\bija(p,q))$, and it restricts to a bijection from $\Pl_{n,n-k}$ to $\M_{n,k}$.
\end{proposition}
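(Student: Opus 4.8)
The plan is to reduce the whole statement to the single observation that the step-by-step rule in \eqref{eq:bija} defines a bijection $\widetilde{\bija}\colon\{U,D\}\times\{U,D\}\to\{U,D,O_1,O_2\}$ between two four-element alphabets (injective since the four right-hand entries of \eqref{eq:bija} are pairwise distinct). Applying $\widetilde{\bija}$ coordinate by coordinate and identifying a lattice path with its sequence of steps, $\bija$ becomes a bijection from $\{U,D\}^n\times\{U,D\}^n$ onto $\{U,D,O_1,O_2\}^n$. All three assertions then follow by identifying which subsets correspond on either side of this bijection.

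A pair $(p,q)$ in the domain $\Pt_{k,n-k}=\P_{k,n-k}\times\P_{k,n-k}$ is precisely a pair of paths of length $n$ with steps in $\{U,D\}$, each having exactly $k$ up-steps. By \eqref{eq:bija}, a given position is an up-step of $q$ iff it produces a $U$ or an $O_1$ in $r=\bija(p,q)$, and an up-step of $p$ iff it produces a $D$ or an $O_1$; writing $\#_X(r)$ for the number of steps of type $X$ in $r$, this says $q$ has $\#_U(r)+\#_{O_1}(r)$ up-steps and $p$ has $\#_D(r)+\#_{O_1}(r)$ up-steps. Hence ``$p$ and $q$ each have exactly $k$ up-steps'' is equivalent to the pair of conditions $\#_U(r)+\#_{O_1}(r)=k$ and $\#_U(r)=\#_D(r)$: the first is the defining constraint of $\Mb_{n,k}$, and the second says exactly that $r$ — viewed as a lattice path whose steps $U,D,O_1,O_2$ have vertical increments $+1,-1,0,0$ — returns to height $0$. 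So the coordinate-wise bijection restricts to a bijection $\Pt_{k,n-k}\to\Mb_{n,k}$, which is the first claim.

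The distance identity is essentially the computation carried out just before the proposition: induction on $i$, using the last two columns of \eqref{eq:bija} together with $r_0=0=(q_0-p_0)/2$, gives $r_i=(q_i-p_i)/2$ for all $i$. Since $p$ and $q$ share an endpoint we have $r_n=0$, so
\[
\d(\bija(p,q))=\sum_{i=0}^{n-1}|r_i|+\tfrac12|r_n|=\sum_{i=1}^{n-1}\left|\frac{q_i-p_i}{2}\right|=\d(p,q),
\]
the last equality being \eqref{eq:distance}, whose final summand also vanishes because $p_n=q_n$.

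Finally, for the restriction to $\Pl_{k,n-k}$ I would invoke the standard description of the order on order ideals of a rectangle: $p\le q$ in $\P_{k,n-k}=J([k]\times[n-k])$ holds precisely when the path of $p$ lies weakly below that of $q$, i.e.\ $p_i\le q_i$ for every $i$. Via the identity $r_i=(q_i-p_i)/2$, this is exactly the condition that $r=\bija(p,q)$ stays weakly above the $x$-axis, i.e.\ $r\in\M$; combined with the first claim this yields the bijection $\Pl_{k,n-k}\to\M_{n,k}$. I do not anticipate a genuine obstacle: the only points needing care are reconciling the two ``$k$ up-steps'' constraints with the single defining constraint of $\Mb_{n,k}$ (handled by the ``return to $0$'' remark above) and citing the well-known correspondence between containment of order ideals in a rectangle and the ``weakly below'' relation on the associated lattice paths.
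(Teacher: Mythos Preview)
Your proof is correct and follows essentially the same route as the paper's: the dictionary \eqref{eq:bija} is a bijection of four-letter alphabets, the endpoint/step-count constraints match up exactly with the definition of $\Mb_{n,k}$, the height identity $r_i=(q_i-p_i)/2$ gives $\d(p,q)=\d(\bija(p,q))$, and $p\le q$ translates into $r$ staying weakly above the $x$-axis. The paper's own proof is three terse sentences pointing to the same facts; you have simply filled in the details (in particular, the explicit check that ``$p$ and $q$ each have $k$ up-steps'' is equivalent to ``$\#_U(r)+\#_{O_1}(r)=k$ and $\#_U(r)=\#_D(r)$'').
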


\begin{proof}
      The dictionary in~\eqref{eq:bija} shows that $\bija$ is a bijection. When $p \leq q$, $\bija(p,q)$ never goes below the $x$-axis, so $\bija$ maps $\Pl_{n,n-k}$ onto $\M_{n,k}$.  The claim about $\d$ was proven immediately after~\eqref{eq:bija}.
\end{proof}




      






\section{Proof of~\Cref{thm:gf}}\label{sec:proof_rect_thm}

In this section, we use recurrences on Motzkin paths and their associated generating functions to deduce~\Cref{thm:gf}.  

\subsection{Bicolored Motzkin paths}

Define the generating function for bicolored Motzkin paths to be
      \[\M(x,u) = \sum_{n \geq 0 } \sum_{k \geq 0} |\M_{n,k}| x^{n} u^k,\]
so that the coefficient of $x^n u^k$ counts bicolored Motzkin paths of total length $n$ with exactly $k$ steps of the form $U$ and $O_1$.

\begin{proposition} \label{prop:m_decomp}
      The generating function $\M(x,u)$ satisfies the functional equation
            \[\M(x,u)=1+x(1+u)\M(x,u)+ux^2 \M(x,u)^2,\]
      with the explicit solution
            \[\M(x,u) = \frac{1-(u + 1)x - \sqrt{(u^2 - 2u + 1)x^2 - 2(u + 1)x + 1}}{2ux^2}.\]
\end{proposition}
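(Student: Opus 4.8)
The plan is to obtain the functional equation by the classical first-return decomposition of a bicolored Motzkin path, then to solve the resulting quadratic equation and select the branch that is a formal power series in $x$.

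First I would decompose a bicolored Motzkin path $\gamma$ according to its first step. Either $\gamma$ is the empty path, contributing $1$; or $\gamma$ begins with a flat step, which is either $O_1$ (of weight $xu$) or $O_2$ (of weight $x$), followed by an arbitrary bicolored Motzkin path; or $\gamma$ begins with an up step $U$ (of weight $xu$), in which case one records the first time $\gamma$ returns to the $x$-axis: the portion strictly between the initial $U$ and this return step is, after lowering it by $1$, again an arbitrary bicolored Motzkin path (it automatically stays weakly above the axis, since before lowering it stays weakly above height $1$), the return step is a $D$ (of weight $x$), and what follows is again an arbitrary bicolored Motzkin path. Since the variable $u$ marks exactly the steps of type $U$ and $O_1$, translating this decomposition into generating functions yields
\[\M(x,u) = 1 + x(1+u)\,\M(x,u) + u x^2\, \M(x,u)^2.\]

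Next I would treat this as a quadratic in $\M$: rewriting it as $u x^2 \M^2 + \big(x(1+u) - 1\big)\M + 1 = 0$ and applying the quadratic formula gives
\[\M(x,u) = \frac{1-(1+u)x \pm \sqrt{\big(1-(1+u)x\big)^2 - 4ux^2}}{2ux^2},\]
and expanding $\big(1-(1+u)x\big)^2 - 4ux^2 = 1 - 2(1+u)x + \big((1+u)^2 - 4u\big)x^2 = 1 - 2(u+1)x + (u^2 - 2u + 1)x^2$ identifies the radicand with the one in the statement. It then remains to fix the sign: $\M(x,u)$ is a formal power series in $x$ with $\M(0,u) = 1$, whereas the $+$ branch has numerator tending to $2$ as $x \to 0$ and hence a pole at $x=0$, while the $-$ branch has numerator of order $O(x^2)$, and a one-term Taylor expansion of the square root confirms that its value at $x=0$ is $1$. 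Hence the minus sign is the correct choice, which gives the claimed closed form.

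I do not anticipate a genuine obstacle here; the only points demanding care are the bookkeeping of the two colors $O_1,O_2$ against the marking variable $u$, the verification that the elevated sub-path arising in the $U\cdots D$ case is an unconstrained bicolored Motzkin path, and the branch selection at the end.
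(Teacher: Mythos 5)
Your proof is correct and follows essentially the same route as the paper: a first-return decomposition of bicolored Motzkin paths into the empty path, the $O_1$/$O_2$ cases, and the $U\cdots D$ case, followed by solving the quadratic. Your branch-selection argument at the end is a detail the paper leaves implicit, but the approach is identical.
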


\begin{proof}
      The functional equation comes from decomposing a lattice path $r\in \M$ by first return to the $x$-axis: $r$ is empty; or $r$ starts with an $O_1$ step; or $r$ starts with an $O_2$ step; or $r$ starts with a $U$ step.  This is illustrated in~\Cref{fig:m_decomp}.  The explicit solution is easily obtained by solving this quadratic equation in $\M(x,u)$.
\end{proof}

\begin{figure}[htbp]

\scalebox{.8}{
$\begin{array}{ccccccccc}
 \raisebox{-.5\height}{\begin{tikzpicture}[scale=.25]
            \node (1) at (5.5,-1) {$\M$};
            \filldraw (0,0) circle (2pt);
            \filldraw (1,1) circle (2pt);
            \filldraw (2,2) circle (2pt);
            \filldraw (3,1) circle (2pt);
            \filldraw (4,1) circle (2pt);
            \filldraw (5,2) circle (2pt);
            \filldraw (1,0) circle (2pt);
            \filldraw (2,0) circle (2pt);
            \filldraw (3,0) circle (2pt);
            \filldraw (4,0) circle (2pt);
            \filldraw (5,0) circle (2pt);
            \filldraw (6.5,3) circle (2pt);
            \filldraw (7.5,2) circle (2pt);
            \filldraw (8.5,2) circle (2pt);
            \filldraw (9.5,1) circle (2pt);
            \filldraw (10.5,2) circle (2pt);
            \filldraw (11.5,1) circle (2pt);
            \filldraw (12.5,0) circle (2pt);
            \filldraw (11.5,0) circle (2pt);
            \filldraw (10.5,0) circle (2pt);
            \filldraw (9.5,0) circle (2pt);
            \filldraw (8.5,0) circle (2pt);
            \filldraw (7.5,0) circle (2pt);
            \filldraw (6.5,0) circle (2pt);
            \draw (0,0) to (1,1) to (2,2) to (3,1) to (4,1) to (5,2);
            \draw (0,0) to (5,0);
            \draw (6.5,0) to (12.5,0);
            \draw (6.5,3) to (7.5,2) to (8.5,2) to (9.5,1) to (10.5,2) to (11.5,1) to (12.5,0);
            \node (a) at (5.75,0) {\scalebox{.5}{$\ldots$}};
            \node (b) at (5.75,2) {\scalebox{.5}{$\ldots$}};
            \node (c) at (5.75,3) {\scalebox{.5}{$\ldots$}};
      \end{tikzpicture}} & = & \raisebox{-.5\height}{\begin{tikzpicture} \node (0) at (0,0) {$\bullet$}; \node (1) at (0,-.5) {$1$}; \end{tikzpicture}} & \bigsqcup &  
\raisebox{-.5\height}{\begin{tikzpicture}[scale=.25]
            \filldraw (-1,0) circle (2pt);
            \node (0) at (-.5,-1) {$ux$};
            \node (1) at (5.5,-1) {$\M$};
            \filldraw (0,0) circle (2pt);
            \filldraw (1,1) circle (2pt);
            \filldraw (2,2) circle (2pt);
            \filldraw (3,1) circle (2pt);
            \filldraw (4,1) circle (2pt);
            \filldraw (5,2) circle (2pt);
            \filldraw (1,0) circle (2pt);
            \filldraw (2,0) circle (2pt);
            \filldraw (3,0) circle (2pt);
            \filldraw (4,0) circle (2pt);
            \filldraw (5,0) circle (2pt);
            \filldraw (6.5,3) circle (2pt);
            \filldraw (7.5,2) circle (2pt);
            \filldraw (8.5,2) circle (2pt);
            \filldraw (9.5,1) circle (2pt);
            \filldraw (10.5,2) circle (2pt);
            \filldraw (11.5,1) circle (2pt);
            \filldraw (12.5,0) circle (2pt);
            \filldraw (11.5,0) circle (2pt);
            \filldraw (10.5,0) circle (2pt);
            \filldraw (9.5,0) circle (2pt);
            \filldraw (8.5,0) circle (2pt);
            \filldraw (7.5,0) circle (2pt);
            \filldraw (6.5,0) circle (2pt);
            \draw (0,0) to (1,1) to (2,2) to (3,1) to (4,1) to (5,2);
            \draw (-1,0) to (5,0);
            \draw (6.5,0) to (12.5,0);
            \draw (6.5,3) to (7.5,2) to (8.5,2) to (9.5,1) to (10.5,2) to (11.5,1) to (12.5,0);
            \node (a) at (5.75,0) {\scalebox{.5}{$\ldots$}};
            \node (b) at (5.75,2) {\scalebox{.5}{$\ldots$}};
            \node (c) at (5.75,3) {\scalebox{.5}{$\ldots$}};
      \end{tikzpicture}} & \bigsqcup &
      \raisebox{-.5\height}{\begin{tikzpicture}[scale=.25]
            \node (0) at (-.5,-1) {$x$};
            \node (1) at (5.5,-1) {$\M$};
            \filldraw (0,0) circle (2pt);
            \filldraw (-1,0) circle (2pt);
            \filldraw (0,0) circle (2pt);
            \filldraw (1,1) circle (2pt);
            \filldraw (2,2) circle (2pt);
            \filldraw (3,1) circle (2pt);
            \filldraw (4,1) circle (2pt);
            \filldraw (5,2) circle (2pt);
            \filldraw (1,0) circle (2pt);
            \filldraw (2,0) circle (2pt);
            \filldraw (3,0) circle (2pt);
            \filldraw (4,0) circle (2pt);
            \filldraw (5,0) circle (2pt);
            \filldraw (6.5,3) circle (2pt);
            \filldraw (7.5,2) circle (2pt);
            \filldraw (8.5,2) circle (2pt);
            \filldraw (9.5,1) circle (2pt);
            \filldraw (10.5,2) circle (2pt);
            \filldraw (11.5,1) circle (2pt);
            \filldraw (12.5,0) circle (2pt);
            \filldraw (11.5,0) circle (2pt);
            \filldraw (10.5,0) circle (2pt);
            \filldraw (9.5,0) circle (2pt);
            \filldraw (8.5,0) circle (2pt);
            \filldraw (7.5,0) circle (2pt);
            \filldraw (6.5,0) circle (2pt);
            \draw[very thick] (-1,0) to (0,0);
            \draw (0,0) to (1,1) to (2,2) to (3,1) to (4,1) to (5,2);
            \draw (0,0) to (5,0);
            \draw (6.5,0) to (12.5,0);
            \draw (6.5,3) to (7.5,2) to (8.5,2) to (9.5,1) to (10.5,2) to (11.5,1) to (12.5,0);
            \node (a) at (5.75,0) {\scalebox{.5}{$\ldots$}};
            \node (b) at (5.75,2) {\scalebox{.5}{$\ldots$}};
            \node (c) at (5.75,3) {\scalebox{.5}{$\ldots$}};
      \end{tikzpicture}} & \bigsqcup &
      \raisebox{-.5\height}{
      \begin{tikzpicture}[scale=.125]
            \draw[thick] (-1,-1) to (0,0);
            \draw[thick] (12.5,0) to (13.5,-1);
            \node (1) at (5.5,-3) {$\M$};
            \node (2) at (19,-3) {$\M$};
            \node (3) at (-.5,-3) {$ux$};
            \node (4) at (12.5,-3) {$x$};
            \filldraw (0,0) circle (2pt);
            \filldraw (1,1) circle (2pt);
            \filldraw (2,2) circle (2pt);
            \filldraw (3,1) circle (2pt);
            \filldraw (4,1) circle (2pt);
            \filldraw (5,2) circle (2pt);
            \filldraw (1,0) circle (2pt);
            \filldraw (2,0) circle (2pt);
            \filldraw (3,0) circle (2pt);
            \filldraw (4,0) circle (2pt);
            \filldraw (5,0) circle (2pt);
            \filldraw (6.5,3) circle (2pt);
            \filldraw (7.5,2) circle (2pt);
            \filldraw (8.5,2) circle (2pt);
            \filldraw (9.5,1) circle (2pt);
            \filldraw (10.5,2) circle (2pt);
            \filldraw (11.5,1) circle (2pt);
            \filldraw (12.5,0) circle (2pt);
            \filldraw (11.5,0) circle (2pt);
            \filldraw (10.5,0) circle (2pt);
            \filldraw (9.5,0) circle (2pt);
            \filldraw (8.5,0) circle (2pt);
            \filldraw (7.5,0) circle (2pt);
            \filldraw (6.5,0) circle (2pt);
            \draw (0,0) to (1,1) to (2,2) to (3,1) to (4,1) to (5,2);
            \draw (0,0) to (5,0);
            \draw (6.5,0) to (12.5,0);
            \draw (6.5,3) to (7.5,2) to (8.5,2) to (9.5,1) to (10.5,2) to (11.5,1) to (12.5,0);
            \node (a) at (5.75,0) {\scalebox{.25}{$\ldots$}};
            \node (b) at (5.75,2) {\scalebox{.25}{$\ldots$}};
            \node (c) at (5.75,3) {\scalebox{.25}{$\ldots$}};
            \filldraw (13.5,-1) circle (2pt);
            \filldraw (14.5,0) circle (2pt);
            \filldraw (15.5,1) circle (2pt);
            \filldraw (16.5,0) circle (2pt);
            \filldraw (17.5,0) circle (2pt);
            \filldraw (18.5,1) circle (2pt);
            \filldraw (14.5,-1) circle (2pt);
            \filldraw (15.5,-1) circle (2pt);
            \filldraw (16.5,-1) circle (2pt);
            \filldraw (17.5,-1) circle (2pt);
            \filldraw (18.5,-1) circle (2pt);
            \filldraw (20,2) circle (2pt);
            \filldraw (21,1) circle (2pt);
            \filldraw (22,1) circle (2pt);
            \filldraw (23,0) circle (2pt);
            \filldraw (24,1) circle (2pt);
            \filldraw (25,0) circle (2pt);
            \filldraw (26,-1) circle (2pt);
            \filldraw (25,-1) circle (2pt);
            \filldraw (24,-1) circle (2pt);
            \filldraw (23,-1) circle (2pt);
            \filldraw (22,-1) circle (2pt);
            \filldraw (21,-1) circle (2pt);
            \filldraw (20,-1) circle (2pt);
            \draw (13.5,-1) to (14.5,0) to (15.5,1) to (16.5,0) to (17.5,0) to (18.5,1);
            \draw (13.5,-1) to (18.5,-1);
            \draw (20,-1) to (26,-1);
            \draw (20,2) to (21,1) to (22,1) to (23,0) to (24,1) to (25,0) to (26,-1);
            \node (d) at (19.25,-1) {\scalebox{.25}{$\ldots$}};
            \node (e) at (19.25,1) {\scalebox{.25}{$\ldots$}};
            \node (f) at (19.25,2) {\scalebox{.25}{$\ldots$}};
      \end{tikzpicture}
      }
\end{array}$}

\noindent\rule{\textwidth}{1pt}

\scalebox{.8}{
$\begin{array}{ccccccccc}
 \raisebox{-.5\height}{\begin{tikzpicture}[scale=.25]
             \filldraw[lightgray,fill=lightgray] (0,0) to (1,1) to (2,2) to (3,1) to (4,1) to (5,2) to (5,0) to (0,0);
            \filldraw[lightgray,fill=lightgray] (6.5,3) to (7.5,2) to (8.5,2) to (9.5,1) to (10.5,2) to (11.5,1) to (12.5,0) to (6.5,0) to (6.5,3);
            \node (1) at (5.5,-1) {$\Mm$};
            \filldraw (0,0) circle (2pt);
            \filldraw (1,1) circle (2pt);
            \filldraw (2,2) circle (2pt);
            \filldraw (3,1) circle (2pt);
            \filldraw (4,1) circle (2pt);
            \filldraw (5,2) circle (2pt);
            \filldraw (1,0) circle (2pt);
            \filldraw (2,0) circle (2pt);
            \filldraw (3,0) circle (2pt);
            \filldraw (4,0) circle (2pt);
            \filldraw (5,0) circle (2pt);
            \filldraw (6.5,3) circle (2pt);
            \filldraw (7.5,2) circle (2pt);
            \filldraw (8.5,2) circle (2pt);
            \filldraw (9.5,1) circle (2pt);
            \filldraw (10.5,2) circle (2pt);
            \filldraw (11.5,1) circle (2pt);
            \filldraw (12.5,0) circle (2pt);
            \filldraw (11.5,0) circle (2pt);
            \filldraw (10.5,0) circle (2pt);
            \filldraw (9.5,0) circle (2pt);
            \filldraw (8.5,0) circle (2pt);
            \filldraw (7.5,0) circle (2pt);
            \filldraw (6.5,0) circle (2pt);
            \draw (0,0) to (1,1) to (2,2) to (3,1) to (4,1) to (5,2);
            \draw (0,0) to (5,0);
            \draw (6.5,0) to (12.5,0);
            \draw (6.5,3) to (7.5,2) to (8.5,2) to (9.5,1) to (10.5,2) to (11.5,1) to (12.5,0);
            \node (a) at (5.75,0) {\scalebox{.5}{$\ldots$}};
            \node (b) at (5.75,2) {\scalebox{.5}{$\ldots$}};
            \node (c) at (5.75,3) {\scalebox{.5}{$\ldots$}};
      \end{tikzpicture}} & = &  &  &  
\raisebox{-.5\height}{\begin{tikzpicture}[scale=.25]
            \filldraw[lightgray,fill=lightgray] (0,0) to (1,1) to (2,2) to (3,1) to (4,1) to (5,2) to (5,0) to (0,0);
            \filldraw[lightgray,fill=lightgray] (6.5,3) to (7.5,2) to (8.5,2) to (9.5,1) to (10.5,2) to (11.5,1) to (12.5,0) to (6.5,0) to (6.5,3);
            \filldraw (-1,0) circle (2pt);
            \node (0) at (-.5,-1) {$ux$};
            \node (1) at (5.5,-1) {$\Mm$};
            \filldraw (0,0) circle (2pt);
            \filldraw (1,1) circle (2pt);
            \filldraw (2,2) circle (2pt);
            \filldraw (3,1) circle (2pt);
            \filldraw (4,1) circle (2pt);
            \filldraw (5,2) circle (2pt);
            \filldraw (1,0) circle (2pt);
            \filldraw (2,0) circle (2pt);
            \filldraw (3,0) circle (2pt);
            \filldraw (4,0) circle (2pt);
            \filldraw (5,0) circle (2pt);
            \filldraw (6.5,3) circle (2pt);
            \filldraw (7.5,2) circle (2pt);
            \filldraw (8.5,2) circle (2pt);
            \filldraw (9.5,1) circle (2pt);
            \filldraw (10.5,2) circle (2pt);
            \filldraw (11.5,1) circle (2pt);
            \filldraw (12.5,0) circle (2pt);
            \filldraw (11.5,0) circle (2pt);
            \filldraw (10.5,0) circle (2pt);
            \filldraw (9.5,0) circle (2pt);
            \filldraw (8.5,0) circle (2pt);
            \filldraw (7.5,0) circle (2pt);
            \filldraw (6.5,0) circle (2pt);
            \draw (0,0) to (1,1) to (2,2) to (3,1) to (4,1) to (5,2);
            \draw (-1,0) to (5,0);
            \draw (6.5,0) to (12.5,0);
            \draw (6.5,3) to (7.5,2) to (8.5,2) to (9.5,1) to (10.5,2) to (11.5,1) to (12.5,0);
            \node (a) at (5.75,0) {\scalebox{.5}{$\ldots$}};
            \node (b) at (5.75,2) {\scalebox{.5}{$\ldots$}};
            \node (c) at (5.75,3) {\scalebox{.5}{$\ldots$}};
      \end{tikzpicture}} & \bigsqcup &
      \raisebox{-.5\height}{\begin{tikzpicture}[scale=.25]
            \filldraw[lightgray,fill=lightgray] (0,0) to (1,1) to (2,2) to (3,1) to (4,1) to (5,2) to (5,0) to (0,0);
            \filldraw[lightgray,fill=lightgray] (6.5,3) to (7.5,2) to (8.5,2) to (9.5,1) to (10.5,2) to (11.5,1) to (12.5,0) to (6.5,0) to (6.5,3);
            \node (0) at (-.5,-1) {$x$};
            \node (1) at (5.5,-1) {$\Mm$};
            \filldraw (0,0) circle (2pt);
            \filldraw (-1,0) circle (2pt);
            \filldraw (0,0) circle (2pt);
            \filldraw (1,1) circle (2pt);
            \filldraw (2,2) circle (2pt);
            \filldraw (3,1) circle (2pt);
            \filldraw (4,1) circle (2pt);
            \filldraw (5,2) circle (2pt);
            \filldraw (1,0) circle (2pt);
            \filldraw (2,0) circle (2pt);
            \filldraw (3,0) circle (2pt);
            \filldraw (4,0) circle (2pt);
            \filldraw (5,0) circle (2pt);
            \filldraw (6.5,3) circle (2pt);
            \filldraw (7.5,2) circle (2pt);
            \filldraw (8.5,2) circle (2pt);
            \filldraw (9.5,1) circle (2pt);
            \filldraw (10.5,2) circle (2pt);
            \filldraw (11.5,1) circle (2pt);
            \filldraw (12.5,0) circle (2pt);
            \filldraw (11.5,0) circle (2pt);
            \filldraw (10.5,0) circle (2pt);
            \filldraw (9.5,0) circle (2pt);
            \filldraw (8.5,0) circle (2pt);
            \filldraw (7.5,0) circle (2pt);
            \filldraw (6.5,0) circle (2pt);
            \draw[very thick] (-1,0) to (0,0);
            \draw (0,0) to (1,1) to (2,2) to (3,1) to (4,1) to (5,2);
            \draw (0,0) to (5,0);
            \draw (6.5,0) to (12.5,0);
            \draw (6.5,3) to (7.5,2) to (8.5,2) to (9.5,1) to (10.5,2) to (11.5,1) to (12.5,0);
            \node (a) at (5.75,0) {\scalebox{.5}{$\ldots$}};
            \node (b) at (5.75,2) {\scalebox{.5}{$\ldots$}};
            \node (c) at (5.75,3) {\scalebox{.5}{$\ldots$}};
      \end{tikzpicture}} & \bigsqcup &
      \raisebox{-.5\height}{
      \begin{tikzpicture}[scale=.125]
            \filldraw[lightgray,fill=lightgray] (0,0) to (1,1) to (2,2) to (3,1) to (4,1) to (5,2) to (5,0) to (0,0);
            \filldraw[lightgray,fill=lightgray] (6.5,3) to (7.5,2) to (8.5,2) to (9.5,1) to (10.5,2) to (11.5,1) to (12.5,0) to (6.5,0) to (6.5,3);
            \draw[thick] (-1,-1) to (0,0);
            \draw[thick] (12.5,0) to (13.5,-1);
            \node (1) at (5.5,-3) {$\Mm$};
            \node (2) at (19,-3) {$\M$};
            \node (3) at (-.5,-3) {$ux$};
            \node (4) at (12.5,-3) {$x$};
            \filldraw (0,0) circle (2pt);
            \filldraw (1,1) circle (2pt);
            \filldraw (2,2) circle (2pt);
            \filldraw (3,1) circle (2pt);
            \filldraw (4,1) circle (2pt);
            \filldraw (5,2) circle (2pt);
            \filldraw (1,0) circle (2pt);
            \filldraw (2,0) circle (2pt);
            \filldraw (3,0) circle (2pt);
            \filldraw (4,0) circle (2pt);
            \filldraw (5,0) circle (2pt);
            \filldraw (6.5,3) circle (2pt);
            \filldraw (7.5,2) circle (2pt);
            \filldraw (8.5,2) circle (2pt);
            \filldraw (9.5,1) circle (2pt);
            \filldraw (10.5,2) circle (2pt);
            \filldraw (11.5,1) circle (2pt);
            \filldraw (12.5,0) circle (2pt);
            \filldraw (11.5,0) circle (2pt);
            \filldraw (10.5,0) circle (2pt);
            \filldraw (9.5,0) circle (2pt);
            \filldraw (8.5,0) circle (2pt);
            \filldraw (7.5,0) circle (2pt);
            \filldraw (6.5,0) circle (2pt);
            \draw (0,0) to (1,1) to (2,2) to (3,1) to (4,1) to (5,2);
            \draw (0,0) to (5,0);
            \draw (6.5,0) to (12.5,0);
            \draw (6.5,3) to (7.5,2) to (8.5,2) to (9.5,1) to (10.5,2) to (11.5,1) to (12.5,0);
            \node (a) at (5.75,0) {\scalebox{.25}{$\ldots$}};
            \node (b) at (5.75,2) {\scalebox{.25}{$\ldots$}};
            \node (c) at (5.75,3) {\scalebox{.25}{$\ldots$}};
            \filldraw (13.5,-1) circle (2pt);
            \filldraw (14.5,0) circle (2pt);
            \filldraw (15.5,1) circle (2pt);
            \filldraw (16.5,0) circle (2pt);
            \filldraw (17.5,0) circle (2pt);
            \filldraw (18.5,1) circle (2pt);
            \filldraw (14.5,-1) circle (2pt);
            \filldraw (15.5,-1) circle (2pt);
            \filldraw (16.5,-1) circle (2pt);
            \filldraw (17.5,-1) circle (2pt);
            \filldraw (18.5,-1) circle (2pt);
            \filldraw (20,2) circle (2pt);
            \filldraw (21,1) circle (2pt);
            \filldraw (22,1) circle (2pt);
            \filldraw (23,0) circle (2pt);
            \filldraw (24,1) circle (2pt);
            \filldraw (25,0) circle (2pt);
            \filldraw (26,-1) circle (2pt);
            \filldraw (25,-1) circle (2pt);
            \filldraw (24,-1) circle (2pt);
            \filldraw (23,-1) circle (2pt);
            \filldraw (22,-1) circle (2pt);
            \filldraw (21,-1) circle (2pt);
            \filldraw (20,-1) circle (2pt);
            \draw (13.5,-1) to (14.5,0) to (15.5,1) to (16.5,0) to (17.5,0) to (18.5,1);
            \draw (13.5,-1) to (18.5,-1);
            \draw (20,-1) to (26,-1);
            \draw (20,2) to (21,1) to (22,1) to (23,0) to (24,1) to (25,0) to (26,-1);
            \node (d) at (19.25,-1) {\scalebox{.25}{$\ldots$}};
            \node (e) at (19.25,1) {\scalebox{.25}{$\ldots$}};
            \node (f) at (19.25,2) {\scalebox{.25}{$\ldots$}};
      \end{tikzpicture}
      }\\
       & &  & & & \bigsqcup & \raisebox{-.5\height}{
      \begin{tikzpicture}[scale=.125]
            \draw[thick] (-1,-1) to (0,0);
            \draw[thick] (12.5,0) to (13.5,-1);
            \node (1) at (5.5,-3) {$\M$};
            \node (2) at (19,-3) {$\Mm$};
            \node (3) at (-.5,-3) {$ux$};
            \node (4) at (12.5,-3) {$x$};
            \filldraw (0,0) circle (2pt);
            \filldraw (1,1) circle (2pt);
            \filldraw (2,2) circle (2pt);
            \filldraw (3,1) circle (2pt);
            \filldraw (4,1) circle (2pt);
            \filldraw (5,2) circle (2pt);
            \filldraw (1,0) circle (2pt);
            \filldraw (2,0) circle (2pt);
            \filldraw (3,0) circle (2pt);
            \filldraw (4,0) circle (2pt);
            \filldraw (5,0) circle (2pt);
            \filldraw (6.5,3) circle (2pt);
            \filldraw (7.5,2) circle (2pt);
            \filldraw (8.5,2) circle (2pt);
            \filldraw (9.5,1) circle (2pt);
            \filldraw (10.5,2) circle (2pt);
            \filldraw (11.5,1) circle (2pt);
            \filldraw (12.5,0) circle (2pt);
            \filldraw (11.5,0) circle (2pt);
            \filldraw (10.5,0) circle (2pt);
            \filldraw (9.5,0) circle (2pt);
            \filldraw (8.5,0) circle (2pt);
            \filldraw (7.5,0) circle (2pt);
            \filldraw (6.5,0) circle (2pt);
            \draw (0,0) to (1,1) to (2,2) to (3,1) to (4,1) to (5,2);
            \draw (0,0) to (5,0);
            \draw (6.5,0) to (12.5,0);
            \draw (6.5,3) to (7.5,2) to (8.5,2) to (9.5,1) to (10.5,2) to (11.5,1) to (12.5,0);
            \node (a) at (5.75,0) {\scalebox{.25}{$\ldots$}};
            \node (b) at (5.75,2) {\scalebox{.25}{$\ldots$}};
            \node (c) at (5.75,3) {\scalebox{.25}{$\ldots$}};
            \filldraw[lightgray,fill=lightgray] (13.5,-1) to (14.5,0) to (15.5,1) to (16.5,0) to (17.5,0) to (18.5,1) to (18.5,-1) to (13.5,-1);
            \filldraw[lightgray,fill=lightgray] (20,2) to (21,1) to (22,1) to (23,0) to (24,1) to (25,0) to (26,-1) to (20,-1) to (20,2);
            \filldraw (13.5,-1) circle (2pt);
            \filldraw (14.5,0) circle (2pt);
            \filldraw (15.5,1) circle (2pt);
            \filldraw (16.5,0) circle (2pt);
            \filldraw (17.5,0) circle (2pt);
            \filldraw (18.5,1) circle (2pt);
            \filldraw (14.5,-1) circle (2pt);
            \filldraw (15.5,-1) circle (2pt);
            \filldraw (16.5,-1) circle (2pt);
            \filldraw (17.5,-1) circle (2pt);
            \filldraw (18.5,-1) circle (2pt);
            \filldraw (20,2) circle (2pt);
            \filldraw (21,1) circle (2pt);
            \filldraw (22,1) circle (2pt);
            \filldraw (23,0) circle (2pt);
            \filldraw (24,1) circle (2pt);
            \filldraw (25,0) circle (2pt);
            \filldraw (26,-1) circle (2pt);
            \filldraw (25,-1) circle (2pt);
            \filldraw (24,-1) circle (2pt);
            \filldraw (23,-1) circle (2pt);
            \filldraw (22,-1) circle (2pt);
            \filldraw (21,-1) circle (2pt);
            \filldraw (20,-1) circle (2pt);
            \draw (13.5,-1) to (14.5,0) to (15.5,1) to (16.5,0) to (17.5,0) to (18.5,1);
            \draw (13.5,-1) to (18.5,-1);
            \draw (20,-1) to (26,-1);
            \draw (20,2) to (21,1) to (22,1) to (23,0) to (24,1) to (25,0) to (26,-1);
            \node (d) at (19.25,-1) {\scalebox{.25}{$\ldots$}};
            \node (e) at (19.25,1) {\scalebox{.25}{$\ldots$}};
            \node (f) at (19.25,2) {\scalebox{.25}{$\ldots$}};
      \end{tikzpicture}
      }  & \bigsqcup &
      \raisebox{-.5\height}{
      \begin{tikzpicture}[scale=.125]
            \filldraw[lightgray,fill=lightgray] (-1,-1) to (0,0) to (12.5,0) to (13.5,-1) to (-1,-1);
            \draw[thick] (-1,-1) to (0,0);
            \draw[thick] (12.5,0) to (13.5,-1);
            \node (1) at (5.5,-4) {\scalebox{0.8}{$\partial_x(x\M)$}};
            \node (2) at (19,-3) {$\M$};
            \node (3) at (-.5,-3) {$ux$};
            \node (4) at (12.5,-3) {$x$};
            \filldraw (0,0) circle (2pt);
            \filldraw (1,1) circle (2pt);
            \filldraw (2,2) circle (2pt);
            \filldraw (3,1) circle (2pt);
            \filldraw (4,1) circle (2pt);
            \filldraw (5,2) circle (2pt);
            \filldraw (1,0) circle (2pt);
            \filldraw (2,0) circle (2pt);
            \filldraw (3,0) circle (2pt);
            \filldraw (4,0) circle (2pt);
            \filldraw (5,0) circle (2pt);
            \filldraw (6.5,3) circle (2pt);
            \filldraw (7.5,2) circle (2pt);
            \filldraw (8.5,2) circle (2pt);
            \filldraw (9.5,1) circle (2pt);
            \filldraw (10.5,2) circle (2pt);
            \filldraw (11.5,1) circle (2pt);
            \filldraw (12.5,0) circle (2pt);
            \filldraw (11.5,0) circle (2pt);
            \filldraw (10.5,0) circle (2pt);
            \filldraw (9.5,0) circle (2pt);
            \filldraw (8.5,0) circle (2pt);
            \filldraw (7.5,0) circle (2pt);
            \filldraw (6.5,0) circle (2pt);
            \draw (0,0) to (1,1) to (2,2) to (3,1) to (4,1) to (5,2);
            \draw (0,0) to (5,0);
            \draw (6.5,0) to (12.5,0);
            \draw (6.5,3) to (7.5,2) to (8.5,2) to (9.5,1) to (10.5,2) to (11.5,1) to (12.5,0);
            \node (a) at (5.75,0) {\scalebox{.25}{$\ldots$}};
            \node (b) at (5.75,2) {\scalebox{.25}{$\ldots$}};
            \node (c) at (5.75,3) {\scalebox{.25}{$\ldots$}};
            \filldraw (13.5,-1) circle (2pt);
            \filldraw (14.5,0) circle (2pt);
            \filldraw (15.5,1) circle (2pt);
            \filldraw (16.5,0) circle (2pt);
            \filldraw (17.5,0) circle (2pt);
            \filldraw (18.5,1) circle (2pt);
            \filldraw (14.5,-1) circle (2pt);
            \filldraw (15.5,-1) circle (2pt);
            \filldraw (16.5,-1) circle (2pt);
            \filldraw (17.5,-1) circle (2pt);
            \filldraw (18.5,-1) circle (2pt);
            \filldraw (20,2) circle (2pt);
            \filldraw (21,1) circle (2pt);
            \filldraw (22,1) circle (2pt);
            \filldraw (23,0) circle (2pt);
            \filldraw (24,1) circle (2pt);
            \filldraw (25,0) circle (2pt);
            \filldraw (26,-1) circle (2pt);
            \filldraw (25,-1) circle (2pt);
            \filldraw (24,-1) circle (2pt);
            \filldraw (23,-1) circle (2pt);
            \filldraw (22,-1) circle (2pt);
            \filldraw (21,-1) circle (2pt);
            \filldraw (20,-1) circle (2pt);
            \draw (13.5,-1) to (14.5,0) to (15.5,1) to (16.5,0) to (17.5,0) to (18.5,1);
            \draw (13.5,-1) to (18.5,-1);
            \draw (20,-1) to (26,-1);
            \draw (20,2) to (21,1) to (22,1) to (23,0) to (24,1) to (25,0) to (26,-1);
            \node (d) at (19.25,-1) {\scalebox{.25}{$\ldots$}};
            \node (e) at (19.25,1) {\scalebox{.25}{$\ldots$}};
            \node (f) at (19.25,2) {\scalebox{.25}{$\ldots$}};
      \end{tikzpicture}
      } 
\end{array}$}

\noindent\rule{\textwidth}{1pt}

\scalebox{.8}{
$\begin{array}{ccccccccc}
 \raisebox{-.5\height}{\begin{tikzpicture}[scale=.25]
            \node (1) at (5.5,2) {$\Mb$};
            \filldraw (0,0) circle (2pt);
            \filldraw (1,1) circle (2pt);
            \filldraw (2,1) circle (2pt);
            \filldraw (3,0) circle (2pt);
            \filldraw (4,-1) circle (2pt);
            \filldraw (5,-2) circle (2pt);
            \filldraw (1,0) circle (2pt);
            \filldraw (2,0) circle (2pt);
            \filldraw (3,0) circle (2pt);
            \filldraw (4,0) circle (2pt);
            \filldraw (5,0) circle (2pt);
            \filldraw (6.5,-3) circle (2pt);
            \filldraw (7.5,-2) circle (2pt);
            \filldraw (8.5,-2) circle (2pt);
            \filldraw (9.5,-1) circle (2pt);
            \filldraw (10.5,0) circle (2pt);
            \filldraw (11.5,1) circle (2pt);
            \filldraw (12.5,0) circle (2pt);
            \filldraw (11.5,1) circle (2pt);
            \filldraw (10.5,0) circle (2pt);
            \filldraw (9.5,0) circle (2pt);
            \filldraw (8.5,0) circle (2pt);
            \filldraw (7.5,0) circle (2pt);
            \filldraw (6.5,0) circle (2pt);
            \draw (0,0) to (1,1) to (2,1) to (3,0) to (4,-1) to (5,-2);
            \draw (0,0) to (5,0);
            \draw (6.5,0) to (12.5,0);
            \draw (6.5,-3) to (7.5,-2) to (8.5,-2) to (9.5,-1) to (10.5,0) to (11.5,1) to (12.5,0);
            \node (a) at (5.75,0) {\scalebox{.5}{$\ldots$}};
            \node (b) at (5.75,-2) {\scalebox{.5}{$\ldots$}};
            \node (c) at (5.75,-3) {\scalebox{.5}{$\ldots$}};
      \end{tikzpicture}} & = & \raisebox{-.5\height}{\begin{tikzpicture} \node (0) at (0,0) {$\bullet$}; \node (1) at (0,-.5) {$1$}; \end{tikzpicture}} & \bigsqcup &  
\raisebox{-.5\height}{\begin{tikzpicture}[scale=.25]
            \filldraw (-1,0) circle (2pt);
            \node (0) at (-.5,-1) {$ux$};
            \node (1) at (5.5,2) {$\Mb$};
            \filldraw (0,0) circle (2pt);
            \filldraw (1,1) circle (2pt);
            \filldraw (2,1) circle (2pt);
            \filldraw (3,0) circle (2pt);
            \filldraw (4,-1) circle (2pt);
            \filldraw (5,-2) circle (2pt);
            \filldraw (1,0) circle (2pt);
            \filldraw (2,0) circle (2pt);
            \filldraw (3,0) circle (2pt);
            \filldraw (4,0) circle (2pt);
            \filldraw (5,0) circle (2pt);
            \filldraw (6.5,-3) circle (2pt);
            \filldraw (7.5,-2) circle (2pt);
            \filldraw (8.5,-2) circle (2pt);
            \filldraw (9.5,-1) circle (2pt);
            \filldraw (10.5,0) circle (2pt);
            \filldraw (11.5,1) circle (2pt);
            \filldraw (12.5,0) circle (2pt);
            \filldraw (11.5,1) circle (2pt);
            \filldraw (10.5,0) circle (2pt);
            \filldraw (9.5,0) circle (2pt);
            \filldraw (8.5,0) circle (2pt);
            \filldraw (7.5,0) circle (2pt);
            \filldraw (6.5,0) circle (2pt);
            \draw (0,0) to (1,1) to (2,1) to (3,0) to (4,-1) to (5,-2);
            \draw (-1,0) to (5,0);
            \draw (6.5,0) to (12.5,0);
            \draw (6.5,-3) to (7.5,-2) to (8.5,-2) to (9.5,-1) to (10.5,0) to (11.5,1) to (12.5,0);
            \node (a) at (5.75,0) {\scalebox{.5}{$\ldots$}};
            \node (b) at (5.75,-2) {\scalebox{.5}{$\ldots$}};
            \node (c) at (5.75,-3) {\scalebox{.5}{$\ldots$}};
      \end{tikzpicture}} & \bigsqcup &
      \raisebox{-.5\height}{\begin{tikzpicture}[scale=.25]
            \node (0) at (-.5,-1) {$x$};
            \node (1) at (5.5,2) {$\Mb$};
            \filldraw (0,0) circle (2pt);
            \filldraw (1,1) circle (2pt);
            \filldraw (2,1) circle (2pt);
            \filldraw (3,0) circle (2pt);
            \filldraw (4,-1) circle (2pt);
            \filldraw (5,-2) circle (2pt);
            \filldraw (1,0) circle (2pt);
            \filldraw (2,0) circle (2pt);
            \filldraw (3,0) circle (2pt);
            \filldraw (4,0) circle (2pt);
            \filldraw (5,0) circle (2pt);
            \filldraw (6.5,-3) circle (2pt);
            \filldraw (7.5,-2) circle (2pt);
            \filldraw (8.5,-2) circle (2pt);
            \filldraw (9.5,-1) circle (2pt);
            \filldraw (10.5,0) circle (2pt);
            \filldraw (11.5,1) circle (2pt);
            \filldraw (12.5,0) circle (2pt);
            \filldraw (11.5,1) circle (2pt);
            \filldraw (10.5,0) circle (2pt);
            \filldraw (9.5,0) circle (2pt);
            \filldraw (8.5,0) circle (2pt);
            \filldraw (7.5,0) circle (2pt);
            \filldraw (6.5,0) circle (2pt);
            \draw (0,0) to (1,1) to (2,1) to (3,0) to (4,-1) to (5,-2);
            \draw (-1,0) to (5,0);
            \draw (6.5,0) to (12.5,0);
            \draw (6.5,-3) to (7.5,-2) to (8.5,-2) to (9.5,-1) to (10.5,0) to (11.5,1) to (12.5,0);
            \node (a) at (5.75,0) {\scalebox{.5}{$\ldots$}};
            \node (b) at (5.75,-2) {\scalebox{.5}{$\ldots$}};
            \node (c) at (5.75,-3) {\scalebox{.5}{$\ldots$}};
      \end{tikzpicture}} \\ & & & \bigsqcup &
      2\times \raisebox{-.5\height}{
      \begin{tikzpicture}[scale=.125]
            \draw[thick] (-1,-1) to (0,0);
            \draw[thick] (12.5,0) to (13.5,-1);
            \node (1) at (5.5,-3) {$\M$};
            \node (2) at (19,1) {$\Mb$};
            \node (3) at (-.5,-3) {$ux$};
            \node (4) at (12.5,-3) {$x$};
            \filldraw (0,0) circle (2pt);
            \filldraw (1,1) circle (2pt);
            \filldraw (2,2) circle (2pt);
            \filldraw (3,1) circle (2pt);
            \filldraw (4,1) circle (2pt);
            \filldraw (5,2) circle (2pt);
            \filldraw (1,0) circle (2pt);
            \filldraw (2,0) circle (2pt);
            \filldraw (3,0) circle (2pt);
            \filldraw (4,0) circle (2pt);
            \filldraw (5,0) circle (2pt);
            \filldraw (6.5,3) circle (2pt);
            \filldraw (7.5,2) circle (2pt);
            \filldraw (8.5,2) circle (2pt);
            \filldraw (9.5,1) circle (2pt);
            \filldraw (10.5,2) circle (2pt);
            \filldraw (11.5,1) circle (2pt);
            \filldraw (12.5,0) circle (2pt);
            \filldraw (11.5,0) circle (2pt);
            \filldraw (10.5,0) circle (2pt);
            \filldraw (9.5,0) circle (2pt);
            \filldraw (8.5,0) circle (2pt);
            \filldraw (7.5,0) circle (2pt);
            \filldraw (6.5,0) circle (2pt);
            \draw (0,0) to (1,1) to (2,2) to (3,1) to (4,1) to (5,2);
            \draw (0,0) to (5,0);
            \draw (6.5,0) to (12.5,0);
            \draw (6.5,3) to (7.5,2) to (8.5,2) to (9.5,1) to (10.5,2) to (11.5,1) to (12.5,0);
            \node (a) at (5.75,0) {\scalebox{.5}{$\ldots$}};
            \node (b) at (5.75,2) {\scalebox{.5}{$\ldots$}};
            \node (c) at (5.75,3) {\scalebox{.5}{$\ldots$}};
            \filldraw (13.5,-1) circle (2pt);
            \filldraw (14.5,0) circle (2pt);
            \filldraw (15.5,0) circle (2pt);
            \filldraw (16.5,-1) circle (2pt);
            \filldraw (17.5,-2) circle (2pt);
            \filldraw (18.5,-3) circle (2pt);
            \filldraw (14.5,-1) circle (2pt);
            \filldraw (15.5,-1) circle (2pt);
            \filldraw (16.5,-1) circle (2pt);
            \filldraw (17.5,-1) circle (2pt);
            \filldraw (18.5,-1) circle (2pt);
            \filldraw (20,-4) circle (2pt);
            \filldraw (21,-3) circle (2pt);
            \filldraw (22,-3) circle (2pt);
            \filldraw (23,-2) circle (2pt);
            \filldraw (24,-1) circle (2pt);
            \filldraw (25,0) circle (2pt);
            \filldraw (26,-1) circle (2pt);
            \filldraw (25,0) circle (2pt);
            \filldraw (25,-1) circle (2pt);
            \filldraw (23,-1) circle (2pt);
            \filldraw (22,-1) circle (2pt);
            \filldraw (21,-1) circle (2pt);
            \filldraw (20,-1) circle (2pt);
            \draw (13.5,-1) to (14.5,0) to (15.5,0) to (16.5,-1) to (17.5,-2) to (18.5,-3);
            \draw (13.5,-1) to (18.5,-1);
            \draw (20,-1) to (26,-1);
            \draw (20,-4) to (21,-3) to (22,-3) to (23,-2) to (24,-1) to (25,0) to (26,-1);
            \node (a) at (19.25,-1) {\scalebox{.5}{$\ldots$}};
            \node (b) at (19.25,-3) {\scalebox{.5}{$\ldots$}};
            \node (c) at (19.25,-4) {\scalebox{.5}{$\ldots$}};
      \end{tikzpicture}
      }
\end{array}$}

\noindent\rule{\textwidth}{1pt}

\scalebox{.8}{
$\begin{array}{ccccccccc}
 \raisebox{-.5\height}{\begin{tikzpicture}[scale=.25]
 \filldraw[lightgray,fill=lightgray] (0,0) to (1,1) to (2,1) to (3,0) to (0,0);
            \filldraw[lightgray,fill=lightgray] (3,0) to (5,0) to (5,-2) to (3,0);
            \filldraw[lightgray,fill=lightgray] (6.5,0) to (10.5,0) to (8.5,-2) to (7.5,-2) to (6.5,-3) to (6.5,0);
            \filldraw[lightgray,fill=lightgray] (10.5,0) to (11.5,1) to (12.5,0) to (10.5,0);
            \node (1) at (5.5,2) {$\Mbm$};
            \filldraw (0,0) circle (2pt);
            \filldraw (1,1) circle (2pt);
            \filldraw (2,1) circle (2pt);
            \filldraw (3,0) circle (2pt);
            \filldraw (4,-1) circle (2pt);
            \filldraw (5,-2) circle (2pt);
            \filldraw (1,0) circle (2pt);
            \filldraw (2,0) circle (2pt);
            \filldraw (3,0) circle (2pt);
            \filldraw (4,0) circle (2pt);
            \filldraw (5,0) circle (2pt);
            \filldraw (6.5,-3) circle (2pt);
            \filldraw (7.5,-2) circle (2pt);
            \filldraw (8.5,-2) circle (2pt);
            \filldraw (9.5,-1) circle (2pt);
            \filldraw (10.5,0) circle (2pt);
            \filldraw (11.5,1) circle (2pt);
            \filldraw (12.5,0) circle (2pt);
            \filldraw (11.5,1) circle (2pt);
            \filldraw (10.5,0) circle (2pt);
            \filldraw (9.5,0) circle (2pt);
            \filldraw (8.5,0) circle (2pt);
            \filldraw (7.5,0) circle (2pt);
            \filldraw (6.5,0) circle (2pt);
            \draw (0,0) to (1,1) to (2,1) to (3,0) to (4,-1) to (5,-2);
            \draw (0,0) to (5,0);
            \draw (6.5,0) to (12.5,0);
            \draw (6.5,-3) to (7.5,-2) to (8.5,-2) to (9.5,-1) to (10.5,0) to (11.5,1) to (12.5,0);
            \node (a) at (5.75,0) {\scalebox{.5}{$\ldots$}};
            \node (b) at (5.75,-2) {\scalebox{.5}{$\ldots$}};
            \node (c) at (5.75,-3) {\scalebox{.5}{$\ldots$}};
      \end{tikzpicture}} & = &
\raisebox{-.5\height}{\begin{tikzpicture}[scale=.25]
\filldraw[lightgray,fill=lightgray] (0,0) to (1,1) to (2,1) to (3,0) to (0,0);
            \filldraw[lightgray,fill=lightgray] (3,0) to (5,0) to (5,-2) to (3,0);
            \filldraw[lightgray,fill=lightgray] (6.5,0) to (10.5,0) to (8.5,-2) to (7.5,-2) to (6.5,-3) to (6.5,0);
            \filldraw[lightgray,fill=lightgray] (10.5,0) to (11.5,1) to (12.5,0) to (10.5,0);
            \filldraw (-1,0) circle (2pt);
            \node (0) at (-.5,-1) {$ux$};
            \node (1) at (5.5,2) {$\Mbm$};
            \filldraw (0,0) circle (2pt);
            \filldraw (1,1) circle (2pt);
            \filldraw (2,1) circle (2pt);
            \filldraw (3,0) circle (2pt);
            \filldraw (4,-1) circle (2pt);
            \filldraw (5,-2) circle (2pt);
            \filldraw (1,0) circle (2pt);
            \filldraw (2,0) circle (2pt);
            \filldraw (3,0) circle (2pt);
            \filldraw (4,0) circle (2pt);
            \filldraw (5,0) circle (2pt);
            \filldraw (6.5,-3) circle (2pt);
            \filldraw (7.5,-2) circle (2pt);
            \filldraw (8.5,-2) circle (2pt);
            \filldraw (9.5,-1) circle (2pt);
            \filldraw (10.5,0) circle (2pt);
            \filldraw (11.5,1) circle (2pt);
            \filldraw (12.5,0) circle (2pt);
            \filldraw (11.5,1) circle (2pt);
            \filldraw (10.5,0) circle (2pt);
            \filldraw (9.5,0) circle (2pt);
            \filldraw (8.5,0) circle (2pt);
            \filldraw (7.5,0) circle (2pt);
            \filldraw (6.5,0) circle (2pt);
            \draw (0,0) to (1,1) to (2,1) to (3,0) to (4,-1) to (5,-2);
            \draw (-1,0) to (5,0);
            \draw (6.5,0) to (12.5,0);
            \draw (6.5,-3) to (7.5,-2) to (8.5,-2) to (9.5,-1) to (10.5,0) to (11.5,1) to (12.5,0);
            \node (a) at (5.75,0) {\scalebox{.5}{$\ldots$}};
            \node (b) at (5.75,-2) {\scalebox{.5}{$\ldots$}};
            \node (c) at (5.75,-3) {\scalebox{.5}{$\ldots$}};
      \end{tikzpicture}} & \bigsqcup &
      \raisebox{-.5\height}{\begin{tikzpicture}[scale=.25]
      \filldraw[lightgray,fill=lightgray] (0,0) to (1,1) to (2,1) to (3,0) to (0,0);
            \filldraw[lightgray,fill=lightgray] (3,0) to (5,0) to (5,-2) to (3,0);
            \filldraw[lightgray,fill=lightgray] (6.5,0) to (10.5,0) to (8.5,-2) to (7.5,-2) to (6.5,-3) to (6.5,0);
            \filldraw[lightgray,fill=lightgray] (10.5,0) to (11.5,1) to (12.5,0) to (10.5,0);
            \node (0) at (-.5,-1) {$x$};
            \node (1) at (5.5,2) {$\Mbm$};
            \filldraw (0,0) circle (2pt);
            \filldraw (1,1) circle (2pt);
            \filldraw (2,1) circle (2pt);
            \filldraw (3,0) circle (2pt);
            \filldraw (4,-1) circle (2pt);
            \filldraw (5,-2) circle (2pt);
            \filldraw (1,0) circle (2pt);
            \filldraw (2,0) circle (2pt);
            \filldraw (3,0) circle (2pt);
            \filldraw (4,0) circle (2pt);
            \filldraw (5,0) circle (2pt);
            \filldraw (6.5,-3) circle (2pt);
            \filldraw (7.5,-2) circle (2pt);
            \filldraw (8.5,-2) circle (2pt);
            \filldraw (9.5,-1) circle (2pt);
            \filldraw (10.5,0) circle (2pt);
            \filldraw (11.5,1) circle (2pt);
            \filldraw (12.5,0) circle (2pt);
            \filldraw (11.5,1) circle (2pt);
            \filldraw (10.5,0) circle (2pt);
            \filldraw (9.5,0) circle (2pt);
            \filldraw (8.5,0) circle (2pt);
            \filldraw (7.5,0) circle (2pt);
            \filldraw (6.5,0) circle (2pt);
            \draw (0,0) to (1,1) to (2,1) to (3,0) to (4,-1) to (5,-2);
            \draw (-1,0) to (5,0);
            \draw (6.5,0) to (12.5,0);
            \draw (6.5,-3) to (7.5,-2) to (8.5,-2) to (9.5,-1) to (10.5,0) to (11.5,1) to (12.5,0);
            \node (a) at (5.75,0) {\scalebox{.5}{$\ldots$}};
            \node (b) at (5.75,-2) {\scalebox{.5}{$\ldots$}};
            \node (c) at (5.75,-3) {\scalebox{.5}{$\ldots$}};
      \end{tikzpicture}} & & & &\\
      & \bigsqcup &
      2\times \Bigg( \raisebox{-.5\height}{
      \begin{tikzpicture}[scale=.125]
      \filldraw[lightgray,fill=lightgray] (0,0) to (1,1) to (2,2) to (3,1) to (4,1) to (5,2) to (5,0) to (0,0);
            \filldraw[lightgray,fill=lightgray] (6.5,3) to (7.5,2) to (8.5,2) to (9.5,1) to (10.5,2) to (11.5,1) to (12.5,0) to (6.5,0) to (6.5,3);
            \draw[thick] (-1,-1) to (0,0);
            \draw[thick] (12.5,0) to (13.5,-1);
            \node (1) at (5.5,-3) {$\Mm$};
            \node (2) at (19,1) {$\Mb$};
            \node (3) at (-.5,-3) {$ux$};
            \node (4) at (12.5,-3) {$x$};
            \filldraw (0,0) circle (2pt);
            \filldraw (1,1) circle (2pt);
            \filldraw (2,2) circle (2pt);
            \filldraw (3,1) circle (2pt);
            \filldraw (4,1) circle (2pt);
            \filldraw (5,2) circle (2pt);
            \filldraw (1,0) circle (2pt);
            \filldraw (2,0) circle (2pt);
            \filldraw (3,0) circle (2pt);
            \filldraw (4,0) circle (2pt);
            \filldraw (5,0) circle (2pt);
            \filldraw (6.5,3) circle (2pt);
            \filldraw (7.5,2) circle (2pt);
            \filldraw (8.5,2) circle (2pt);
            \filldraw (9.5,1) circle (2pt);
            \filldraw (10.5,2) circle (2pt);
            \filldraw (11.5,1) circle (2pt);
            \filldraw (12.5,0) circle (2pt);
            \filldraw (11.5,0) circle (2pt);
            \filldraw (10.5,0) circle (2pt);
            \filldraw (9.5,0) circle (2pt);
            \filldraw (8.5,0) circle (2pt);
            \filldraw (7.5,0) circle (2pt);
            \filldraw (6.5,0) circle (2pt);
            \draw (0,0) to (1,1) to (2,2) to (3,1) to (4,1) to (5,2);
            \draw (0,0) to (5,0);
            \draw (6.5,0) to (12.5,0);
            \draw (6.5,3) to (7.5,2) to (8.5,2) to (9.5,1) to (10.5,2) to (11.5,1) to (12.5,0);
            \node (a) at (5.75,0) {\scalebox{.5}{$\ldots$}};
            \node (b) at (5.75,2) {\scalebox{.5}{$\ldots$}};
            \node (c) at (5.75,3) {\scalebox{.5}{$\ldots$}};
            \filldraw (13.5,-1) circle (2pt);
            \filldraw (14.5,0) circle (2pt);
            \filldraw (15.5,0) circle (2pt);
            \filldraw (16.5,-1) circle (2pt);
            \filldraw (17.5,-2) circle (2pt);
            \filldraw (18.5,-3) circle (2pt);
            \filldraw (14.5,-1) circle (2pt);
            \filldraw (15.5,-1) circle (2pt);
            \filldraw (16.5,-1) circle (2pt);
            \filldraw (17.5,-1) circle (2pt);
            \filldraw (18.5,-1) circle (2pt);
            \filldraw (20,-4) circle (2pt);
            \filldraw (21,-3) circle (2pt);
            \filldraw (22,-3) circle (2pt);
            \filldraw (23,-2) circle (2pt);
            \filldraw (24,-1) circle (2pt);
            \filldraw (25,0) circle (2pt);
            \filldraw (26,-1) circle (2pt);
            \filldraw (25,0) circle (2pt);
            \filldraw (25,-1) circle (2pt);
            \filldraw (23,-1) circle (2pt);
            \filldraw (22,-1) circle (2pt);
            \filldraw (21,-1) circle (2pt);
            \filldraw (20,-1) circle (2pt);
            \draw (13.5,-1) to (14.5,0) to (15.5,0) to (16.5,-1) to (17.5,-2) to (18.5,-3);
            \draw (13.5,-1) to (18.5,-1);
            \draw (20,-1) to (26,-1);
            \draw (20,-4) to (21,-3) to (22,-3) to (23,-2) to (24,-1) to (25,0) to (26,-1);
            \node (a) at (19.25,-1) {\scalebox{.5}{$\ldots$}};
            \node (b) at (19.25,-3) {\scalebox{.5}{$\ldots$}};
            \node (c) at (19.25,-4) {\scalebox{.5}{$\ldots$}};
      \end{tikzpicture}
      } & \bigsqcup & \raisebox{-.5\height}{
      \begin{tikzpicture}[scale=.125]
      \filldraw[lightgray,fill=lightgray] (13.5,-1) to (14.5,0) to (15.5,0) to (16.5,-1) to (13.5,-1);
            \filldraw[lightgray,fill=lightgray] (16.5,-1) to (18.5,-1) to (18.5,-3) to (16.5,-1);
            \filldraw[lightgray,fill=lightgray] (20,-1) to (24,-1) to (22,-3) to (21,-3) to (20,-4) to (20,-1);
            \filldraw[lightgray,fill=lightgray] (24,-1) to (25,0) to (26,-1) to (24,-1);
            \draw[thick] (-1,-1) to (0,0);
            \draw[thick] (12.5,0) to (13.5,-1);
            \node (1) at (5.5,-3) {$\M$};
            \node (2) at (19,1) {$\Mbm$};
            \node (3) at (-.5,-3) {$ux$};
            \node (4) at (12.5,-3) {$x$};
            \filldraw (0,0) circle (2pt);
            \filldraw (1,1) circle (2pt);
            \filldraw (2,2) circle (2pt);
            \filldraw (3,1) circle (2pt);
            \filldraw (4,1) circle (2pt);
            \filldraw (5,2) circle (2pt);
            \filldraw (1,0) circle (2pt);
            \filldraw (2,0) circle (2pt);
            \filldraw (3,0) circle (2pt);
            \filldraw (4,0) circle (2pt);
            \filldraw (5,0) circle (2pt);
            \filldraw (6.5,3) circle (2pt);
            \filldraw (7.5,2) circle (2pt);
            \filldraw (8.5,2) circle (2pt);
            \filldraw (9.5,1) circle (2pt);
            \filldraw (10.5,2) circle (2pt);
            \filldraw (11.5,1) circle (2pt);
            \filldraw (12.5,0) circle (2pt);
            \filldraw (11.5,0) circle (2pt);
            \filldraw (10.5,0) circle (2pt);
            \filldraw (9.5,0) circle (2pt);
            \filldraw (8.5,0) circle (2pt);
            \filldraw (7.5,0) circle (2pt);
            \filldraw (6.5,0) circle (2pt);
            \draw (0,0) to (1,1) to (2,2) to (3,1) to (4,1) to (5,2);
            \draw (0,0) to (5,0);
            \draw (6.5,0) to (12.5,0);
            \draw (6.5,3) to (7.5,2) to (8.5,2) to (9.5,1) to (10.5,2) to (11.5,1) to (12.5,0);
            \node (a) at (5.75,0) {\scalebox{.5}{$\ldots$}};
            \node (b) at (5.75,2) {\scalebox{.5}{$\ldots$}};
            \node (c) at (5.75,3) {\scalebox{.5}{$\ldots$}};
            \filldraw (13.5,-1) circle (2pt);
            \filldraw (14.5,0) circle (2pt);
            \filldraw (15.5,0) circle (2pt);
            \filldraw (16.5,-1) circle (2pt);
            \filldraw (17.5,-2) circle (2pt);
            \filldraw (18.5,-3) circle (2pt);
            \filldraw (14.5,-1) circle (2pt);
            \filldraw (15.5,-1) circle (2pt);
            \filldraw (16.5,-1) circle (2pt);
            \filldraw (17.5,-1) circle (2pt);
            \filldraw (18.5,-1) circle (2pt);
            \filldraw (20,-4) circle (2pt);
            \filldraw (21,-3) circle (2pt);
            \filldraw (22,-3) circle (2pt);
            \filldraw (23,-2) circle (2pt);
            \filldraw (24,-1) circle (2pt);
            \filldraw (25,0) circle (2pt);
            \filldraw (26,-1) circle (2pt);
            \filldraw (25,0) circle (2pt);
            \filldraw (25,-1) circle (2pt);
            \filldraw (23,-1) circle (2pt);
            \filldraw (22,-1) circle (2pt);
            \filldraw (21,-1) circle (2pt);
            \filldraw (20,-1) circle (2pt);
            \draw (13.5,-1) to (14.5,0) to (15.5,0) to (16.5,-1) to (17.5,-2) to (18.5,-3);
            \draw (13.5,-1) to (18.5,-1);
            \draw (20,-1) to (26,-1);
            \draw (20,-4) to (21,-3) to (22,-3) to (23,-2) to (24,-1) to (25,0) to (26,-1);
            \node (a) at (19.25,-1) {\scalebox{.5}{$\ldots$}};
            \node (b) at (19.25,-3) {\scalebox{.5}{$\ldots$}};
            \node (c) at (19.25,-4) {\scalebox{.5}{$\ldots$}};
      \end{tikzpicture}
      }\\ & &  & \bigsqcup & \raisebox{-.5\height}{
      \begin{tikzpicture}[scale=.125]
            \filldraw[lightgray,fill=lightgray] (-1,-1) to (0,0) to (12.5,0) to (13.5,-1) to (-1,-1);
            \draw[thick] (-1,-1) to (0,0);
            \draw[thick] (12.5,0) to (13.5,-1);
            \node (1) at (5.5,-4) {\scalebox{0.8}{$\partial_x(x\M)$}};
            \node (3) at (-.5,-3) {$ux$};
            \node (4) at (12.5,-3) {$x$};
            \node (2) at (19,1) {$\Mb$};
            \filldraw (0,0) circle (2pt);
            \filldraw (1,1) circle (2pt);
            \filldraw (2,2) circle (2pt);
            \filldraw (3,1) circle (2pt);
            \filldraw (4,1) circle (2pt);
            \filldraw (5,2) circle (2pt);
            \filldraw (1,0) circle (2pt);
            \filldraw (2,0) circle (2pt);
            \filldraw (3,0) circle (2pt);
            \filldraw (4,0) circle (2pt);
            \filldraw (5,0) circle (2pt);
            \filldraw (6.5,3) circle (2pt);
            \filldraw (7.5,2) circle (2pt);
            \filldraw (8.5,2) circle (2pt);
            \filldraw (9.5,1) circle (2pt);
            \filldraw (10.5,2) circle (2pt);
            \filldraw (11.5,1) circle (2pt);
            \filldraw (12.5,0) circle (2pt);
            \filldraw (11.5,0) circle (2pt);
            \filldraw (10.5,0) circle (2pt);
            \filldraw (9.5,0) circle (2pt);
            \filldraw (8.5,0) circle (2pt);
            \filldraw (7.5,0) circle (2pt);
            \filldraw (6.5,0) circle (2pt);
            \draw (0,0) to (1,1) to (2,2) to (3,1) to (4,1) to (5,2);
            \draw (0,0) to (5,0);
            \draw (6.5,0) to (12.5,0);
            \draw (6.5,3) to (7.5,2) to (8.5,2) to (9.5,1) to (10.5,2) to (11.5,1) to (12.5,0);
            \node (a) at (5.75,0) {\scalebox{.5}{$\ldots$}};
            \node (b) at (5.75,2) {\scalebox{.5}{$\ldots$}};
            \node (c) at (5.75,3) {\scalebox{.5}{$\ldots$}};
            \filldraw (13.5,-1) circle (2pt);
            \filldraw (14.5,0) circle (2pt);
            \filldraw (15.5,0) circle (2pt);
            \filldraw (16.5,-1) circle (2pt);
            \filldraw (17.5,-2) circle (2pt);
            \filldraw (18.5,-3) circle (2pt);
            \filldraw (14.5,-1) circle (2pt);
            \filldraw (15.5,-1) circle (2pt);
            \filldraw (16.5,-1) circle (2pt);
            \filldraw (17.5,-1) circle (2pt);
            \filldraw (18.5,-1) circle (2pt);
            \filldraw (20,-4) circle (2pt);
            \filldraw (21,-3) circle (2pt);
            \filldraw (22,-3) circle (2pt);
            \filldraw (23,-2) circle (2pt);
            \filldraw (24,-1) circle (2pt);
            \filldraw (25,0) circle (2pt);
            \filldraw (26,-1) circle (2pt);
            \filldraw (25,0) circle (2pt);
            \filldraw (25,-1) circle (2pt);
            \filldraw (23,-1) circle (2pt);
            \filldraw (22,-1) circle (2pt);
            \filldraw (21,-1) circle (2pt);
            \filldraw (20,-1) circle (2pt);
            \draw (13.5,-1) to (14.5,0) to (15.5,0) to (16.5,-1) to (17.5,-2) to (18.5,-3);
            \draw (13.5,-1) to (18.5,-1);
            \draw (20,-1) to (26,-1);
            \draw (20,-4) to (21,-3) to (22,-3) to (23,-2) to (24,-1) to (25,0) to (26,-1);
            \node (a) at (19.25,-1) {\scalebox{.5}{$\ldots$}};
            \node (b) at (19.25,-3) {\scalebox{.5}{$\ldots$}};
            \node (c) at (19.25,-4) {\scalebox{.5}{$\ldots$}};
      \end{tikzpicture}
      }  \Bigg)
\end{array}$}

      \caption{The decompositions of lattice paths in $\M$, $\Mm$, $\Mb$, and $\Mbm$ from~\Cref{prop:m_decomp},~\Cref{prop:mm_decomp},~\Cref{prop:mb_decomp}, and~\Cref{prop:mbm_decomp}.  The multiplication by 2 comes from the symmetry of reflecting the first factor across the $x$-axis.}
      \label{fig:m_decomp}
\end{figure}

Define the generating function for the total area of paths in $\M$ by \[\Mm(x,u) := \sum_{n \geq 0 } \sum_{k \geq 0} x^n u^k \sum_{p \in \M_{n,k}} \d(p).\]

\begin{proposition} \label{prop:mm_decomp}
      The generating function $\Mm(x,u)$ satisfies the functional equation
            \[\Mm(x,u)=x(1+u)\Mm(x,u)+ux^2(2\M(x,u)\Mm(x,u)+\M(x,u) \frac{d}{dx}(x\M(x,u)),\]
      with the explicit solution
            \[\Mm(x,u) = \frac{\left(u^2+1\right) x^2-(u+1) x+((u +1)x-1) \left(\sqrt{(u-1)^2 x^2-2 (u+1) x+1}-1\right)}{2 u x^2 \left((u-1)^2 x^2-2 (u+1) x+1\right)}.\]
\end{proposition}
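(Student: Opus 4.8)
The plan is to reproduce, for the area generating function $\Mm$, the first-return decomposition already used for $\M$ in~\Cref{prop:m_decomp}, now also keeping track of the statistic $\d$. Give each bicolored Motzkin path $r$ the weight $w(r) = x^{|r|}u^{k(r)}$, where $|r|$ is its length and $k(r)$ its number of $U$ and $O_1$ steps, so that $\M = \sum_r w(r)$ and $\Mm = \sum_r w(r)\,\d(r)$. A nonempty path is exactly one of: a path beginning with $O_1$; a path beginning with $O_2$; or a path beginning with $U$, in which case it factors uniquely as $r = U\,r'\,D\,r''$, where $U\,r'\,D$ is the portion up to and including the first return to the $x$-axis and $r''\in\M$. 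Here $r'$ is a bicolored Motzkin path traced one level above the axis; let $p'\in\M$ be the same path read at the level of the axis and put $\ell=|p'|$. This is the decomposition drawn in the second block of~\Cref{fig:m_decomp}.

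The crux is how $\d$ transforms under these operations. Prepending an $O_1$ or an $O_2$ step translates the path horizontally without changing any height, so $\d$ is unchanged; on generating functions this is multiplication by $ux$, resp.\ by $x$. The arch $U\,r'\,D$ has length $\ell+2$, and its vertices in positions $1,\dots,\ell+1$ each sit one unit above the corresponding vertex of $p'$ (its two extreme vertices resting on the axis); hence $\d(U\,r'\,D)=(\ell+1)+\d(p')$, and, since $r''$ rests on the axis after the arch, $\d(r)=(\ell+1)+\d(p')+\d(r'')$. Summing $w(r)\,\d(r)$ over all $r$ beginning with $U$ and using $w(r)=ux^2\,w(p')\,w(r'')$, the contributions of the terms $\d(p')$ and $\d(r'')$ give $ux^2\bigl(\Mm\,\M+\M\,\Mm\bigr)=2ux^2\,\M\,\Mm$, while the term $\ell+1$ gives $ux^2\,\M\sum_{p'}(|p'|+1)\,w(p')=ux^2\,\M\,\frac{d}{dx}(x\M)$, because $\frac{d}{dx}(x\M)=\sum_{p}(|p|+1)\,w(p)$. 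Adding the empty case (contributing $0$) to the three nonempty cases produces exactly the stated functional equation.

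For the closed form, collect the $\Mm$ terms: $\Mm\bigl(1-x(1+u)-2ux^2\M\bigr)=ux^2\,\M\,\frac{d}{dx}(x\M)$. From~\Cref{prop:m_decomp} we have $\M\bigl(1-x(1+u)-ux^2\M\bigr)=1$, so $1-x(1+u)-ux^2\M=\M^{-1}$ and therefore $1-x(1+u)-2ux^2\M=\M^{-1}-ux^2\M=\M^{-1}\bigl(1-ux^2\M^2\bigr)$. Substituting the explicit expression for $\M$ and setting $\Delta:=\sqrt{(u-1)^2x^2-2(u+1)x+1}$ (so $\Delta^2=(u-1)^2x^2-2(u+1)x+1$ and $\M=\bigl(1-(u+1)x-\Delta\bigr)/(2ux^2)$), a short computation yields the two identities $1-ux^2\M^2=\Delta\,\M$ and $\frac{d}{dx}(x\M)=\M/\Delta$. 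Combining them gives $\Mm=ux^2\M^2/\Delta^2$, and expanding $ux^2\M^2$ and $\Delta^2$ and simplifying produces exactly the formula in the statement. (Equivalently, one may simply substitute the expression for $\M$ and simplify, e.g.\ with a computer algebra system.)

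I expect the main obstacle to be the area bookkeeping in the $U$-case: one must see that lifting a subpath of length $\ell$ by one level inside an arch increases its area by $\ell+1$ rather than $\ell$, and recognize that this extra $+1$ is exactly what forces $\frac{d}{dx}(x\M)$---rather than $\M$ or $x\M$---to appear in the functional equation. Everything afterwards (the two auxiliary identities and the final simplification) is elementary algebra with the quadratic satisfied by $\M$, and can be sanity-checked against the first terms $\Mm=ux^2+4u(1+u)x^3+\cdots$.
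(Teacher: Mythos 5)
Your proposal is correct and follows the same route as the paper: a first-return decomposition of bicolored Motzkin paths weighted by area, with the $\frac{d}{dx}(x\M)$ term accounting for the extra area $\ell+1$ contributed by lifting the arch's interior subpath, followed by solving the resulting linear equation for $\Mm$ using the explicit form of $\M$. The paper states this only in outline (referring to Figure 4), so your added bookkeeping of $\d(Ur'D)=(\ell+1)+\d(p')$ and the identities $1-ux^2\M^2=\Delta\M$, $\frac{d}{dx}(x\M)=\M/\Delta$ are a welcome, and correct, elaboration of the same argument.
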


\begin{proof}
      As in~\Cref{prop:m_decomp}, the functional equation comes from decomposing a lattice path $r$ in $\Mm$ by first return to the $x$-axis: either $r$ is empty (in which case it contributes zero area); or $r$ starts with an $O_1$ step; or $r$ starts with an $O_2$ step; or $r$ starts with a $U$ step.  This is illustrated in~\Cref{fig:m_decomp}.  The explicit solution is easily obtained by using the explicit form of $\M(x,u)$ from~\Cref{prop:m_decomp} and solving the linear equation in $\Mm(x,u)$.
\end{proof}





\subsection{Bilateral Motzkin paths}

Define the generating function for bilateral Motzkin paths to be
      \[\Mb(x,u) = \sum_{n \geq 0 } \sum_{k \geq 0} |\Mb_{n,k}| x^{n} u^k,\]
so that the coefficient of $x^n u^k$ counts bilateral Motzkin paths of total length $n$ with exactly $k$ steps of the form $U$ or $O_1$.

\begin{proposition}\label{prop:mb_decomp}
      The generating function $\Mb(x,u)$ satisfies the functional equation
            \[\Mb(x,u)=1+x(1+u)\Mb(x,u)+2u x^2 \M(x,u) \Mb(x,u),\]
      with the explicit solution
      \[\Mb(x,u) = (u^2x^2 - 2ux^2 - 2ux + x^2 - 2x + 1)^{-1/2}.\]
\end{proposition}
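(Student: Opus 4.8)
The plan mirrors the proof of~\Cref{prop:m_decomp}: first derive the functional equation by a first‑return decomposition, then solve the (now linear) equation for $\Mb(x,u)$ using the explicit form of $\M(x,u)$ already established.

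For the functional equation, I would decompose a nonempty path $r\in\Mb$ by its first step together with, when that step is $U$ or $D$, the step realizing its first return to the $x$-axis; this is the decomposition drawn in the third block of~\Cref{fig:m_decomp}. If $r$ starts with $O_1$ (weight $ux$) or with $O_2$ (weight $x$), deleting that step leaves an arbitrary element of $\Mb$, contributing $x(1+u)\Mb(x,u)$ in total. If $r$ starts with $U$, then, since $r$ terminates at height $0$, it returns to height $0$, and the step producing the first such return is forced to be a $D$ (from height $1$); the portion strictly between the initial $U$ and this $D$ stays weakly above height $1$, hence is, after a downward shift by $1$, an arbitrary bicolored Motzkin path, while what follows the closing $D$ is an arbitrary element of $\Mb$, so this case contributes $ux^2\M(x,u)\Mb(x,u)$. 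The case where $r$ starts with $D$ is the mirror image: the initial $D$ (weight $x$) is closed by a $U$ (weight $ux$) at the first return, and the excursion in between, which stays weakly below $-1$, becomes a bicolored Motzkin path after reflection across the $x$-axis and a shift by $1$. Since such an excursion has the same number of $U$- and $D$-steps, the reflection preserves the number of steps of the form $U$ or $O_1$, so the middle factor is again $\M(x,u)$, and this case also contributes $ux^2\M(x,u)\Mb(x,u)$. Adding the empty path and all four cases yields
\[\Mb(x,u)=1+x(1+u)\Mb(x,u)+2ux^2\M(x,u)\Mb(x,u).\]

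Next I would solve this linear equation, obtaining
\[\Mb(x,u)=\frac{1}{1-x(1+u)-2ux^2\M(x,u)},\]
which is a well-defined element of $\mathbb{Z}[u][[x]]$ since the denominator has $x$-constant term $1$. Then substitute the closed form from~\Cref{prop:m_decomp}: writing $\Delta:=\sqrt{(u-1)^2x^2-2(u+1)x+1}$ and using $u^2-2u+1=(u-1)^2$, that proposition gives $2ux^2\M(x,u)=1-(u+1)x-\Delta$, so the denominator above telescopes to exactly $\Delta$, whence $\Mb(x,u)=\Delta^{-1}$. Expanding $(u-1)^2x^2-2(u+1)x+1=u^2x^2-2ux^2-2ux+x^2-2x+1$ gives the asserted formula, with the branch of the square root pinned down by the initial condition $\Mb(0,u)=1$ (the empty path is the unique path of length $0$).

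The computation is routine; the only step needing a moment of care is the ``$r$ starts with $D$'' case, where one must verify that reflecting the excursion across the $x$-axis does not alter the exponent of $u$. This is exactly the remark that an excursion returning to its starting height has equally many up- and down-steps, so interchanging $U\leftrightarrow D$ leaves the count of ($U$ or $O_1$)-steps unchanged; it is the source of the factor $2$ indicated in~\Cref{fig:m_decomp}.
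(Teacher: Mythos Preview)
Your proof is correct and follows essentially the same approach as the paper's: a first-return decomposition yielding the stated functional equation, followed by substituting the closed form of $\M(x,u)$ from \Cref{prop:m_decomp} and solving the resulting linear equation. Your write-up is in fact more detailed than the paper's, which simply asserts the decomposition and the routine algebra; in particular, your explicit check that reflecting the below-axis excursion preserves the $u$-weight (because the excursion has equally many $U$- and $D$-steps) and your telescoping of the denominator to $\Delta$ are both correct and make the argument self-contained.
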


\begin{proof}
      As in~\Cref{prop:m_decomp}, the functional equation comes from decomposing a lattice path $r$ in $\Mb$ by first return to the $x$-axis: $r$ is empty; or $r$ starts with an $O_1$ step; or $r$ starts with an $O_2$ step; or $r$ starts with a $U$ or $D$ step.  This is illustrated in~\Cref{fig:m_decomp}.  The explicit solution is easily obtained by using the explicit form of $\M(x,u)$ from~\Cref{prop:m_decomp} and solving the linear equation in $\Mb(x,u)$.
\end{proof}

\begin{remark}
Because $\Mb_{n,k}$ encodes $\Pt_{k,n-k}$ by Proposition~\ref{prop:bija}, we have
\begin{equation}
\label{eq:expansion_bilateral}
\Mb(x,u)=\sum_{n,k\geq 0}\binom{n}{k}^2x^nu^k.
\end{equation}
\end{remark}

Define the generating function for the total area of paths in $\Mb$ to be
      \[\Mbm(x,u) := \sum_{n \geq 0 } \sum_{k \geq 0} x^n u^k \sum_{p \in \Mb_{n,k}} \d(p).\]

\begin{proposition}\label{prop:mbm_decomp}
      The generating function $\Mbm(x,u)$ satisfies the functional equation
            \[\Mbm(x,u)=x(1+u)\Mbm(x,u)+2ux^2(\M(x,u)\Mbm(x,u)+\Mm(x,u)\Mb(x,u)+\Mb(x,u)\frac{d}{dx}(x\M(x,u)),\]
      with the explicit solution
      \begin{equation}\label{eq:total_geo_sub}
            \Mbm(x,u) = \frac{2 u x^2}{\left((u-1)^2 x^2-2 (u+1) x+1\right)^2}.
      \end{equation}
\end{proposition}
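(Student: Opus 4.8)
The plan is to run the same first-return decomposition used in \Cref{prop:m_decomp,prop:mm_decomp,prop:mb_decomp}, this time carrying the area along. Let $r\in\Mb$ be nonempty. Then $r$ begins with an $O_1$ step, or an $O_2$ step, or its first step is a $U$ or a $D$ step followed by a first return to the $x$-axis. The two horizontal cases contribute $ux\,\Mbm(x,u)$ and $x\,\Mbm(x,u)$, since the initial step sits on the axis and adds nothing to $\d(r)$. If the first step is $U$, write $r=U\cdot s\cdot D\cdot r''$ with $s\in\M$ (the interior of the arch, lowered by one) and $r''\in\Mb$; the case of a first step $D$ is handled by reflecting the arch $U\cdot s\cdot D$ across the $x$-axis, and because $s$ uses equally many $U$ and $D$ steps this reflection preserves both the length and the number of $U/O_1$ steps, so it merely doubles the contribution. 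This is the symmetry recorded in the caption of \Cref{fig:m_decomp}.

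The crux is the additivity of area. Directly from the definition of $\d$, for $r=U\cdot s\cdot D\cdot r''$ (with all sub-paths ending on the axis, so that $\d(s)=\sum_{i\ge 0}|s_i|$ etc.), one computes
\[
\d(r)=\bigl(\len(s)+1\bigr)+\d(s)+\d(r''),
\]
where the first term is the ``base layer'' of cells under the arch — there are $\len(s)+1$ columns of height at least one — the second is the area of $s$, and the third is the area of $r''$. Translating into generating functions by summing $x^{\text{length}}u^{\#\{U,O_1\}}$ over $s\in\M$ and $r''\in\Mb$, and using $\sum_{s\in\M}(\len(s)+1)x^{\len(s)}u^{\#\{U,O_1\}}=\frac{d}{dx}\bigl(x\M(x,u)\bigr)$, the three terms become $ux^2\,\frac{d}{dx}(x\M)\cdot\Mb$, $ux^2\,\Mm\cdot\Mb$, and $ux^2\,\M\cdot\Mbm$. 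Doubling (for the $D$-arch) and adding the two horizontal cases yields the asserted functional equation.

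To get the closed form, solve this equation linearly for $\Mbm$. Let $\Delta=(u-1)^2x^2-2(u+1)x+1$ be the discriminant of \Cref{prop:m_decomp}, so that $2ux^2\M=1-(u+1)x-\sqrt{\Delta}$ and hence $1-x(1+u)-2ux^2\M=\sqrt{\Delta}$, while $\Mb=\Delta^{-1/2}$ by \Cref{prop:mb_decomp}. The equation then collapses to
\[
\Mbm(x,u)=\frac{2ux^2}{\Delta}\Bigl(\Mm(x,u)+\tfrac{d}{dx}\bigl(x\M(x,u)\bigr)\Bigr),
\]
so it suffices to prove $\Mm+\frac{d}{dx}(x\M)=\Delta^{-1}$. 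I would do this without expanding the radicals: differentiating the functional equation of \Cref{prop:m_decomp} gives $x\M'\sqrt{\Delta}=\M(1-\sqrt{\Delta})$, whence $\frac{d}{dx}(x\M)=\M+x\M'=\M/\sqrt{\Delta}$; the functional equation of \Cref{prop:mm_decomp} gives $\Mm\sqrt{\Delta}=ux^2\M\,\frac{d}{dx}(x\M)=ux^2\M^2/\sqrt{\Delta}$, hence $\Mm=ux^2\M^2/\Delta$; and $(1-(u+1)x)^2-\Delta=4ux^2$ forces $ux^2\M+\sqrt{\Delta}=1/\M$. Combining, $\Mm+\frac{d}{dx}(x\M)=\M\bigl(ux^2\M+\sqrt{\Delta}\bigr)/\Delta=1/\Delta$, and \eqref{eq:total_geo_sub} follows.

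The main obstacle is the area bookkeeping in the arch case — in particular recognizing that the arch $U\cdot s\cdot D$ has exactly $\len(s)+1$ columns of positive height, which is what forces the derivative $\frac{d}{dx}(x\M)$ rather than $\M$ itself to appear, and checking that the reflection used to merge the $U$- and $D$-arch cases really does preserve the $U/O_1$ count (it does, since $s$ is balanced). Everything after the functional equation is routine generating-function algebra.
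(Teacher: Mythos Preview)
Your proof is correct and follows the same first-return decomposition as the paper, with the same area bookkeeping that produces the $\frac{d}{dx}(x\M)$ term; the paper simply refers back to \Cref{prop:mm_decomp} and \Cref{fig:m_decomp} rather than spelling this out. For the closed form the paper just says to substitute the explicit expressions for $\M$, $\Mm$, $\Mb$ and solve linearly, whereas your radical-free derivation of the identity $\Mm+\tfrac{d}{dx}(x\M)=\Delta^{-1}$ is a genuinely cleaner route to \eqref{eq:total_geo_sub} and, incidentally, makes the identity $\Mbm=2ux^2\,\Mb^4$ noted in the open problems section immediate.
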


\begin{proof}
      As in~\Cref{prop:mm_decomp}, the functional equation comes from decomposing a lattice path $r$ in $\Mbm$ by first return to the $x$-axis: if $r$ is empty, then it counts for zero area; otherwise, $r$ starts with an $O_1$ step; or $r$ starts with an $O_2$ step; or $r$ starts with a $U$ or $D$ step.  This is illustrated in~\Cref{fig:m_decomp}.  The explicit solution is easily obtained by using the explicit forms of $\M(x,u)$, $\Mb(x,u)$, and $\Mm(x,u)$ from~\Cref{prop:m_decomp,prop:mb_decomp,prop:mm_decomp} and solving the linear equation in $\Mbm(x,u)$.
\end{proof}

Substituting $u=y/x$ into~\Cref{eq:total_geo_sub}, we obtain~\Cref{eq:total_geo} and thus complete the proof of~\Cref{thm:gf} for the generating function for the Wiener indices of the lattices $\P_{m,k}=J([m]\times[k])$.





\section{Proofs of~\Cref{cor:coeff,cor:asymptotic}}\label{sec:proof_rect_cors}

We note that
\[x^2 - 2xy + y^2 - 2x - 2y + 1 = (q - t - 1) (q - t + 1) (q + t - 1) (q + t + 1),\]
where $q^2=x$ and $t^2=y$.  Then, by performing a partial fraction decomposition with a computer algebra system, we get
\begin{align} \label{eq:qt}
\nonumber \frac{2xy}{(x^2 - 2xy + y^2 - 2x - 2y + 1)^2} &=
      \frac{1}{32} \left(
            \frac{1}{(-1-t+q)^2}
            +\frac{1}{(1-t+q)^2}
            +\frac{1}{(-1+t+q)^2}
            +\frac{1}{(1+t+q)^2}\right) \\
 \nonumber     &+\frac{1}{32t(t+1)}\left(
            \frac{1+t+t^2}{(1+t-q)}
            +\frac{1+t+t^2}{(1+t+q)}
      \right)\\
      &+\frac{1}{32t(t-1)}\left(
            \frac{1-t+t^2}{(1-t+q)}
            +\frac{-1+t-t^2}{(-1+t+q)}
      \right).
\end{align}

Taking the $n$th coefficient in $q$ from the right-hand side of~\Cref{eq:qt} gives

\begin{align}\label{eq:qt2}
\nonumber \frac{n+1}{32}&\Big((1+t)^{-2-n}+(-1+t)^{-2-n}+(1-t)^{-2-n}+(-1-t)^{-2-n}\Big)\\
\nonumber +
\frac{1}{32t}& \Big(\left(t^2+t+1\right) (t+1)^{-n-2}+\left(t^2+t+1\right) (-t-1)^{-n-2}\Big)\\
+
\frac{1}{32t}& \Big(\left(-t^2+t-1\right) (1-t)^{-n-2}+\left(-t^2+t-1\right) (t-1)^{-n-2}\Big).
\end{align}

Taking the $j$th coefficient in $t$ from~\eqref{eq:qt2} and simplifying gives

\begin{align}\label{eq:qt3}
\frac{\left((-1)^n+1\right) \left((-1)^j+1\right)}{32} \left(
 \frac{(n+j+1)!}{n!j!}-
  \frac{(n+j)! ((n+1)^2+j^2+j (n+2))}{(n+1)! (j+1)!}\right).
\end{align}

Restricting~\eqref{eq:qt3} to $n=2m$ and $j=2k$ even, we obtain the desired expression in~\Cref{cor:coeff} for the coefficient of $q^{n} t^{m}$, giving the coefficient for $x^{m} y^{k}$:

\begin{align*}
&\frac{1}{8} \left(\frac{(n+j+1)!}{n!j!}-
\frac{ (n+j)! ((n+1)^2+j^2+j(n+2))}{(n+1)! (j+1)!}\right)\\
&=\frac{(n+j)!}{8 n!j!}\left(n+j+1-\frac{j^2+(n+1)^2+j (n+2)}{(n+1)(j+1)}\right)\\
&=\frac{nj}{8(n+j+1)}\binom{n+j+2}{j+1}\\
&=\frac{mk}{4m+4k+2}\binom{2m+2k+2}{2k+1}.
\end{align*}

Given the exact expression for $\d(\P_{m,k})$,~\Cref{cor:asymptotic} is routine using Stirling's asymptotic equivalent for factorials.

\section{Shifted staircases}\label{sec:bij_stair}

As for rectangles, we can view elements of $\SS_n$ as lattice paths of length $n$ that start at $(0,0)$ and use steps of the form $U=(1,1)$ and $D=(1,-1)$. The main difference is that paths representing different order ideals can have different endpoints. Let \[\SS_n^\times:=\SS_n \times \SS_n \quad\text{and}\quad \SS_n^{\leq}:=\{(p,q)\in \SS_n^\times:p\leq q\}.\] 

\begin{definition}
Define a \defn{bilateral Motzkin prefix} to be a lattice path that starts at $(0,0)$ and uses the steps of the form $U,D,O_1,O_2$. Let $\mathcal V$ denote the set of bilateral Motzkin prefixes, and let $\mathcal V_n$ be the set of bilateral Motzkin prefixes that use exactly $n$ steps. A \defn{bicolored Motzkin prefix} is a bilateral Motzkin prefix that stays weakly above the $x$-axis. Write $\mathcal N$ for the set of bicolored Motzkin prefixes, and let $\mathcal N_n=\mathcal N\cap\mathcal V_n$. 
\end{definition}

Throughout this section, we write $d(p,q)$ for the length of a geodesic between bilateral Motzkin prefixes $p$ and $q$ in the Hasse diagram of $\SS_n$. 

By applying the same rules as in the table in \eqref{eq:bija}, we can transform a pair $(p,q)\in \SS_n\times \SS_n$ into a path $A(p,q)$ that uses steps $U,D,O_1,O_2$. The path $A(p,q)$ is similar to a bilateral Motzkin path, except it does not necessarily end on the $x$-axis. 

\begin{proposition}\label{prop:bija_SS}
The map $\bija\colon\SS_n^\times\to\mathcal V_n$ is a bijection satisfying $\d(p,q) = \overline\d(\bija(p,q))$, and it restricts to a bijection from $\SS_n^\leq$ to $\mathcal N_n$.
\end{proposition}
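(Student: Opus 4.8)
The plan is to transfer the proof of~\Cref{prop:bija} to the shifted-staircase setting essentially word for word, the only structural change being that the paths encoding elements of $\SS_n$ are no longer required to end at a prescribed point. First I would note that, read one step at a time, the dictionary in~\eqref{eq:bija} is a bijection between the four pairs in $\{U,D\}\times\{U,D\}$ and the four steps in $\{U,D,O_1,O_2\}$. Since $\SS_n$ is the set of \emph{all} length-$n$ paths from $(0,0)$ with steps $U,D$, and $\mathcal V_n$ is the set of \emph{all} length-$n$ paths from $(0,0)$ with steps $U,D,O_1,O_2$, applying this dictionary coordinate by coordinate immediately makes $\bija$ a bijection $\SS_n^\times\to\mathcal V_n$. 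In contrast with~\Cref{prop:bija}, there is no side condition such as ``exactly $k$ up-steps'' to keep track of, so this step is essentially free.

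Next I would establish $\d(p,q)=\overline\d(\bija(p,q))$ by reusing the height computation performed right after~\eqref{eq:bija}. Writing $r=\bija(p,q)$ and letting $r_i$ denote the height of $r$ at the end of its $i$th step, one has $r_0=0=\tfrac{q_0-p_0}{2}$, and the rightmost columns of~\eqref{eq:bija} show, by induction on $i$, that $r_i=\tfrac{q_i-p_i}{2}$ for all $i$. Since $\SS_n$ is a distributive lattice, $\d(p,q)=|p\vartriangle q|$; and, just as for rectangles, the cells of the shifted staircase lying between the paths $p$ and $q$ in column $i$ number $\left|\tfrac{q_i-p_i}{2}\right|=|r_i|$, so that $\d(p,q)=\sum_{i=1}^{n}|r_i|=\sum_{i=0}^{n}|r_i|=\overline\d(r)$ (cf.~\eqref{eq:distance_shifted}). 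Here lies the one genuine difference from~\Cref{prop:bija}: the paths $p$ and $q$ may end at different heights, so $r_n=\tfrac{q_n-p_n}{2}$ need not vanish and the term $|r_n|$ really contributes — which is exactly why $\overline\d$, and not $\d$, appears on the right-hand side.

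Finally, for the restriction I would observe that $p\leq q$ in $\SS_n$ means containment of the corresponding order ideals, which in the path model says that $p$ lies weakly below $q$, i.e.\ $p_i\leq q_i$ for every $i$; via $r_i=\tfrac{q_i-p_i}{2}$ this is equivalent to $r_i\geq 0$ for all $i$, i.e.\ to $r=\bija(p,q)$ being a bicolored Motzkin prefix. Hence $\bija$ restricts to a bijection $\SS_n^\leq\to\mathcal N_n$, as claimed. The whole argument is bookkeeping rather than anything deep; the only point that needs a little care — and the sole place where the shifted-staircase statement departs from its rectangular analogue — is correctly retaining the boundary contribution $|r_n|$, which is what upgrades $\d$ to $\overline\d$.
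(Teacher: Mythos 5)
Your proposal is correct and follows exactly the route the paper takes: its proof of this proposition simply defers to the argument for \Cref{prop:bija}, and your write-up is a faithful expansion of that argument (coordinatewise bijectivity of the dictionary, the inductive identity $r_i=\tfrac{q_i-p_i}{2}$, and positivity of the $r_i$ characterizing $p\leq q$). You also correctly isolate the one genuine difference — the surviving boundary term $|r_n|$, which is why $\overline\d$ replaces $\d$ — so nothing further is needed.
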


\begin{proof}
      The proof is essentially the same as that of \Cref{prop:bija}.
\end{proof}

\section{Proof of \Cref{thm:gf_SS} and \Cref{cor:Wiener_JSn}}\label{sec:proof_stair}
\subsection{Bicolored Motzkin prefixes}

Let \[\mathcal N(x):=\sum_{n\geq 0}|\mathcal N_n|x^n\] be the generating function for bicolored Motzkin prefixes. 

\begin{proposition}\label{prop:N}
The generating function $\mathcal N(x)$ satisfies the functional equation \[\mathcal N(x)=1+3x\mathcal N(x)+x^2\mathcal M(x,1)\mathcal N(x).\] Thus, \[\mathcal N(x)=\frac{2}{1-4x+\sqrt{1-4x}}.\]
\end{proposition}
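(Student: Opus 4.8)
The plan is to establish the functional equation by a first-return decomposition of bicolored Motzkin prefixes, entirely parallel to the decompositions used for $\M$, $\Mm$, $\Mb$, and $\Mbm$ in \Cref{prop:m_decomp,prop:mm_decomp,prop:mb_decomp,prop:mbm_decomp}, and then to solve the resulting linear equation and substitute the known closed form of $\mathcal M(x,1)$ from \Cref{prop:m_decomp}.

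First I would classify a nonempty bicolored Motzkin prefix $r$ according to its first step. Since $r$ stays weakly above the $x$-axis, that step is $O_1$, $O_2$, or $U$. If it is $O_1$ or $O_2$, deleting it leaves an arbitrary bicolored Motzkin prefix, contributing $2x\,\mathcal N(x)$. If the first step is $U$, there are two subcases: either $r$ never returns to the $x$-axis, or it does. In the first subcase, deleting the initial $U$ and translating down by one unit yields a bicolored Motzkin prefix — and conversely, translating a bicolored Motzkin prefix up by one unit and prepending a $U$ produces a prefix that starts with $U$ and stays strictly above the axis thereafter — so this subcase contributes $x\,\mathcal N(x)$. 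Together with the empty path, these possibilities account for the summands $1+3x\,\mathcal N(x)$.

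In the remaining subcase, let the first return to the $x$-axis occur at the end of step $j+1$; then step $j+1$ is a $D$, the steps $2,\dots,j$ (translated down by one) form a bicolored Motzkin path, and the steps after $j+1$ form a bicolored Motzkin prefix. This decomposition is a bijection, giving the summand $x^2\,\mathcal M(x,1)\,\mathcal N(x)$ and hence
\[\mathcal N(x)=1+3x\,\mathcal N(x)+x^2\,\mathcal M(x,1)\,\mathcal N(x),\]
so that $\mathcal N(x)=\bigl(1-3x-x^2\,\mathcal M(x,1)\bigr)^{-1}$. Finally I would substitute $u=1$ into the explicit formula for $\mathcal M(x,u)$ from \Cref{prop:m_decomp}, which collapses to $\mathcal M(x,1)=\frac{1-2x-\sqrt{1-4x}}{2x^2}$; then $1-3x-x^2\,\mathcal M(x,1)=\frac{1-4x+\sqrt{1-4x}}{2}$, and inverting gives the claimed formula $\mathcal N(x)=\frac{2}{1-4x+\sqrt{1-4x}}$.

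I do not expect a serious obstacle: the argument is a routine instance of the first-return technique already deployed several times in the paper, and the final simplification is a short computation. The one point deserving care — and the reason the linear coefficient is $3x$ rather than the $2x$ one might naively expect from the flat steps alone — is the subcase of a prefix that begins with $U$ and never returns to the $x$-axis; verifying that the "translate down by one" map is a genuine bijection onto all bicolored Motzkin prefixes (using that any such prefix, translated up and preceded by a $U$, automatically avoids the axis) is the only place where the prefix setting, as opposed to the closed-path setting of \Cref{prop:m_decomp}, really enters.
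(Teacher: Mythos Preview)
Your proof is correct and is essentially the same as the paper's: both arguments use a first-step/first-return decomposition, yielding the same four summands $1$, $x\mathcal N(x)$ (initial $U$ with no return), $2x\mathcal N(x)$ (initial $O_1$ or $O_2$), and $x^2\mathcal M(x,1)\mathcal N(x)$ (initial $U$ with a first return), and the algebraic simplification is identical. The only difference is cosmetic: the paper groups the cases according to whether the prefix touches the $x$-axis beyond the origin, while you group by the type of the first step.
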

\begin{proof}
The expression $1+x\mathcal N(x)$ counts (possibly empty) bicolored Motzkin prefixes that only touch the $x$-axis at $(0,0)$, while the expression $2x\mathcal N(x)+x^2\mathcal M(x,1)\mathcal N(x)$ counts bicolored Motzkin prefixes that touch the $x$-axis at some point other than $(0,0)$. This is illustrated on the first line of~\Cref{fig:n_decomp}. It is routine to derive the explicit solution from the functional equation and \Cref{prop:m_decomp}. 
\end{proof}


\begin{figure}[htbp]
\scalebox{.8}{
$\begin{array}{ccccccccc}
 \raisebox{-.5\height}{\begin{tikzpicture}[scale=.25]
            \node (1) at (5.5,-1) {$\mathcal N$};
            \filldraw (0,0) circle (2pt);
            \filldraw (1,1) circle (2pt);
            \filldraw (2,2) circle (2pt);
            \filldraw (3,1) circle (2pt);
            \filldraw (4,1) circle (2pt);
            \filldraw (5,2) circle (2pt);
            \filldraw (1,0) circle (2pt);
            \filldraw (2,0) circle (2pt);
            \filldraw (3,0) circle (2pt);
            \filldraw (4,0) circle (2pt);
            \filldraw (5,0) circle (2pt);
            \filldraw (6.5,3) circle (2pt);
            \filldraw (7.5,2) circle (2pt);
            \filldraw (8.5,2) circle (2pt);
            \filldraw (9.5,1) circle (2pt);
            \filldraw (10.5,2) circle (2pt);
            \filldraw (10.5,0) circle (2pt);
            \filldraw (9.5,0) circle (2pt);
            \filldraw (8.5,0) circle (2pt);
            \filldraw (7.5,0) circle (2pt);
            \filldraw (6.5,0) circle (2pt);
            \draw (0,0) to (1,1) to (2,2) to (3,1) to (4,1) to (5,2);
            \draw (0,0) to (5,0);
            \draw (6.5,0) to (10.5,0);
            \draw (6.5,3) to (7.5,2) to (8.5,2) to (9.5,1) to (10.5,2);
            \node (a) at (5.75,0) {\scalebox{.5}{$\ldots$}};
            \node (b) at (5.75,2) {\scalebox{.5}{$\ldots$}};
            \node (c) at (5.75,3) {\scalebox{.5}{$\ldots$}};
      \end{tikzpicture}} & = & \raisebox{-.5\height}{\begin{tikzpicture} \node (0) at (0,0) {$\bullet$}; \node (1) at (0,-.5) {$1$}; \end{tikzpicture}} & \bigsqcup &  
\raisebox{-.5\height}{\begin{tikzpicture}[scale=.25]
            \node (0) at (.5,-1) {$x$};
            \node (1) at (5.5,-1) {$\mathcal N$};
            \filldraw (0,0) circle (2pt);
            \filldraw (1,1) circle (2pt);
            \filldraw (2,2) circle (2pt);
            \filldraw (3,1) circle (2pt);
            \filldraw (4,1) circle (2pt);
            \filldraw (5,2) circle (2pt);
            \filldraw (1,0) circle (2pt);
            \filldraw (2,0) circle (2pt);
            \filldraw (3,0) circle (2pt);
            \filldraw (4,0) circle (2pt);
            \filldraw (5,0) circle (2pt);
            \filldraw (6.5,3) circle (2pt);
            \filldraw (7.5,2) circle (2pt);
            \filldraw (8.5,2) circle (2pt);
            \filldraw (9.5,1) circle (2pt);
            \filldraw (10.5,2) circle (2pt);
            \filldraw (10.5,0) circle (2pt);
            \filldraw (9.5,0) circle (2pt);
            \filldraw (8.5,0) circle (2pt);
            \filldraw (7.5,0) circle (2pt);
            \filldraw (6.5,0) circle (2pt);
            \draw (0,0) to (1,1) to (2,2) to (3,1) to (4,1) to (5,2);
            \draw (0,0) to (5,0);
            \draw (6.5,0) to (10.5,0);
            \draw (6.5,3) to (7.5,2) to (8.5,2) to (9.5,1) to (10.5,2);
            \node (a) at (5.75,0) {\scalebox{.5}{$\ldots$}};
            \node (b) at (5.75,2) {\scalebox{.5}{$\ldots$}};
            \node (c) at (5.75,3) {\scalebox{.5}{$\ldots$}};
      \end{tikzpicture}} & \bigsqcup & 2\times
      \raisebox{-.5\height}{\begin{tikzpicture}[scale=.25]
            \node (0) at (-.5,-1) {$x$};
            \node (1) at (5.5,-1) {$\mathcal N$};
            \filldraw (0,0) circle (2pt);
            \filldraw (-1,0) circle (2pt);
            \filldraw (0,0) circle (2pt);
            \filldraw (1,1) circle (2pt);
            \filldraw (2,2) circle (2pt);
            \filldraw (3,1) circle (2pt);
            \filldraw (4,1) circle (2pt);
            \filldraw (5,2) circle (2pt);
            \filldraw (1,0) circle (2pt);
            \filldraw (2,0) circle (2pt);
            \filldraw (3,0) circle (2pt);
            \filldraw (4,0) circle (2pt);
            \filldraw (5,0) circle (2pt);
            \filldraw (6.5,3) circle (2pt);
            \filldraw (7.5,2) circle (2pt);
            \filldraw (8.5,2) circle (2pt);
            \filldraw (9.5,1) circle (2pt);
            \filldraw (10.5,2) circle (2pt);
            \filldraw (10.5,0) circle (2pt);
            \filldraw (9.5,0) circle (2pt);
            \filldraw (8.5,0) circle (2pt);
            \filldraw (7.5,0) circle (2pt);
            \filldraw (6.5,0) circle (2pt);
            \draw[very thick] (-1,0) to (0,0);
            \draw (0,0) to (1,1) to (2,2) to (3,1) to (4,1) to (5,2);
            \draw (0,0) to (5,0);
            \draw (6.5,0) to (10.5,0);
            \draw (6.5,3) to (7.5,2) to (8.5,2) to (9.5,1) to (10.5,2);
            \node (a) at (5.75,0) {\scalebox{.5}{$\ldots$}};
            \node (b) at (5.75,2) {\scalebox{.5}{$\ldots$}};
            \node (c) at (5.75,3) {\scalebox{.5}{$\ldots$}};
      \end{tikzpicture}} & \bigsqcup &
      \raisebox{-.5\height}{
      \begin{tikzpicture}[scale=.125]
            \draw[thick] (-1,-1) to (0,0);
            \draw[thick] (12.5,0) to (13.5,-1);
            \node (1) at (5.5,-3) {$\M$};
            \node (2) at (19,-3) {$\mathcal N$};
            \node (3) at (-.5,-3) {$x$};
            \node (4) at (12.5,-3) {$x$};
            \filldraw (0,0) circle (2pt);
            \filldraw (1,1) circle (2pt);
            \filldraw (2,2) circle (2pt);
            \filldraw (3,1) circle (2pt);
            \filldraw (4,1) circle (2pt);
            \filldraw (5,2) circle (2pt);
            \filldraw (1,0) circle (2pt);
            \filldraw (2,0) circle (2pt);
            \filldraw (3,0) circle (2pt);
            \filldraw (4,0) circle (2pt);
            \filldraw (5,0) circle (2pt);
            \filldraw (6.5,3) circle (2pt);
            \filldraw (7.5,2) circle (2pt);
            \filldraw (8.5,2) circle (2pt);
            \filldraw (9.5,1) circle (2pt);
            \filldraw (10.5,2) circle (2pt);
            \filldraw (11.5,1) circle (2pt);
            \filldraw (12.5,0) circle (2pt);
            \filldraw (11.5,0) circle (2pt);
            \filldraw (10.5,0) circle (2pt);
            \filldraw (9.5,0) circle (2pt);
            \filldraw (8.5,0) circle (2pt);
            \filldraw (7.5,0) circle (2pt);
            \filldraw (6.5,0) circle (2pt);
            \draw (0,0) to (1,1) to (2,2) to (3,1) to (4,1) to (5,2);
            \draw (0,0) to (5,0);
            \draw (6.5,0) to (12.5,0);
            \draw (6.5,3) to (7.5,2) to (8.5,2) to (9.5,1) to (10.5,2) to (11.5,1) to (12.5,0);
            \node (a) at (5.75,0) {\scalebox{.25}{$\ldots$}};
            \node (b) at (5.75,2) {\scalebox{.25}{$\ldots$}};
            \node (c) at (5.75,3) {\scalebox{.25}{$\ldots$}};
            \filldraw (13.5,-1) circle (2pt);
            \filldraw (14.5,0) circle (2pt);
            \filldraw (15.5,1) circle (2pt);
            \filldraw (16.5,0) circle (2pt);
            \filldraw (17.5,0) circle (2pt);
            \filldraw (18.5,1) circle (2pt);
            \filldraw (14.5,-1) circle (2pt);
            \filldraw (15.5,-1) circle (2pt);
            \filldraw (16.5,-1) circle (2pt);
            \filldraw (17.5,-1) circle (2pt);
            \filldraw (18.5,-1) circle (2pt);
            \filldraw (20,2) circle (2pt);
            \filldraw (21,1) circle (2pt);
            \filldraw (22,1) circle (2pt);
            \filldraw (23,0) circle (2pt);
            \filldraw (24,1) circle (2pt);
            \filldraw (24,-1) circle (2pt);
            \filldraw (23,-1) circle (2pt);
            \filldraw (22,-1) circle (2pt);
            \filldraw (21,-1) circle (2pt);
            \filldraw (20,-1) circle (2pt);
            \draw (13.5,-1) to (14.5,0) to (15.5,1) to (16.5,0) to (17.5,0) to (18.5,1);
            \draw (13.5,-1) to (18.5,-1);
            \draw (20,-1) to (24,-1);
            \draw (20,2) to (21,1) to (22,1) to (23,0) to (24,1);
            \node (d) at (19.25,-1) {\scalebox{.25}{$\ldots$}};
            \node (e) at (19.25,1) {\scalebox{.25}{$\ldots$}};
            \node (f) at (19.25,2) {\scalebox{.25}{$\ldots$}};
      \end{tikzpicture}
      }
\end{array}$}

\noindent\rule{\textwidth}{1pt}

\scalebox{.8}{
$\begin{array}{ccccccc}
 \raisebox{-.5\height}{\begin{tikzpicture}[scale=.25]
             \filldraw[lightgray,fill=lightgray] (0,0) to (1,1) to (2,2) to (3,1) to (4,1) to (5,2) to (5,0) to (0,0);
            \filldraw[lightgray,fill=lightgray] (6.5,3) to (7.5,2) to (8.5,2) to (9.5,1) to (10.5,2) to (10.5,0) to (6.5,0) to (6.5,3);
            \node (1) at (5.5,-1) {$\Nn$};
            \filldraw (0,0) circle (2pt);
            \filldraw (1,1) circle (2pt);
            \filldraw (2,2) circle (2pt);
            \filldraw (3,1) circle (2pt);
            \filldraw (4,1) circle (2pt);
            \filldraw (5,2) circle (2pt);
            \filldraw (1,0) circle (2pt);
            \filldraw (2,0) circle (2pt);
            \filldraw (3,0) circle (2pt);
            \filldraw (4,0) circle (2pt);
            \filldraw (5,0) circle (2pt);
            \filldraw (6.5,3) circle (2pt);
            \filldraw (7.5,2) circle (2pt);
            \filldraw (8.5,2) circle (2pt);
            \filldraw (9.5,1) circle (2pt);
            \filldraw (10.5,2) circle (2pt);
            \filldraw (10.5,0) circle (2pt);
            \filldraw (9.5,0) circle (2pt);
            \filldraw (8.5,0) circle (2pt);
            \filldraw (7.5,0) circle (2pt);
            \filldraw (6.5,0) circle (2pt);
            \draw (0,0) to (1,1) to (2,2) to (3,1) to (4,1) to (5,2);
            \draw (0,0) to (5,0);
            \draw (6.5,0) to (10.5,0);
            \draw (6.5,3) to (7.5,2) to (8.5,2) to (9.5,1) to (10.5,2);
            \node (a) at (5.75,0) {\scalebox{.5}{$\ldots$}};
            \node (b) at (5.75,2) {\scalebox{.5}{$\ldots$}};
            \node (c) at (5.75,3) {\scalebox{.5}{$\ldots$}};
      \end{tikzpicture}} & = &   
2\times \raisebox{-.5\height}{\begin{tikzpicture}[scale=.25]
            \filldraw[lightgray,fill=lightgray] (0,0) to (1,1) to (2,2) to (3,1) to (4,1) to (5,2) to (5,0) to (0,0);
            \filldraw[lightgray,fill=lightgray] (6.5,3) to (7.5,2) to (8.5,2) to (9.5,1) to (10.5,2) to (10.5,0) to (6.5,0) to (6.5,3);
            \filldraw (-1,0) circle (2pt);
            \node (0) at (-.5,-1) {$x$};
            \node (1) at (5.5,-1) {$\Nn$};
            \filldraw (0,0) circle (2pt);
            \filldraw (1,1) circle (2pt);
            \filldraw (2,2) circle (2pt);
            \filldraw (3,1) circle (2pt);
            \filldraw (4,1) circle (2pt);
            \filldraw (5,2) circle (2pt);
            \filldraw (1,0) circle (2pt);
            \filldraw (2,0) circle (2pt);
            \filldraw (3,0) circle (2pt);
            \filldraw (4,0) circle (2pt);
            \filldraw (5,0) circle (2pt);
            \filldraw (6.5,3) circle (2pt);
            \filldraw (7.5,2) circle (2pt);
            \filldraw (8.5,2) circle (2pt);
            \filldraw (9.5,1) circle (2pt);
            \filldraw (10.5,2) circle (2pt);
            \filldraw (10.5,0) circle (2pt);
            \filldraw (9.5,0) circle (2pt);
            \filldraw (8.5,0) circle (2pt);
            \filldraw (7.5,0) circle (2pt);
            \filldraw (6.5,0) circle (2pt);
            \draw (0,0) to (1,1) to (2,2) to (3,1) to (4,1) to (5,2);
            \draw (-1,0) to (5,0);
            \draw (6.5,0) to (10.5,0);
            \draw (6.5,3) to (7.5,2) to (8.5,2) to (9.5,1) to (10.5,2);
            \node (a) at (5.75,0) {\scalebox{.5}{$\ldots$}};
            \node (b) at (5.75,2) {\scalebox{.5}{$\ldots$}};
            \node (c) at (5.75,3) {\scalebox{.5}{$\ldots$}};
      \end{tikzpicture}} & \bigsqcup &
      \raisebox{-.5\height}{
      \begin{tikzpicture}[scale=.125]
            \filldraw[lightgray,fill=lightgray] (0,0) to (1,1) to (2,2) to (3,1) to (4,1) to (5,2) to (5,0) to (0,0);
            \filldraw[lightgray,fill=lightgray] (6.5,3) to (7.5,2) to (8.5,2) to (9.5,1) to (10.5,2) to (11.5,1) to (12.5,0) to (6.5,0) to (6.5,3);
            \draw[thick] (-1,-1) to (0,0);
            \draw[thick] (12.5,0) to (13.5,-1);
            \node (1) at (5.5,-3) {$\Mm$};
            \node (2) at (19,-3) {$\mathcal N$};
            \node (3) at (-.5,-3) {$x$};
            \node (4) at (12.5,-3) {$x$};
            \filldraw (0,0) circle (2pt);
            \filldraw (1,1) circle (2pt);
            \filldraw (2,2) circle (2pt);
            \filldraw (3,1) circle (2pt);
            \filldraw (4,1) circle (2pt);
            \filldraw (5,2) circle (2pt);
            \filldraw (1,0) circle (2pt);
            \filldraw (2,0) circle (2pt);
            \filldraw (3,0) circle (2pt);
            \filldraw (4,0) circle (2pt);
            \filldraw (5,0) circle (2pt);
            \filldraw (6.5,3) circle (2pt);
            \filldraw (7.5,2) circle (2pt);
            \filldraw (8.5,2) circle (2pt);
            \filldraw (9.5,1) circle (2pt);
            \filldraw (10.5,2) circle (2pt);
            \filldraw (11.5,1) circle (2pt);
            \filldraw (12.5,0) circle (2pt);
            \filldraw (11.5,0) circle (2pt);
            \filldraw (10.5,0) circle (2pt);
            \filldraw (9.5,0) circle (2pt);
            \filldraw (8.5,0) circle (2pt);
            \filldraw (7.5,0) circle (2pt);
            \filldraw (6.5,0) circle (2pt);
            \draw (0,0) to (1,1) to (2,2) to (3,1) to (4,1) to (5,2);
            \draw (0,0) to (5,0);
            \draw (6.5,0) to (12.5,0);
            \draw (6.5,3) to (7.5,2) to (8.5,2) to (9.5,1) to (10.5,2) to (11.5,1) to (12.5,0);
            \node (a) at (5.75,0) {\scalebox{.25}{$\ldots$}};
            \node (b) at (5.75,2) {\scalebox{.25}{$\ldots$}};
            \node (c) at (5.75,3) {\scalebox{.25}{$\ldots$}};
            \filldraw (13.5,-1) circle (2pt);
            \filldraw (14.5,0) circle (2pt);
            \filldraw (15.5,1) circle (2pt);
            \filldraw (16.5,0) circle (2pt);
            \filldraw (17.5,0) circle (2pt);
            \filldraw (18.5,1) circle (2pt);
            \filldraw (14.5,-1) circle (2pt);
            \filldraw (15.5,-1) circle (2pt);
            \filldraw (16.5,-1) circle (2pt);
            \filldraw (17.5,-1) circle (2pt);
            \filldraw (18.5,-1) circle (2pt);
            \filldraw (20,2) circle (2pt);
            \filldraw (21,1) circle (2pt);
            \filldraw (22,1) circle (2pt);
            \filldraw (23,0) circle (2pt);
            \filldraw (23,-1) circle (2pt);
            \filldraw (22,-1) circle (2pt);
            \filldraw (21,-1) circle (2pt);
            \filldraw (20,-1) circle (2pt);
            \draw (13.5,-1) to (14.5,0) to (15.5,1) to (16.5,0) to (17.5,0) to (18.5,1);
            \draw (13.5,-1) to (18.5,-1);
            \draw (20,-1) to (24,-1);
            \draw (20,2) to (21,1) to (22,1) to (23,0) to (24,1);
            \node (d) at (19.25,-1) {\scalebox{.25}{$\ldots$}};
            \node (e) at (19.25,1) {\scalebox{.25}{$\ldots$}};
            \node (f) at (19.25,2) {\scalebox{.25}{$\ldots$}};
      \end{tikzpicture}
      }
       & \bigsqcup & \raisebox{-.5\height}{
      \begin{tikzpicture}[scale=.125]
            \draw[thick] (-1,-1) to (0,0);
            \draw[thick] (12.5,0) to (13.5,-1);
            \node (1) at (5.5,-3) {$\M$};
            \node (2) at (19,-3) {$\Nn$};
            \node (3) at (-.5,-3) {$x$};
            \node (4) at (12.5,-3) {$x$};
            \filldraw (0,0) circle (2pt);
            \filldraw (1,1) circle (2pt);
            \filldraw (2,2) circle (2pt);
            \filldraw (3,1) circle (2pt);
            \filldraw (4,1) circle (2pt);
            \filldraw (5,2) circle (2pt);
            \filldraw (1,0) circle (2pt);
            \filldraw (2,0) circle (2pt);
            \filldraw (3,0) circle (2pt);
            \filldraw (4,0) circle (2pt);
            \filldraw (5,0) circle (2pt);
            \filldraw (6.5,3) circle (2pt);
            \filldraw (7.5,2) circle (2pt);
            \filldraw (8.5,2) circle (2pt);
            \filldraw (9.5,1) circle (2pt);
            \filldraw (10.5,2) circle (2pt);
            \filldraw (11.5,1) circle (2pt);
            \filldraw (12.5,0) circle (2pt);
            \filldraw (11.5,0) circle (2pt);
            \filldraw (10.5,0) circle (2pt);
            \filldraw (9.5,0) circle (2pt);
            \filldraw (8.5,0) circle (2pt);
            \filldraw (7.5,0) circle (2pt);
            \filldraw (6.5,0) circle (2pt);
            \draw (0,0) to (1,1) to (2,2) to (3,1) to (4,1) to (5,2);
            \draw (0,0) to (5,0);
            \draw (6.5,0) to (12.5,0);
            \draw (6.5,3) to (7.5,2) to (8.5,2) to (9.5,1) to (10.5,2) to (11.5,1) to (12.5,0);
            \node (a) at (5.75,0) {\scalebox{.25}{$\ldots$}};
            \node (b) at (5.75,2) {\scalebox{.25}{$\ldots$}};
            \node (c) at (5.75,3) {\scalebox{.25}{$\ldots$}};
            \filldraw[lightgray,fill=lightgray] (13.5,-1) to (14.5,0) to (15.5,1) to (16.5,0) to (17.5,0) to (18.5,1) to (18.5,-1) to (13.5,-1);
            \filldraw[lightgray,fill=lightgray] (20,2) to (21,1) to (22,1) to (23,0) to (24,1) to (25,0) to (26,-1) to (20,-1) to (20,2);
            \filldraw (13.5,-1) circle (2pt);
            \filldraw (14.5,0) circle (2pt);
            \filldraw (15.5,1) circle (2pt);
            \filldraw (16.5,0) circle (2pt);
            \filldraw (17.5,0) circle (2pt);
            \filldraw (18.5,1) circle (2pt);
            \filldraw (14.5,-1) circle (2pt);
            \filldraw (15.5,-1) circle (2pt);
            \filldraw (16.5,-1) circle (2pt);
            \filldraw (17.5,-1) circle (2pt);
            \filldraw (18.5,-1) circle (2pt);
            \filldraw (20,2) circle (2pt);
            \filldraw (21,1) circle (2pt);
            \filldraw (22,1) circle (2pt);
            \filldraw (23,0) circle (2pt);
            \filldraw (24,1) circle (2pt);
            \filldraw (24,-1) circle (2pt);
            \filldraw (23,-1) circle (2pt);
            \filldraw (22,-1) circle (2pt);
            \filldraw (21,-1) circle (2pt);
            \filldraw (20,-1) circle (2pt);
            \draw (13.5,-1) to (14.5,0) to (15.5,1) to (16.5,0) to (17.5,0) to (18.5,1);
            \draw (13.5,-1) to (18.5,-1);
            \draw (20,-1) to (24,-1);
            \draw (20,2) to (21,1) to (22,1) to (23,0) to (24,1);
            \node (d) at (19.25,-1) {\scalebox{.25}{$\ldots$}};
            \node (e) at (19.25,1) {\scalebox{.25}{$\ldots$}};
            \node (f) at (19.25,2) {\scalebox{.25}{$\ldots$}};
      \end{tikzpicture}
      }\\
      & \bigsqcup &
      \raisebox{-.5\height}{
      \begin{tikzpicture}[scale=.125]
            \filldraw[lightgray,fill=lightgray] (-1,-1) to (0,0) to (12.5,0) to (13.5,-1) to (-1,-1);
            \draw[thick] (-1,-1) to (0,0);
            \draw[thick] (12.5,0) to (13.5,-1);
            \node (1) at (5.5,-4) {\scalebox{0.8}{$\partial_x(x\M)$}};
            \node (2) at (19,-3) {$\mathcal N$};
            \node (3) at (-.5,-3) {$x$};
            \node (4) at (12.5,-3) {$x$};
            \filldraw (0,0) circle (2pt);
            \filldraw (1,1) circle (2pt);
            \filldraw (2,2) circle (2pt);
            \filldraw (3,1) circle (2pt);
            \filldraw (4,1) circle (2pt);
            \filldraw (5,2) circle (2pt);
            \filldraw (1,0) circle (2pt);
            \filldraw (2,0) circle (2pt);
            \filldraw (3,0) circle (2pt);
            \filldraw (4,0) circle (2pt);
            \filldraw (5,0) circle (2pt);
            \filldraw (6.5,3) circle (2pt);
            \filldraw (7.5,2) circle (2pt);
            \filldraw (8.5,2) circle (2pt);
            \filldraw (9.5,1) circle (2pt);
            \filldraw (10.5,2) circle (2pt);
            \filldraw (11.5,1) circle (2pt);
            \filldraw (12.5,0) circle (2pt);
            \filldraw (11.5,0) circle (2pt);
            \filldraw (10.5,0) circle (2pt);
            \filldraw (9.5,0) circle (2pt);
            \filldraw (8.5,0) circle (2pt);
            \filldraw (7.5,0) circle (2pt);
            \filldraw (6.5,0) circle (2pt);
            \draw (0,0) to (1,1) to (2,2) to (3,1) to (4,1) to (5,2);
            \draw (0,0) to (5,0);
            \draw (6.5,0) to (12.5,0);
            \draw (6.5,3) to (7.5,2) to (8.5,2) to (9.5,1) to (10.5,2) to (11.5,1) to (12.5,0);
            \node (a) at (5.75,0) {\scalebox{.25}{$\ldots$}};
            \node (b) at (5.75,2) {\scalebox{.25}{$\ldots$}};
            \node (c) at (5.75,3) {\scalebox{.25}{$\ldots$}};
            \filldraw (13.5,-1) circle (2pt);
            \filldraw (14.5,0) circle (2pt);
            \filldraw (15.5,1) circle (2pt);
            \filldraw (16.5,0) circle (2pt);
            \filldraw (17.5,0) circle (2pt);
            \filldraw (18.5,1) circle (2pt);
            \filldraw (14.5,-1) circle (2pt);
            \filldraw (15.5,-1) circle (2pt);
            \filldraw (16.5,-1) circle (2pt);
            \filldraw (17.5,-1) circle (2pt);
            \filldraw (18.5,-1) circle (2pt);
            \filldraw (20,2) circle (2pt);
            \filldraw (21,1) circle (2pt);
            \filldraw (22,1) circle (2pt);
            \filldraw (23,0) circle (2pt);
            \filldraw (24,1) circle (2pt);
            \filldraw (24,-1) circle (2pt);
            \filldraw (23,-1) circle (2pt);
            \filldraw (22,-1) circle (2pt);
            \filldraw (21,-1) circle (2pt);
            \filldraw (20,-1) circle (2pt);
            \draw (13.5,-1) to (14.5,0) to (15.5,1) to (16.5,0) to (17.5,0) to (18.5,1);
            \draw (13.5,-1) to (18.5,-1);
            \draw (20,-1) to (24,-1);
            \draw (20,2) to (21,1) to (22,1) to (23,0) to (24,1);
            \node (d) at (19.25,-1) {\scalebox{.25}{$\ldots$}};
            \node (e) at (19.25,1) {\scalebox{.25}{$\ldots$}};
            \node (f) at (19.25,2) {\scalebox{.25}{$\ldots$}};
      \end{tikzpicture}
      } & \bigsqcup &  \raisebox{-.5\height}{\begin{tikzpicture}[scale=.25]
             \filldraw[lightgray,fill=lightgray] (1,1) to (2,2) to (3,1) to (4,1) to (5,2) to (5,1) to (1,1);
            \filldraw[lightgray,fill=lightgray] (6.5,3) to (7.5,2) to (8.5,2) to (9.5,1) to (10.5,2) to (10.5,1) to (6.5,1) to (6.5,3);
            \node (1) at (5.5,-1) {$\Nn$};
            \node (3) at (.5,-1) {$x$};
            \filldraw (0,0) circle (2pt);
            \filldraw (1,1) circle (2pt);
            \filldraw (2,2) circle (2pt);
            \filldraw (3,1) circle (2pt);
            \filldraw (4,1) circle (2pt);
            \filldraw (5,2) circle (2pt);
            \filldraw (1,1) circle (2pt);
            \filldraw (2,1) circle (2pt);
            \filldraw (3,1) circle (2pt);
            \filldraw (4,1) circle (2pt);
            \filldraw (5,1) circle (2pt);
            \filldraw (6.5,3) circle (2pt);
            \filldraw (7.5,2) circle (2pt);
            \filldraw (8.5,2) circle (2pt);
            \filldraw (9.5,1) circle (2pt);
            \filldraw (10.5,2) circle (2pt);
            \filldraw (10.5,1) circle (2pt);
            \filldraw (9.5,1) circle (2pt);
            \filldraw (8.5,1) circle (2pt);
            \filldraw (7.5,1) circle (2pt);
            \filldraw (6.5,1) circle (2pt);
            \draw (0,0) to (1,1) to (2,2) to (3,1) to (4,1) to (5,2);
            \draw (1,1) to (5,1);
            \draw (6.5,1) to (10.5,1);
            \draw (6.5,3) to (7.5,2) to (8.5,2) to (9.5,1) to (10.5,2);
            \node (a) at (5.75,0) {\scalebox{.5}{$\ldots$}};
            \node (b) at (5.75,2) {\scalebox{.5}{$\ldots$}};
            \node (c) at (5.75,3) {\scalebox{.5}{$\ldots$}};
      \end{tikzpicture}} & \bigsqcup &  \raisebox{-.5\height}{\begin{tikzpicture}[scale=.25]
             \filldraw[lightgray,fill=lightgray] (0,0) to (1,1) to (10.5,1) to (10.5,0) to (0,0);
            \node (1) at (5.5,-1) {$\partial_x(x\mathcal N)$};
            \node (3) at (.5,-1) {$x$};
            \filldraw (1,1) circle (2pt);
            \filldraw (2,2) circle (2pt);
            \filldraw (3,1) circle (2pt);
            \filldraw (4,1) circle (2pt);
            \filldraw (5,2) circle (2pt);
            \filldraw (1,1) circle (2pt);
            \filldraw (2,1) circle (2pt);
            \filldraw (3,1) circle (2pt);
            \filldraw (4,1) circle (2pt);
            \filldraw (5,1) circle (2pt);
            \filldraw (6.5,3) circle (2pt);
            \filldraw (7.5,2) circle (2pt);
            \filldraw (8.5,2) circle (2pt);
            \filldraw (9.5,1) circle (2pt);
            \filldraw (10.5,2) circle (2pt);
            \filldraw (10.5,1) circle (2pt);
            \filldraw (9.5,1) circle (2pt);
            \filldraw (8.5,1) circle (2pt);
            \filldraw (7.5,1) circle (2pt);
            \filldraw (6.5,1) circle (2pt);
            \draw (0,0) to (1,1) to (2,2) to (3,1) to (4,1) to (5,2);
            \draw (1,1) to (5,1);
            \draw (6.5,1) to (10.5,1);
            \draw (6.5,3) to (7.5,2) to (8.5,2) to (9.5,1) to (10.5,2);
            \node (a) at (5.75,0) {\scalebox{.5}{$\ldots$}};
            \node (b) at (5.75,2) {\scalebox{.5}{$\ldots$}};
            \node (c) at (5.75,3) {\scalebox{.5}{$\ldots$}};
      \end{tikzpicture}}
\end{array}$}

\noindent\rule{\textwidth}{1pt}

\scalebox{.8}{
$\begin{array}{ccccccccc}
 \raisebox{-.5\height}{\begin{tikzpicture}[scale=.25]
            \node (1) at (5.5,2) {$\mathcal V$};
            \filldraw (0,0) circle (2pt);
            \filldraw (1,1) circle (2pt);
            \filldraw (2,1) circle (2pt);
            \filldraw (3,0) circle (2pt);
            \filldraw (4,-1) circle (2pt);
            \filldraw (5,-2) circle (2pt);
            \filldraw (1,0) circle (2pt);
            \filldraw (2,0) circle (2pt);
            \filldraw (3,0) circle (2pt);
            \filldraw (4,0) circle (2pt);
            \filldraw (5,0) circle (2pt);
            \filldraw (6.5,-3) circle (2pt);
            \filldraw (7.5,-2) circle (2pt);
            \filldraw (8.5,-2) circle (2pt);
            \filldraw (9.5,-3) circle (2pt);
            \filldraw (10.5,-3) circle (2pt);
            \filldraw (11.5,-2) circle (2pt);
            \filldraw (12.5,-2) circle (2pt);
            \filldraw (11.5,0) circle (2pt);
            \filldraw (12.5,0) circle (2pt);
            \filldraw (10.5,0) circle (2pt);
            \filldraw (9.5,0) circle (2pt);
            \filldraw (8.5,0) circle (2pt);
            \filldraw (7.5,0) circle (2pt);
            \filldraw (6.5,0) circle (2pt);
            \draw (0,0) to (1,1) to (2,1) to (3,0) to (4,-1) to (5,-2);
            \draw (0,0) to (5,0);
            \draw (6.5,0) to (12.5,0);
            \draw (6.5,-3) to (7.5,-2) to (8.5,-2) to (9.5,-3) to (10.5,-3) to (11.5,-2) to (12.5,-2);
            \node (a) at (5.75,0) {\scalebox{.5}{$\ldots$}};
            \node (b) at (5.75,-2) {\scalebox{.5}{$\ldots$}};
            \node (c) at (5.75,-3) {\scalebox{.5}{$\ldots$}};
      \end{tikzpicture}} & = & \raisebox{-.5\height}{\begin{tikzpicture} \node (0) at (0,0) {$\bullet$}; \node (1) at (0,-.5) {$1$}; \end{tikzpicture}} & \bigsqcup &  
 2\times \Bigg(
      \raisebox{-.5\height}{\begin{tikzpicture}[scale=.25]
            \node (0) at (-.5,-1) {$x$};
            \node (1) at (5.5,-1) {$\mathcal N$};
            \filldraw (0,0) circle (2pt);
            \filldraw (-1,0) circle (2pt);
            \filldraw (0,0) circle (2pt);
            \filldraw (1,1) circle (2pt);
            \filldraw (2,2) circle (2pt);
            \filldraw (3,1) circle (2pt);
            \filldraw (4,1) circle (2pt);
            \filldraw (5,2) circle (2pt);
            \filldraw (1,0) circle (2pt);
            \filldraw (2,0) circle (2pt);
            \filldraw (3,0) circle (2pt);
            \filldraw (4,0) circle (2pt);
            \filldraw (5,0) circle (2pt);
            \filldraw (6.5,3) circle (2pt);
            \filldraw (7.5,2) circle (2pt);
            \filldraw (8.5,2) circle (2pt);
            \filldraw (9.5,1) circle (2pt);
            \filldraw (10.5,2) circle (2pt);
            \filldraw (10.5,0) circle (2pt);
            \filldraw (9.5,0) circle (2pt);
            \filldraw (8.5,0) circle (2pt);
            \filldraw (7.5,0) circle (2pt);
            \filldraw (6.5,0) circle (2pt);
            \draw[very thick] (-1,0) to (0,0);
            \draw (0,0) to (1,1) to (2,2) to (3,1) to (4,1) to (5,2);
            \draw (0,0) to (5,0);
            \draw (6.5,0) to (10.5,0);
            \draw (6.5,3) to (7.5,2) to (8.5,2) to (9.5,1) to (10.5,2);
            \node (a) at (5.75,0) {\scalebox{.5}{$\ldots$}};
            \node (b) at (5.75,2) {\scalebox{.5}{$\ldots$}};
            \node (c) at (5.75,3) {\scalebox{.5}{$\ldots$}};
      \end{tikzpicture}} & & \bigsqcup &
      \raisebox{-.5\height}{\begin{tikzpicture}[scale=.25]
            \node (1) at (6.5,2) {$\mathcal V$};
            \node (0) at (-.5,-1) {$x$};
            \filldraw (-1,0) circle (2pt);
            \filldraw (0,0) circle (2pt);
            \filldraw (1,1) circle (2pt);
            \filldraw (2,1) circle (2pt);
            \filldraw (3,0) circle (2pt);
            \filldraw (4,-1) circle (2pt);
            \filldraw (5,-2) circle (2pt);
            \filldraw (1,0) circle (2pt);
            \filldraw (2,0) circle (2pt);
            \filldraw (3,0) circle (2pt);
            \filldraw (4,0) circle (2pt);
            \filldraw (5,0) circle (2pt);
            \filldraw (6.5,-3) circle (2pt);
            \filldraw (7.5,-2) circle (2pt);
            \filldraw (8.5,-2) circle (2pt);
            \filldraw (9.5,-3) circle (2pt);
            \filldraw (10.5,-3) circle (2pt);
            \filldraw (11.5,-2) circle (2pt);
            \filldraw (12.5,-2) circle (2pt);
            \filldraw (11.5,0) circle (2pt);
            \filldraw (12.5,0) circle (2pt);
            \filldraw (10.5,0) circle (2pt);
            \filldraw (9.5,0) circle (2pt);
            \filldraw (8.5,0) circle (2pt);
            \filldraw (7.5,0) circle (2pt);
            \filldraw (6.5,0) circle (2pt);
            \draw[very thick] (-1,0) to (0,0);
            \draw (0,0) to (1,1) to (2,1) to (3,0) to (4,-1) to (5,-2);
            \draw (-1,0) to (5,0);
            \draw (6.5,0) to (12.5,0);
            \draw (6.5,-3) to (7.5,-2) to (8.5,-2) to (9.5,-3) to (10.5,-3) to (11.5,-2) to (12.5,-2);
            \node (a) at (5.75,0) {\scalebox{.5}{$\ldots$}};
            \node (b) at (5.75,-2) {\scalebox{.5}{$\ldots$}};
            \node (c) at (5.75,-3) {\scalebox{.5}{$\ldots$}};
      \end{tikzpicture}} \\ & & & & & & \bigsqcup &
       \raisebox{-.5\height}{
      \begin{tikzpicture}[scale=.125]
            \draw[thick] (-1,-1) to (0,0);
            \draw[thick] (12.5,0) to (13.5,-1);
            \node (1) at (5.5,-3) {$\M$};
            \node (2) at (21,1) {$\mathcal V$};
            \node (3) at (-.5,-3) {$x$};
            \node (4) at (12.5,-3) {$x$};
            \filldraw (0,0) circle (2pt);
            \filldraw (1,1) circle (2pt);
            \filldraw (2,2) circle (2pt);
            \filldraw (3,1) circle (2pt);
            \filldraw (4,1) circle (2pt);
            \filldraw (5,2) circle (2pt);
            \filldraw (1,0) circle (2pt);
            \filldraw (2,0) circle (2pt);
            \filldraw (3,0) circle (2pt);
            \filldraw (4,0) circle (2pt);
            \filldraw (5,0) circle (2pt);
            \filldraw (6.5,3) circle (2pt);
            \filldraw (7.5,2) circle (2pt);
            \filldraw (8.5,2) circle (2pt);
            \filldraw (9.5,1) circle (2pt);
            \filldraw (10.5,2) circle (2pt);
            \filldraw (11.5,1) circle (2pt);
            \filldraw (12.5,0) circle (2pt);
            \filldraw (11.5,0) circle (2pt);
            \filldraw (10.5,0) circle (2pt);
            \filldraw (9.5,0) circle (2pt);
            \filldraw (8.5,0) circle (2pt);
            \filldraw (7.5,0) circle (2pt);
            \filldraw (6.5,0) circle (2pt);
            \draw (0,0) to (1,1) to (2,2) to (3,1) to (4,1) to (5,2);
            \draw (0,0) to (5,0);
            \draw (6.5,0) to (12.5,0);
            \draw (6.5,3) to (7.5,2) to (8.5,2) to (9.5,1) to (10.5,2) to (11.5,1) to (12.5,0);
            \node (a) at (5.75,0) {\scalebox{.5}{$\ldots$}};
            \node (b) at (5.75,2) {\scalebox{.5}{$\ldots$}};
            \node (c) at (5.75,3) {\scalebox{.5}{$\ldots$}};
            \filldraw (13.5,-1) circle (2pt);
            \filldraw (14.5,0) circle (2pt);
            \filldraw (15.5,0) circle (2pt);
            \filldraw (16.5,-1) circle (2pt);
            \filldraw (17.5,-2) circle (2pt);
            \filldraw (18.5,-3) circle (2pt);
            \filldraw (14.5,-1) circle (2pt);
            \filldraw (15.5,-1) circle (2pt);
            \filldraw (16.5,-1) circle (2pt);
            \filldraw (17.5,-1) circle (2pt);
            \filldraw (18.5,-1) circle (2pt);
            \filldraw (20,-4) circle (2pt);
            \filldraw (21,-3) circle (2pt);
            \filldraw (22,-3) circle (2pt);
            \filldraw (23,-4) circle (2pt);
            \filldraw (24,-4) circle (2pt);
            \filldraw (25,-3) circle (2pt);
            \filldraw (26,-3) circle (2pt);
            \filldraw (26,-1) circle (2pt);
            \filldraw (25,-1) circle (2pt);
            \filldraw (23,-1) circle (2pt);
            \filldraw (22,-1) circle (2pt);
            \filldraw (21,-1) circle (2pt);
            \filldraw (20,-1) circle (2pt);
            \draw (13.5,-1) to (14.5,0) to (15.5,0) to (16.5,-1) to (17.5,-2) to (18.5,-3);
            \draw (13.5,-1) to (18.5,-1);
            \draw (20,-1) to (26,-1);
            \draw (20,-4) to (21,-3) to (22,-3) to (23,-4) to (24,-4) to (25,-3) to (26,-3);
            \node (a) at (19.25,-1) {\scalebox{.5}{$\ldots$}};
            \node (b) at (19.25,-3) {\scalebox{.5}{$\ldots$}};
            \node (c) at (19.25,-4) {\scalebox{.5}{$\ldots$}};
      \end{tikzpicture}
      }\Bigg )
\end{array}$}

\noindent\rule{\textwidth}{1pt}

\scalebox{.8}{
$\begin{array}{ccccccccc}
 \raisebox{-.5\height}{\begin{tikzpicture}[scale=.25]
 \filldraw[lightgray,fill=lightgray] (0,0) to (1,1) to (2,1) to (3,0) to (0,0);
            \filldraw[lightgray,fill=lightgray] (3,0) to (5,0) to (5,-2) to (3,0);
            \filldraw[lightgray,fill=lightgray] (6.5,0) to (12.5,0) to (12.5,-2) to (11.5,-2) to (10.5,-3) to (9.5,-3) to (8.5,-2) to (7.5,-2) to (6.5,-3) to (6.5,0);
            \node (1) at (5.5,2) {$\Vv$};
            \filldraw (0,0) circle (2pt);
            \filldraw (1,1) circle (2pt);
            \filldraw (2,1) circle (2pt);
            \filldraw (3,0) circle (2pt);
            \filldraw (4,-1) circle (2pt);
            \filldraw (5,-2) circle (2pt);
            \filldraw (1,0) circle (2pt);
            \filldraw (2,0) circle (2pt);
            \filldraw (3,0) circle (2pt);
            \filldraw (4,0) circle (2pt);
            \filldraw (5,0) circle (2pt);
            \filldraw (6.5,-3) circle (2pt);
            \filldraw (7.5,-2) circle (2pt);
            \filldraw (8.5,-2) circle (2pt);
            \filldraw (9.5,-3) circle (2pt);
            \filldraw (10.5,-3) circle (2pt);
            \filldraw (11.5,-2) circle (2pt);
            \filldraw (12.5,-2) circle (2pt);
            \filldraw (12.5,0) circle (2pt);
            \filldraw (11.5,0) circle (2pt);
            \filldraw (10.5,0) circle (2pt);
            \filldraw (9.5,0) circle (2pt);
            \filldraw (8.5,0) circle (2pt);
            \filldraw (7.5,0) circle (2pt);
            \filldraw (6.5,0) circle (2pt);
            \draw (0,0) to (1,1) to (2,1) to (3,0) to (4,-1) to (5,-2);
            \draw (0,0) to (5,0);
            \draw (6.5,0) to (12.5,0);
            \draw (6.5,-3) to (7.5,-2) to (8.5,-2) to (9.5,-3) to (10.5,-3) to (11.5,-2) to (12.5,-2);
            \node (a) at (5.75,0) {\scalebox{.5}{$\ldots$}};
            \node (b) at (5.75,-2) {\scalebox{.5}{$\ldots$}};
            \node (c) at (5.75,-3) {\scalebox{.5}{$\ldots$}};
      \end{tikzpicture}} & = 2\times \Bigg( & 
\raisebox{-.5\height}{\begin{tikzpicture}[scale=.25]
 \filldraw[lightgray,fill=lightgray] (0,0) to (1,1) to (2,1) to (3,0) to (0,0);
            \filldraw[lightgray,fill=lightgray] (3,0) to (5,0) to (5,-2) to (3,0);
            \filldraw[lightgray,fill=lightgray] (6.5,0) to (12.5,0) to (12.5,-2) to (11.5,-2) to (10.5,-3) to (9.5,-3) to (8.5,-2) to (7.5,-2) to (6.5,-3) to (6.5,0);
            \filldraw (-1,0) circle (2pt);
            \node (0) at (-.5,-1) {$x$};
            \draw[thick] (-1,0) to (0,0);
            \node (1) at (5.5,2) {$\Vv$};
            \filldraw (0,0) circle (2pt);
            \filldraw (1,1) circle (2pt);
            \filldraw (2,1) circle (2pt);
            \filldraw (3,0) circle (2pt);
            \filldraw (4,-1) circle (2pt);
            \filldraw (5,-2) circle (2pt);
            \filldraw (1,0) circle (2pt);
            \filldraw (2,0) circle (2pt);
            \filldraw (3,0) circle (2pt);
            \filldraw (4,0) circle (2pt);
            \filldraw (5,0) circle (2pt);
            \filldraw (6.5,-3) circle (2pt);
            \filldraw (7.5,-2) circle (2pt);
            \filldraw (8.5,-2) circle (2pt);
            \filldraw (9.5,-3) circle (2pt);
            \filldraw (10.5,-3) circle (2pt);
            \filldraw (11.5,-2) circle (2pt);
            \filldraw (12.5,-2) circle (2pt);
            \filldraw (12.5,0) circle (2pt);
            \filldraw (11.5,0) circle (2pt);
            \filldraw (10.5,0) circle (2pt);
            \filldraw (9.5,0) circle (2pt);
            \filldraw (8.5,0) circle (2pt);
            \filldraw (7.5,0) circle (2pt);
            \filldraw (6.5,0) circle (2pt);
            \draw (0,0) to (1,1) to (2,1) to (3,0) to (4,-1) to (5,-2);
            \draw (0,0) to (5,0);
            \draw (6.5,0) to (12.5,0);
            \draw (6.5,-3) to (7.5,-2) to (8.5,-2) to (9.5,-3) to (10.5,-3) to (11.5,-2) to (12.5,-2);
            \node (a) at (5.75,0) {\scalebox{.5}{$\ldots$}};
            \node (b) at (5.75,-2) {\scalebox{.5}{$\ldots$}};
            \node (c) at (5.75,-3) {\scalebox{.5}{$\ldots$}};
      \end{tikzpicture}} & \bigsqcup & \raisebox{-.5\height}{
      \begin{tikzpicture}[scale=.125]
      \filldraw[lightgray,fill=lightgray] (0,0) to (1,1) to (2,2) to (3,1) to (4,1) to (5,2) to (5,0) to (0,0);
            \filldraw[lightgray,fill=lightgray] (6.5,3) to (7.5,2) to (8.5,2) to (9.5,1) to (10.5,2) to (11.5,1) to (12.5,0) to (6.5,0) to (6.5,3);
            \draw[thick] (-1,-1) to (0,0);
            \draw[thick] (12.5,0) to (13.5,-1);
            \node (1) at (5.5,-3) {$\Mm$};
            \node (2) at (19,1) {$\mathcal V$};
            \node (3) at (-.5,-3) {$x$};
            \node (4) at (12.5,-3) {$x$};
            \filldraw (0,0) circle (2pt);
            \filldraw (1,1) circle (2pt);
            \filldraw (2,2) circle (2pt);
            \filldraw (3,1) circle (2pt);
            \filldraw (4,1) circle (2pt);
            \filldraw (5,2) circle (2pt);
            \filldraw (1,0) circle (2pt);
            \filldraw (2,0) circle (2pt);
            \filldraw (3,0) circle (2pt);
            \filldraw (4,0) circle (2pt);
            \filldraw (5,0) circle (2pt);
            \filldraw (6.5,3) circle (2pt);
            \filldraw (7.5,2) circle (2pt);
            \filldraw (8.5,2) circle (2pt);
            \filldraw (9.5,1) circle (2pt);
            \filldraw (10.5,2) circle (2pt);
            \filldraw (11.5,1) circle (2pt);
            \filldraw (12.5,0) circle (2pt);
            \filldraw (11.5,0) circle (2pt);
            \filldraw (10.5,0) circle (2pt);
            \filldraw (9.5,0) circle (2pt);
            \filldraw (8.5,0) circle (2pt);
            \filldraw (7.5,0) circle (2pt);
            \filldraw (6.5,0) circle (2pt);
            \draw (0,0) to (1,1) to (2,2) to (3,1) to (4,1) to (5,2);
            \draw (0,0) to (5,0);
            \draw (6.5,0) to (12.5,0);
            \draw (6.5,3) to (7.5,2) to (8.5,2) to (9.5,1) to (10.5,2) to (11.5,1) to (12.5,0);
            \node (a) at (5.75,0) {\scalebox{.5}{$\ldots$}};
            \node (b) at (5.75,2) {\scalebox{.5}{$\ldots$}};
            \node (c) at (5.75,3) {\scalebox{.5}{$\ldots$}};
            \filldraw (13.5,-1) circle (2pt);
            \filldraw (14.5,0) circle (2pt);
            \filldraw (15.5,0) circle (2pt);
            \filldraw (16.5,-1) circle (2pt);
            \filldraw (17.5,-2) circle (2pt);
            \filldraw (18.5,-3) circle (2pt);
            \filldraw (14.5,-1) circle (2pt);
            \filldraw (15.5,-1) circle (2pt);
            \filldraw (16.5,-1) circle (2pt);
            \filldraw (17.5,-1) circle (2pt);
            \filldraw (18.5,-1) circle (2pt);
            \filldraw (20,-4) circle (2pt);
            \filldraw (21,-3) circle (2pt);
            \filldraw (22,-3) circle (2pt);
            \filldraw (23,-4) circle (2pt);
            \filldraw (24,-4) circle (2pt);
            \filldraw (25,-3) circle (2pt);
            \filldraw (26,-3) circle (2pt);
            \filldraw (25,-1) circle (2pt);
            \filldraw (24,-1) circle (2pt);
            \filldraw (23,-1) circle (2pt);
            \filldraw (22,-1) circle (2pt);
            \filldraw (21,-1) circle (2pt);
            \filldraw (20,-1) circle (2pt);
            \draw (13.5,-1) to (14.5,0) to (15.5,0) to (16.5,-1) to (17.5,-2) to (18.5,-3);
            \draw (13.5,-1) to (18.5,-1);
            \draw (20,-1) to (26,-1);
            \draw (20,-4) to (21,-3) to (22,-3) to (23,-4) to (24,-4) to (25,-3) to (26,-3);
            \node (a) at (19.25,-1) {\scalebox{.5}{$\ldots$}};
            \node (b) at (19.25,-3) {\scalebox{.5}{$\ldots$}};
            \node (c) at (19.25,-4) {\scalebox{.5}{$\ldots$}};
      \end{tikzpicture}
      } & \bigsqcup & \raisebox{-.5\height}{
      \begin{tikzpicture}[scale=.125]
            \filldraw[lightgray,fill=lightgray] (13.5,-1) to (14.5,0) to (15.5,0) to (16.5,-1) to (13.5,-1);
            \filldraw[lightgray,fill=lightgray] (16.5,-1) to (18.5,-1) to (18.5,-3) to (16.5,-1);
            \filldraw[lightgray,fill=lightgray] (20,-1) to (26,-1) to (26,-3) to (25,-3) to (24,-4) to (23,-4) to (22,-3) to (21,-3) to (20,-4) to (20,-1);
            \draw[thick] (-1,-1) to (0,0);
            \draw[thick] (12.5,0) to (13.5,-1);
            \node (1) at (5.5,-3) {$\M$};
            \node (2) at (21,1) {$\Vv$};
            \node (3) at (-.5,-3) {$x$};
            \node (4) at (12.5,-3) {$x$};
            \filldraw (0,0) circle (2pt);
            \filldraw (1,1) circle (2pt);
            \filldraw (2,2) circle (2pt);
            \filldraw (3,1) circle (2pt);
            \filldraw (4,1) circle (2pt);
            \filldraw (5,2) circle (2pt);
            \filldraw (1,0) circle (2pt);
            \filldraw (2,0) circle (2pt);
            \filldraw (3,0) circle (2pt);
            \filldraw (4,0) circle (2pt);
            \filldraw (5,0) circle (2pt);
            \filldraw (6.5,3) circle (2pt);
            \filldraw (7.5,2) circle (2pt);
            \filldraw (8.5,2) circle (2pt);
            \filldraw (9.5,1) circle (2pt);
            \filldraw (10.5,2) circle (2pt);
            \filldraw (11.5,1) circle (2pt);
            \filldraw (12.5,0) circle (2pt);
            \filldraw (11.5,0) circle (2pt);
            \filldraw (10.5,0) circle (2pt);
            \filldraw (9.5,0) circle (2pt);
            \filldraw (8.5,0) circle (2pt);
            \filldraw (7.5,0) circle (2pt);
            \filldraw (6.5,0) circle (2pt);
            \draw (0,0) to (1,1) to (2,2) to (3,1) to (4,1) to (5,2);
            \draw (0,0) to (5,0);
            \draw (6.5,0) to (12.5,0);
            \draw (6.5,3) to (7.5,2) to (8.5,2) to (9.5,1) to (10.5,2) to (11.5,1) to (12.5,0);
            \node (a) at (5.75,0) {\scalebox{.5}{$\ldots$}};
            \node (b) at (5.75,2) {\scalebox{.5}{$\ldots$}};
            \node (c) at (5.75,3) {\scalebox{.5}{$\ldots$}};
            \filldraw (13.5,-1) circle (2pt);
            \filldraw (14.5,0) circle (2pt);
            \filldraw (15.5,0) circle (2pt);
            \filldraw (16.5,-1) circle (2pt);
            \filldraw (17.5,-2) circle (2pt);
            \filldraw (18.5,-3) circle (2pt);
            \filldraw (14.5,-1) circle (2pt);
            \filldraw (15.5,-1) circle (2pt);
            \filldraw (16.5,-1) circle (2pt);
            \filldraw (17.5,-1) circle (2pt);
            \filldraw (18.5,-1) circle (2pt);
            \filldraw (20,-4) circle (2pt);
            \filldraw (21,-3) circle (2pt);
            \filldraw (22,-3) circle (2pt);
            \filldraw (23,-4) circle (2pt);
            \filldraw (24,-4) circle (2pt);
            \filldraw (25,-3) circle (2pt);
            \filldraw (26,-3) circle (2pt);
            \filldraw (26,-1) circle (2pt);
            \filldraw (25,-1) circle (2pt);
            \filldraw (23,-1) circle (2pt);
            \filldraw (22,-1) circle (2pt);
            \filldraw (21,-1) circle (2pt);
            \filldraw (20,-1) circle (2pt);
            \draw (13.5,-1) to (14.5,0) to (15.5,0) to (16.5,-1) to (17.5,-2) to (18.5,-3);
            \draw (13.5,-1) to (18.5,-1);
            \draw (20,-1) to (26,-1);
            \draw (20,-4) to (21,-3) to (22,-3) to (23,-4) to (24,-4) to (25,-3) to (26,-3);
            \node (a) at (19.25,-1) {\scalebox{.5}{$\ldots$}};
            \node (b) at (19.25,-3) {\scalebox{.5}{$\ldots$}};
            \node (c) at (19.25,-4) {\scalebox{.5}{$\ldots$}};
      \end{tikzpicture}} \\ 
      & \bigsqcup & \raisebox{-.5\height}{
      \begin{tikzpicture}[scale=.125]
            \filldraw[lightgray,fill=lightgray] (-1,-1) to (0,0) to (12.5,0) to (13.5,-1) to (-1,-1);
            \draw[thick] (-1,-1) to (0,0);
            \draw[thick] (12.5,0) to (13.5,-1);
            \node (1) at (5.5,-4) {\scalebox{0.8}{$\partial_x(x\M)$}};
            \node (3) at (-.5,-3) {$x$};
            \node (4) at (12.5,-3) {$x$};
            \node (2) at (19,1) {$\mathcal V$};
            \filldraw (0,0) circle (2pt);
            \filldraw (1,1) circle (2pt);
            \filldraw (2,2) circle (2pt);
            \filldraw (3,1) circle (2pt);
            \filldraw (4,1) circle (2pt);
            \filldraw (5,2) circle (2pt);
            \filldraw (1,0) circle (2pt);
            \filldraw (2,0) circle (2pt);
            \filldraw (3,0) circle (2pt);
            \filldraw (4,0) circle (2pt);
            \filldraw (5,0) circle (2pt);
            \filldraw (6.5,3) circle (2pt);
            \filldraw (7.5,2) circle (2pt);
            \filldraw (8.5,2) circle (2pt);
            \filldraw (9.5,1) circle (2pt);
            \filldraw (10.5,2) circle (2pt);
            \filldraw (11.5,1) circle (2pt);
            \filldraw (12.5,0) circle (2pt);
            \filldraw (11.5,0) circle (2pt);
            \filldraw (10.5,0) circle (2pt);
            \filldraw (9.5,0) circle (2pt);
            \filldraw (8.5,0) circle (2pt);
            \filldraw (7.5,0) circle (2pt);
            \filldraw (6.5,0) circle (2pt);
            \draw (0,0) to (1,1) to (2,2) to (3,1) to (4,1) to (5,2);
            \draw (0,0) to (5,0);
            \draw (6.5,0) to (12.5,0);
            \draw (6.5,3) to (7.5,2) to (8.5,2) to (9.5,1) to (10.5,2) to (11.5,1) to (12.5,0);
            \node (a) at (5.75,0) {\scalebox{.5}{$\ldots$}};
            \node (b) at (5.75,2) {\scalebox{.5}{$\ldots$}};
            \node (c) at (5.75,3) {\scalebox{.5}{$\ldots$}};
            \filldraw (13.5,-1) circle (2pt);
            \filldraw (14.5,0) circle (2pt);
            \filldraw (15.5,0) circle (2pt);
            \filldraw (16.5,-1) circle (2pt);
            \filldraw (17.5,-2) circle (2pt);
            \filldraw (18.5,-3) circle (2pt);
            \filldraw (14.5,-1) circle (2pt);
            \filldraw (15.5,-1) circle (2pt);
            \filldraw (16.5,-1) circle (2pt);
            \filldraw (17.5,-1) circle (2pt);
            \filldraw (18.5,-1) circle (2pt);
            \filldraw (20,-4) circle (2pt);
            \filldraw (21,-3) circle (2pt);
            \filldraw (22,-3) circle (2pt);
            \filldraw (23,-4) circle (2pt);
            \filldraw (24,-4) circle (2pt);
            \filldraw (25,-3) circle (2pt);
            \filldraw (26,-3) circle (2pt);
            \filldraw (24,-1) circle (2pt);
            \filldraw (25,-1) circle (2pt);
            \filldraw (23,-1) circle (2pt);
            \filldraw (22,-1) circle (2pt);
            \filldraw (21,-1) circle (2pt);
            \filldraw (20,-1) circle (2pt);
            \draw (13.5,-1) to (14.5,0) to (15.5,0) to (16.5,-1) to (17.5,-2) to (18.5,-3);
            \draw (13.5,-1) to (18.5,-1);
            \draw (20,-1) to (26,-1);
            \draw (20,-4) to (21,-3) to (22,-3) to (23,-4) to (24,-4) to (25,-3) to (26,-3);
            \node (a) at (19.25,-1) {\scalebox{.5}{$\ldots$}};
            \node (b) at (19.25,-3) {\scalebox{.5}{$\ldots$}};
            \node (c) at (19.25,-4) {\scalebox{.5}{$\ldots$}};
      \end{tikzpicture}} & \bigsqcup & \raisebox{-.5\height}{\begin{tikzpicture}[scale=.25]
             \filldraw[lightgray,fill=lightgray] (1,1) to (2,2) to (3,1) to (4,1) to (5,2) to (5,1) to (1,1);
            \filldraw[lightgray,fill=lightgray] (6.5,3) to (7.5,2) to (8.5,2) to (9.5,1) to (10.5,2) to (10.5,1) to (6.5,1) to (6.5,3);
            \node (1) at (5.5,-1) {$\Nn$};
            \node (3) at (.5,-1) {$x$};
            \filldraw (0,0) circle (2pt);
            \filldraw (1,1) circle (2pt);
            \filldraw (2,2) circle (2pt);
            \filldraw (3,1) circle (2pt);
            \filldraw (4,1) circle (2pt);
            \filldraw (5,2) circle (2pt);
            \filldraw (1,1) circle (2pt);
            \filldraw (2,1) circle (2pt);
            \filldraw (3,1) circle (2pt);
            \filldraw (4,1) circle (2pt);
            \filldraw (5,1) circle (2pt);
            \filldraw (6.5,3) circle (2pt);
            \filldraw (7.5,2) circle (2pt);
            \filldraw (8.5,2) circle (2pt);
            \filldraw (9.5,1) circle (2pt);
            \filldraw (10.5,2) circle (2pt);
            \filldraw (10.5,1) circle (2pt);
            \filldraw (9.5,1) circle (2pt);
            \filldraw (8.5,1) circle (2pt);
            \filldraw (7.5,1) circle (2pt);
            \filldraw (6.5,1) circle (2pt);
            \draw (0,0) to (1,1) to (2,2) to (3,1) to (4,1) to (5,2);
            \draw (1,1) to (5,1);
            \draw (6.5,1) to (10.5,1);
            \draw (6.5,3) to (7.5,2) to (8.5,2) to (9.5,1) to (10.5,2);
            \node (a) at (5.75,0) {\scalebox{.5}{$\ldots$}};
            \node (b) at (5.75,2) {\scalebox{.5}{$\ldots$}};
            \node (c) at (5.75,3) {\scalebox{.5}{$\ldots$}};
      \end{tikzpicture}} & \bigsqcup &  \raisebox{-.5\height}{\begin{tikzpicture}[scale=.25]
             \filldraw[lightgray,fill=lightgray] (0,0) to (1,1) to (10.5,1) to (10.5,0) to (0,0);
            \node (1) at (5.5,-1) {$\partial_x(x\mathcal N)$};
            \node (3) at (.5,-1) {$x$};
            \filldraw (1,1) circle (2pt);
            \filldraw (2,2) circle (2pt);
            \filldraw (3,1) circle (2pt);
            \filldraw (4,1) circle (2pt);
            \filldraw (5,2) circle (2pt);
            \filldraw (1,1) circle (2pt);
            \filldraw (2,1) circle (2pt);
            \filldraw (3,1) circle (2pt);
            \filldraw (4,1) circle (2pt);
            \filldraw (5,1) circle (2pt);
            \filldraw (6.5,3) circle (2pt);
            \filldraw (7.5,2) circle (2pt);
            \filldraw (8.5,2) circle (2pt);
            \filldraw (9.5,1) circle (2pt);
            \filldraw (10.5,2) circle (2pt);
            \filldraw (10.5,1) circle (2pt);
            \filldraw (9.5,1) circle (2pt);
            \filldraw (8.5,1) circle (2pt);
            \filldraw (7.5,1) circle (2pt);
            \filldraw (6.5,1) circle (2pt);
            \draw (0,0) to (1,1) to (2,2) to (3,1) to (4,1) to (5,2);
            \draw (1,1) to (5,1);
            \draw (6.5,1) to (10.5,1);
            \draw (6.5,3) to (7.5,2) to (8.5,2) to (9.5,1) to (10.5,2);
            \node (a) at (5.75,0) {\scalebox{.5}{$\ldots$}};
            \node (b) at (5.75,2) {\scalebox{.5}{$\ldots$}};
            \node (c) at (5.75,3) {\scalebox{.5}{$\ldots$}};
      \end{tikzpicture}}  \Bigg)
\end{array}$}

      \caption{The decompositions of lattice paths in $\mathcal N$, $\Nn$, $\mathcal V$, and $\Vv$ from~\Cref{prop:N},~\Cref{prop:Nn},~\Cref{prop:V}, and~\Cref{prop:Vv}.}
      \label{fig:n_decomp}
\end{figure}

Define the generating function \[\Nn(x):=\sum_{n\geq 0}x^n\sum_{p\in\mathcal N_n}\overline\d(p).\]

\begin{proposition}\label{prop:Nn}
The generating function $\Nn(x)$ satisfies the functional equation \begin{align*}
\Nn(x)&=2x\Nn(x)+x^2\Mm(x,1)\mathcal N(x)+x^2\mathcal M(x,1)\mathcal N(x)+x^2\mathcal N(x)\frac{\partial}{\partial x}(x\mathcal M(x,1)) \\
&+x\Nn(x)+x\frac{\partial}{\partial x}(x\mathcal N(x)).
\end{align*}
Thus, \[\Nn(x)=\frac{4x\left(1+\sqrt{1-4x}-x(1-\sqrt{1-4x})\right)}{\sqrt{1-4x}(1-4x+\sqrt{1-4x})^3}.\]
\end{proposition}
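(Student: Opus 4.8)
The plan is to derive the functional equation by the same first-return decomposition used for $\mathcal N(x)$ in \Cref{prop:N}, now carrying along the statistic $\overline\d$, and then to solve the resulting linear equation using the closed forms of $\mathcal M(x,1)$, $\Mm(x,1)$, and $\mathcal N(x)$ from \Cref{prop:m_decomp,prop:mm_decomp,prop:N}.

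Two bookkeeping rules make the translation to $\overline\d$ automatic. First, if a bicolored Motzkin prefix factors as $r=s\cdot t$ at a return to the $x$-axis — so the shared lattice point has height $0$ — then the heights of the two pieces do not interact and $\overline\d(r)=\overline\d(s)+\overline\d(t)$; hence, at the level of generating functions, the total-$\overline\d$ series of a concatenation product obeys the Leibniz rule $\widetilde{FG}=\tilde F\,G+F\,\tilde G$. Second, if a bicolored Motzkin path $w$ of length $\ell$ (resp.\ a bicolored Motzkin prefix $q$ of length $\ell$) is lifted by one unit to sit above an initial $U$ step, then its $\overline\d$-contribution becomes $\overline\d(w)+(\ell+1)$, the summand $\ell+1$ counting the $\ell+1$ lattice points that have each been raised by $1$; in generating-function terms this extra contribution is recorded by $\partial_x\!\bigl(x\mathcal M(x,1)\bigr)$ (resp.\ $\partial_x\!\bigl(x\mathcal N(x)\bigr)$). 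Feeding these two rules into the four blocks of the $\mathcal N$-decomposition drawn on the second line of \Cref{fig:n_decomp} — the empty prefix (area $0$); an initial $O_1$ or $O_2$ return followed by an element of $\mathcal N$; an initial excursion $U\cdot w\cdot D$ with $w\in\mathcal M$ followed by an element of $\mathcal N$; and an initial $U$ below which the path stays strictly positive, so that the remainder lowered by $1$ is an element of $\mathcal N$ — yields the stated functional equation for $\Nn(x)$.

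To solve it, I would collect the $\Nn(x)$ terms; the factor multiplying $\Nn(x)$ is $1-3x-x^2\mathcal M(x,1)$, which by \Cref{prop:N} equals $\mathcal N(x)^{-1}$, so that
\[
\Nn(x)=\mathcal N(x)\Bigl(x^2\bigl(\Mm(x,1)+\partial_x(x\mathcal M(x,1))\bigr)\mathcal N(x)+x\,\partial_x(x\mathcal N(x))\Bigr).
\]
Then I would substitute the explicit forms. Writing $s=\sqrt{1-4x}$, so that $\mathcal N(x)=\tfrac{2}{s(s+1)}$ and $1-4x=s^2$, a short computation should produce the two cancellations that collapse everything: $\partial_x(x\mathcal N(x))=s^{-3}$, and $\Mm(x,1)+\partial_x(x\mathcal M(x,1))=s^{-2}$, the latter because $(1-2x+s)\,\mathcal M(x,1)=2$. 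Substituting gives $\Nn(x)=\dfrac{2x(2x+s+1)}{s^4(s+1)^2}$, and since $(2x+s+1)(s+1)=2(1+s)-2x(1-s)$ (using $s^2=1-4x$) and $s^2+s=1-4x+s$, this is the claimed $\dfrac{4x\bigl(1+\sqrt{1-4x}-x(1-\sqrt{1-4x})\bigr)}{\sqrt{1-4x}\,(1-4x+\sqrt{1-4x})^3}$.

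I expect the one delicate point to be the first step: in each block where a sub-path is raised onto an initial $U$ step, one must cleanly separate the intrinsic area $\overline\d(w)$ of that sub-path from the constant vertical shift it undergoes — this is exactly what generates the $\partial_x(x\,\cdot)$ terms, and it is easy to get wrong. Once the functional equation is in hand, the rest is routine algebra (and immediate with a computer algebra system); the unexpectedly simple final answer is entirely due to the identities $\Mm(x,1)+\partial_x(x\mathcal M(x,1))=(1-4x)^{-1}$ and $\partial_x(x\mathcal N(x))=(1-4x)^{-3/2}$.
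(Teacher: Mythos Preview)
Your approach is exactly the paper's: the same first-return decomposition (illustrated on the second line of \Cref{fig:n_decomp}) carrying the $\overline\d$ statistic via the Leibniz and ``lifting'' rules, followed by solving the linear equation using the closed forms from \Cref{prop:m_decomp,prop:mm_decomp,prop:N}. One small point: the decomposition you describe actually produces $x^2\mathcal M(x,1)\Nn(x)$ rather than the printed $x^2\mathcal M(x,1)\mathcal N(x)$ (a typo in the statement, as the figure confirms), and indeed your solving step---where you collect a factor $1-3x-x^2\mathcal M(x,1)$ in front of $\Nn(x)$---uses this corrected version; your identities $\Mm(x,1)+\partial_x(x\mathcal M(x,1))=(1-4x)^{-1}$ and $\partial_x(x\mathcal N(x))=(1-4x)^{-3/2}$ and the resulting closed form are all correct.
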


\begin{proof}
Following the same ideas used in the proof of \Cref{prop:mm_decomp}, we find that \[2x\Nn(x)+x^2\Mm(x,1)\mathcal N(x)+x^2\mathcal M(x,1)\mathcal N(x)+x^2\mathcal N(x)\frac{\partial}{\partial x}(x\mathcal M(x,1))\] counts bicolored Motzkin prefixes that touch the $x$-axis at some point other than $(0,0)$, with each path $p$ weighted by $\overline d(p)$. The generating function for bicolored Motzkin prefixes that only touch the $x$-axis at $(0,0)$ (with each path $p$ weighted by $\overline d(p)$) is \[x\Nn(x)+x\frac{\partial}{\partial x}(x\mathcal N(x)).\]  This is illustrated on the second line of~\Cref{fig:n_decomp}.
This yields the functional equation, from which the explicit solution is straightforward to obtain via \Cref{prop:m_decomp,prop:mm_decomp,prop:N}. 
\end{proof}

Let \[\mathcal V(x):=\sum_{n\geq 0}|\mathcal V_n|x^n.\] 

\begin{proposition}\label{prop:V}
The generating function $\mathcal V(x)$ satisfies the functional equation \[\mathcal V(x)=1+2x\mathcal N(x)+2x\mathcal V(x)+2x^2\mathcal M(x,1)\mathcal V(x).\] Thus, \[\mathcal V(x)=\frac{1+\sqrt{1-4x}}{\sqrt{1-4x}(1-4x+\sqrt{1-4x})}.\]
\end{proposition}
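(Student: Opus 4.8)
\emph{Proof proposal.} The plan is to run the same first-return decomposition used for $\mathcal M$ and $\mathcal N$ in \Cref{prop:m_decomp,prop:N}, now applied to an arbitrary bilateral Motzkin prefix. Let $v\in\mathcal V$ be nonempty; I split into two cases according to whether $v$ ever returns to the $x$-axis after its starting vertex. If it never does, its first step must be $U$ or $D$ (a step $O_1$ or $O_2$ would put it back on the $x$-axis at once); after an initial $U$ the path stays weakly above height $1$ forever, so deleting that step and translating down by one produces a bicolored Motzkin prefix, a contribution of $x\mathcal N(x)$, and the first-step-$D$ case gives the same by the reflection $v\mapsto -v$, for $2x\mathcal N(x)$ in total. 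If $v$ does return to the $x$-axis, cut $v$ at its first return: the initial block is either a single step $O_1$ or $O_2$ (together of weight $2x$), or it starts with $U$, stays weakly above height $1$ in between, and ends with $D$ --- translating the interior down by one leaves a bicolored Motzkin \emph{path}, so this block has weight $x^{2}\mathcal M(x,1)$, and the mirror-image blocks starting with $D$ contribute another $x^{2}\mathcal M(x,1)$; in every case the remaining portion of $v$ after the first return is an unconstrained element of $\mathcal V$. Adding $1$ for the empty path, these cases assemble into
\[\mathcal V(x)=1+2x\mathcal N(x)+2x\mathcal V(x)+2x^{2}\mathcal M(x,1)\mathcal V(x),\]
which is the decomposition drawn on the third line of \Cref{fig:n_decomp}.

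Solving the functional equation, I would write
\[\mathcal V(x)=\frac{1+2x\mathcal N(x)}{1-2x-2x^{2}\mathcal M(x,1)}\]
and substitute the closed forms already obtained. Putting $u=1$ in \Cref{prop:m_decomp} gives $\mathcal M(x,1)=\bigl(1-2x-\sqrt{1-4x}\,\bigr)/(2x^{2})$, so that $1-2x-2x^{2}\mathcal M(x,1)=\sqrt{1-4x}$; and \Cref{prop:N} gives $\mathcal N(x)=2/(1-4x+\sqrt{1-4x})$, so that $1+2x\mathcal N(x)=(1+\sqrt{1-4x})/(1-4x+\sqrt{1-4x})$. Dividing numerator by denominator yields exactly
\[\mathcal V(x)=\frac{1+\sqrt{1-4x}}{\sqrt{1-4x}\,\bigl(1-4x+\sqrt{1-4x}\bigr)},\]
as claimed.

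I do not expect a real obstacle here: the final step is a one-line substitution, and the combinatorics is a routine instance of the first-return pattern. The only thing needing care is the bookkeeping --- remembering that the tail of $v$ after its first return ranges over all of $\mathcal V$ rather than over a half-plane-constrained family, that each factor of $2$ comes either from the reflection symmetry ($U$ versus $D$) or from the two horizontal colours ($O_1$ versus $O_2$), and that one must separately include the ``never returns to the $x$-axis'' contribution $2x\mathcal N(x)$, which is exactly what distinguishes this decomposition from the one-sided ones for $\mathcal M$ and $\mathcal N$.
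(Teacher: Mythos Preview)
Your proof is correct and follows essentially the same first-return decomposition as the paper, which also splits bilateral Motzkin prefixes according to whether they ever revisit the $x$-axis and references the third line of \Cref{fig:n_decomp}. Your version is simply more explicit, spelling out the bijections behind each term and carrying out the substitutions for $\mathcal M(x,1)$ and $\mathcal N(x)$ that the paper leaves implicit.
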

\begin{proof}
The expression $1+2x\mathcal N(x)$ counts (possibly empty) bilateral Motzkin prefixes that only touch the $x$-axis at $(0,0)$, while the expression $2x\mathcal V(x)+2x^2\mathcal M(x,1)\mathcal V(x)$ counts bilateral Motzkin prefixes that touch the $x$-axis at some point other than $(0,0)$.  This is illustrated on the third line of~\Cref{fig:n_decomp}. The explicit solution can be derived from the functional equation using \Cref{prop:m_decomp,prop:N}. 
\end{proof}

Finally, consider the generating function \[\Vv(x):=\sum_{n\geq 0}x^n\sum_{p\in\mathcal V_n}\overline\d(p).\]
\begin{proposition}\label{prop:Vv}
The generating function $\Vv(x)$ satisfies the functional equation \begin{align*}
\Vv(x)&=2x\Vv(x)+2x^2\Mm(x,1)\mathcal V(x)+2x^2\mathcal M(x,1)\Vv(x)+2x^2\mathcal V(x)\frac{\partial}{\partial x}(x\mathcal M(x,1)) \\
&+2x\Nn(x)+2x\frac{\partial}{\partial x}(x\mathcal N(x)).
\end{align*}
Thus, \begin{equation}\label{eq:Vv}\Vv(x)=\frac{8x\left(1+\sqrt{1-4x}-x(3+\sqrt{1-4x})\right)}{(1-4x)(1-4x+\sqrt{1-4x})^3}.\end{equation}
\end{proposition}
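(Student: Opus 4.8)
The plan is to follow the same template as \Cref{prop:m_decomp,prop:mm_decomp,prop:mb_decomp,prop:mbm_decomp,prop:N,prop:Nn,prop:V}: first derive the functional equation by a first-return decomposition of bilateral Motzkin prefixes that keeps track of the statistic $\overline\d$, and then solve the resulting \emph{linear} equation for $\Vv(x)$ by substituting the closed forms already obtained for $\mathcal M(x,1)$, $\Mm(x,1)$, $\mathcal N(x)$, $\Nn(x)$, and $\mathcal V(x)$.

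For the functional equation, decompose a bilateral Motzkin prefix $r$ according to the first time $j>0$ at which it touches the $x$-axis, with $j=\infty$ if it never returns. If $r$ never returns to the axis, then apart from the empty path (for which $\overline\d=0$) its first step is $U$ or $D$; stripping that step (and reflecting across the $x$-axis in the second case) writes $r=U\cdot p$ with $p$ a bicolored Motzkin prefix in $\mathcal N$ sitting one unit above the axis, so that $\overline\d(r)=\big(\len(p)+1\big)+\overline\d(p)$. Summing over $p$ and doubling for the reflection contributes $2x\,\Nn(x)+2x\,\tfrac{\partial}{\partial x}\!\big(x\mathcal N(x)\big)$, the derivative term recording the shift $\len(p)+1$. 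If instead $r$ first returns to the axis at time $j$, the initial segment is either a single flat step ($j=1$), or an arch $U\,p\,D$ (or its reflection $D\,p\,U$) with $p\in\mathcal M$ a bicolored Motzkin path ($j\ge2$), and in both cases the remainder of $r$ is an arbitrary bilateral Motzkin prefix. A flat first step leaves all later heights unchanged, hence contributes $2x\,\Vv(x)$; for an arch one computes $\overline\d(r)=\big(\len(p)+1\big)+\d(p)+\overline\d(\text{tail})$ (using $\overline\d(p)=\d(p)$ for Motzkin paths, so that the interior area is recorded by $\Mm(x,1)$), and summing over $p$ and the tail gives $2x^2\big(\Mm(x,1)\mathcal V(x)+\mathcal M(x,1)\Vv(x)+\mathcal V(x)\tfrac{\partial}{\partial x}(x\mathcal M(x,1))\big)$. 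Adding the three contributions yields exactly the stated functional equation; this is the content of the bottom block of \Cref{fig:n_decomp}.

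It then remains to isolate $\Vv(x)$ in the identity $\Vv(x)\big(1-2x-2x^2\mathcal M(x,1)\big)=2x^2\mathcal V(x)\big(\Mm(x,1)+\tfrac{\partial}{\partial x}(x\mathcal M(x,1))\big)+2x\Nn(x)+2x\tfrac{\partial}{\partial x}(x\mathcal N(x))$ and substitute $\mathcal M(x,1)=\tfrac{1-2x-\sqrt{1-4x}}{2x^2}$ together with the explicit series from \Cref{prop:m_decomp,prop:mm_decomp,prop:N,prop:Nn,prop:V}. This is a direct but lengthy simplification of radical functions, best carried out with a computer algebra system, and it produces the claimed closed form~\eqref{eq:Vv}. (Combined with \Cref{prop:bija_SS} and the identity $\d(\SS_n)=[x^n]\Vv(x)$, this also completes the proofs of \Cref{thm:gf_SS} and \Cref{cor:Wiener_JSn}.)

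The only genuine difficulty I anticipate is the bookkeeping in the arch decompositions: correctly seeing that lifting an arch whose underlying Motzkin path has length $m$ adds $m+1$ to $\overline\d$, so that the matching generating-function operation is $\tfrac{\partial}{\partial x}\!\big(x\,\cdot\,\big)$ applied to the relevant path series rather than the series itself, and likewise for the bicolored-prefix case. Once the four height increments $\overline\d(p)$, $\len(p)+1$, $\d(p)$, and $\overline\d(\text{tail})$ have been tracked correctly and combined with the reflection factors of $2$, both the functional equation and --- after the symbolic computation --- the explicit formula follow routinely.
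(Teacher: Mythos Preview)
Your proposal is correct and follows essentially the same approach as the paper: a first-return decomposition of bilateral Motzkin prefixes that separates the ``never returns to the axis'' case (yielding the $\Nn$ and $\partial_x(x\mathcal N)$ terms) from the ``returns to the axis'' case (yielding the $\Vv$, $\Mm\mathcal V$, $\mathcal M\Vv$, and $\partial_x(x\mathcal M)\mathcal V$ terms), with the reflection symmetry contributing the factors of~$2$. Your bookkeeping of the height increment $\len(p)+1$ under an arch and the identification $\overline\d(p)=\d(p)$ for bicolored Motzkin paths are exactly the points the paper's figure and text encode, so nothing is missing.
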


\begin{proof}
Following the same ideas used in the proof of \Cref{prop:Nn}, we find that \[2x\Vv(x)+2x^2\Mm(x,1)\mathcal V(x)+2x^2\mathcal M(x,1)\Vv(x)+2x^2\mathcal V(x)\frac{\partial}{\partial x}(x\mathcal M(x,1))\] counts bilateral Motzkin prefixes that touch the $x$-axis at some point other than $(0,0)$, with each path $p$ weighted by $\overline d(p)$. Furthermore, the generating function for bilateral Motzkin prefixes that only touch the $x$-axis at $(0,0)$ (with each path $p$ weighted by $\overline d(p)$) is \[2x\Nn(x)+2x\frac{\partial}{\partial x}(x\mathcal N(x)).\]  This is illustrated on the fourth line of~\Cref{fig:n_decomp}.
This yields the functional equation, from which one can derive the explicit solution using \Cref{prop:m_decomp,prop:mm_decomp,,prop:N,,prop:V}. 
\end{proof}

We conclude \Cref{cor:Wiener_JSn} by expanding the explicit generating function for $\Vv(x)$ given in \Cref{eq:Vv} as \[\frac{8x\left(1+\sqrt{1-4x}-x(3+\sqrt{1-4x})\right)}{(1-4x)(1-4x+\sqrt{1-4x})^3} = \sum_{n\geq 0} a_n x^n.\]
 Using a computer algebra system, the coefficients $a_n$ satisfy the difference equation
\[(2 n+3)2^{-2n-1} a_n - (4 n +5)2^{-2n-3} a_{n + 1} + (2 + 2 n)2^{-2n-5} a_{n + 2} = 0\]
with initial conditions $a_0=0$ and $a_1=2$.  It is easily checked that $\frac{2 n (2 n + 1)}{3}\binom{2 n - 1}{n}$ satisfies this equation and initial conditions.

\section{Asymptotic distributions}\label{sec:asymptotics}

In this section, we prove \cref{prop:cv_law_distance,prop:cv_law_distance_shifted}, which describe the asymptotic distribution
of the distance between 2 random points (also called $2$-point distance) in $\P_{\alpha n,n}$ and $\SS_n$, respectively.
We start with the case of shifted staircases, which is easier.

\subsection{2-point distance in $\SS_n$}
Recall that the elements in $\SS_n$
are exactly the lattice paths starting at (0,0),
ending somewhere on the line $x=n$, and using steps 
of the form $U=(1,1)$ and $D=(1,-1)$. 
Let $p^n$ and $q^n$ be independent uniform random elements in $\SS_n$.
Seeing $p^n$ and $q^n$ as lattice paths, we write $p^n_i$
and $q^n_i$
for their heights after $i$ steps.
Clearly, for all $n \ge 1$ and $i \le n$,
one has $p^n_i=X_1+\dots+X_i$ and
$q^n_i=Y_1+\dots+Y_i$,
where $(X_j)_{j \ge 1}$ and $(Y_j)_{j \ge 1}$ are
independent sequences of i.i.d.~Rademacher random variables of parameter $1/2$.
Using \Cref{eq:distance_shifted}, we write
\[d(p^n,q^n)= \frac12 \sum_{i=1}^n |p^n_i-q^n_i|
= \frac{n}2 \int_0^1 \lvert\, p^n_{\lceil nt \rceil}-q^n_{\lceil nt \rceil}\rvert dt.\]

By Donsker's theorem, the processes
\[\left(\tfrac1{\sqrt n} p^n_{\lceil nt \rceil}\right)_{t \le 1} \text{ and }\left(\tfrac1{\sqrt n} q^n_{\lceil nt \rceil}\right)_{t \le 1}  \]
converge in distribution to independent Brownian motions $(B_t)_{t \le 1}$ and $(B'_t)_{t \le 1}$
in Skorokhod space $D[0,1]$
(see \cite[Chapter 3]{billingsley_convergence} for background on Skorokhod space). Since integration is a continuous functional on $D[0,1]$,
we have
\[n^{-3/2} d(p^n,q^n)= \frac12
\int_0^1 \left\lvert n^{-1/2} p^n_{\lceil nt \rceil}-n^{-1/2} q^n_{\lceil nt \rceil}\right\rvert dt \stackrel{d}{\longrightarrow} \frac12
\int_0^1 |B_t-B'_t| dt,\]
where $\stackrel{d}{\longrightarrow}$ means convergence in distribution.
But $B_t-B'_t \stackrel{d}= \sqrt 2 \, B_t$,
proving that $n^{-3/2} d(p^n,q^n)$
converges in distribution to $\frac1{\sqrt 2} \int_0^1 |B_t| dt$, as claimed in \cref{prop:cv_law_distance_shifted}.

It remains to prove moment convergence. By \cite[Corollary of Theorem 25.12]{billingsley_probability}, it suffices to show that for each $s>1$, the sequence of $s$th moments of $n^{-3/2} d(p^n,q^n)$ is bounded as $n$ tends to $+\infty$.
We have
\[n^{-3/2} d(p^n,q^n) \le n^{-1/2} \max_{i \le n} |p^n_i|
+ n^{-1/2} \max_{i \le n} |q^n_i|.\]
Both terms in the upper bound are identically distributed, so we only consider the first one.
By Doob's maximal inequality,
we have
\[ \E\left[\left(\max_{i \le n} |p^n_i|\right)^s\right] \le \left(\frac{s}{s-1}\right)^s
\E\Big[|p^n_n|^s\Big].\]
Since $p^n_n$ is a sum of $n$ i.i.d.~{\em centered}
random variables, we have the following classical bound on its moments
(see, e.g., \cite{petrov1989moments}):
\[ \E\Big[|p^n_n|^s\Big]
\le C(s)\, n^{s/2}\, \E\big[ |X_1|^s \big],\]
where $C(s)$ is a constant depending only on $s$.
In particular the $s$th moment of $n^{-1/2} p^n_n$ is bounded (as $n$ tends to $+\infty$).
Consequently, the $s$th moment of 
$n^{-1/2} \max_{i \le n} p^n_i$
is bounded, and that of $n^{-3/2} d(p^n,q^n)$
is as well. This proves that the convergence
of $n^{-3/2} d(p^n,q^n)$ to $\frac1{\sqrt 2} \int_0^1 |B_t| dt$ holds also in moments, concluding the proof of \cref{prop:cv_law_distance_shifted}. \qed

\subsection{2-point distance in $P_{\alpha n,n}$}
We now turn to the case of rectangles.
Let $p^n$ and $q^n$ be independent uniform random elements in $P_{\alpha n,n}$, seen as lattice paths
from $(0,0)$ to ${((\alpha+1)n, (\alpha-1)n)}$.
These paths  $p^n$ and $q^n$ can be constructed as partial sums of sequences of i.i.d.~random variables {\em under some conditioning}.
To this end, let $(X_j)_{j \ge 1}$ and $(Y_j)_{j \ge 1}$ be
independent sequences of i.i.d.~Rademacher random variables of parameter $\alpha/(\alpha+1)$.
We also let $(\tilde X^n_j)_{j \ge 1}$ have the distribution
of $(X_j)_{j \ge 1}$ conditioned to the event $\sum_{j \le (\alpha+1)n} X_j=(\alpha-1)n$.
Then one has the equality in distribution
\[\big(p^n_i\big)_{i\le (\alpha+1)n} \stackrel{d}=
\left( \sum_{j \le i} \tilde X^n_j \right)_{i\le (\alpha+1)n}.\]

Recall that we are interested in the quantity
\begin{equation}\label{eq:distance_shifted_integral}
n^{-3/2} D_{\alpha,n} = n^{-3/2} d(p^n,q^n) =
n\frac1{2n^{3/2}} \sum_{i=1}^n |p^n_i-q^n_i|
= \frac{\alpha+1}{2 \sqrt n} \int_0^1 |p^n_{\lceil (\alpha+1)nt \rceil}-q^n_{\lceil (\alpha+1)nt \rceil}| dt.
\end{equation}
A version of Donsker's theorem for conditioned partial sums has been proved by Liggett \cite{liggett1968invariance} (see in particular the corollary of Theorem 4 there).
In our case, the centered process
\[\left(\tfrac1{\sigma \, \sqrt {(1+\alpha)n}} \big(p^n_{\lceil (\alpha+1)nt \rceil} - \lceil nt \rceil (\alpha-1) \big)\right)_{0 \le t \le 1} \]
converges in distribution to $B_0(t)$
in Skorokhod space $D[0,1]$,
where $\sigma^2=\Var(X_1)$ and $B_0(t)$ is a Brownian bridge.
A similar convergence result holds for $q^n$
with an independent Brownian bridge $B'_0(t)$.

Using the continuity of taking integrals on $D[0,1]$, the quantity in \eqref{eq:distance_shifted_integral}
converges in distribution to
\[ \frac12 (\alpha+1) \, \sigma\, \sqrt{\alpha+1} \int_0^1 |B_0(t) -B'_0(t)| dt.\]
An easy computation gives $\sigma=2\sqrt{\alpha}/(\alpha+1)$, while $B_0(t) -B'_0(t)\stackrel{d}=\sqrt{2} B_0(t)$ in distribution.
Consequently, $n^{-3/2} D_{\alpha,n}$ converges in distribution
to $\sqrt{2\alpha(\alpha+1)} \int_0^1 |B_0(t)| dt$, as claimed in \cref{prop:cv_law_distance}.

It remains to prove moment convergence.
As above, we shall prove that for any $s>1$, the random variable $n^{-3/2} D_{\alpha,n}$
has a bounded $s$th moment as $n$ tends to $+\infty$.
Using the convexity of the map $t \mapsto |t|^s$, we obtain
\begin{equation}
\label{eq:bounding_Ens}
n^{-s} D_{\alpha,n}^s = 2^{-s} \left(\frac1{n} \sum_{i \le (\alpha+1)n} |p^n_i-q^n_i|\right)^s
\le \frac{2^{-s}}n \sum_{i \le (\alpha+1)n} 
|p^n_i-q^n_i|^s \le \frac{1}{n} \sum_{i \le (\alpha+1)n}
\frac{|\bar p^n_i|^s+|\bar q^n_i|^s}{2},
\end{equation}
where $\bar p^n_i=p^n_i -i \frac{\alpha-1}{\alpha+1}$
is the centered version of $p^n_i$ (and idem for $q$). Writing $\bar X_i=X_i - \frac{\alpha-1}{\alpha+1}$, we have
\[\E\big[ |\bar p^n_i|^s \bar]
= \E\left[ \bigg| \sum_{j \le i} \bar X_j \bigg|^s  \Bigg| \sum_{j \le (\alpha+1)n} \bar X_j=0 \right]
= \sum_{k} |k|^s\, \mathbb P\left[ \sum_{j \le i} \bar X_j =k  \Bigg| \sum_{j \le (\alpha+1)n} \bar X_j=0 \right],
\]
where the sum runs over possible values $k$ for $\sum_{j \le i} \bar X_j$.
Using the independence of the $\bar X_j$, we have
\begin{align*}
    \mathbb P\left[ \sum_{j \le i} \bar X_j =k  \Bigg| \sum_{j \le (\alpha+1)n} \bar X_j=0 \right] &=\frac{\mathbb P\left[ \sum_{j \le i} \bar X_j =k \ \wedge \ \sum_{j \le (\alpha+1)n} \bar X_j=0\right]}
    {\mathbb P \left[\sum_{j \le (\alpha+1)n} \bar X_j=0 \right]} \\
& = \mathbb P\left[ \textstyle \sum_{j \le i} \bar X_j =k\right] \cdot 
\frac{\mathbb P\left[ \sum_{i< j \le (\alpha+1)n} \bar X_j=-k\right]}{\mathbb P \left[\sum_{j \le (\alpha+1)n} \bar X_j=0 \right]}.
\end{align*}
Take $i \le (\alpha+1)n/2$.
The probabilities in the fraction can be evaluated asymptotically---uniformly in $k$---through the local limit theorem (see, e.g., \cite[Theorem 3.5.2]{durrett2019probability}), which yields
\begin{align*}
   \mathbb P \left[\sum_{j \le (\alpha+1)n} \bar X_j=0 \right] &\sim \frac{2}{\sigma \sqrt{2 \pi (\alpha+1) n}};\\
\mathbb P\left[ \sum_{i< j \le (\alpha+1)n} \bar X_j=-k\right] &=
\frac{2 e^{-k^2/2((\alpha+1)n-i) \sigma^2}}{\sigma \sqrt{2 \pi ((\alpha+1) n-i)}} + o(n^{-1/2}) \le \frac{2+o(1)}{\sigma \sqrt{\pi (\alpha+1) n}}.
\end{align*}
In particular, the quotient is bounded by $2$ for $n$ large enough, uniformly in $k$.
Bringing everything together, we obtain that for $n$ large enough and $i \le (\alpha+1)n/2$,
\[\E\big[ |\bar p^n_i|^s \bar]
\le \sum_{k} |k|^s \cdot 2 \mathbb P\left[ \textstyle \sum_{j \le i} \bar X_j =k\right]
= 2 \E\left[ \bigg| \sum_{j \le i} \bar X_j \bigg|^s
\right].\]
Since the $\bar X_j$ are i.i.d.~{\em centered} random variables with finite moments, we have (see, e.g., \cite{petrov1989moments})
\[ \E\left[ \bigg| \sum_{j \le i} \bar X_j \bigg|^s
\right] \le C(s) i^{s/2} \E\big[ |X_1|^s \big],\]
where $C(s)$ is a constant depending only on $s$
(and $\alpha$ in the sequel),
which may change from line to line.
Therefore, for $n$ large enough and $i \le (\alpha+1)n/2$, we
have 
\[\E\big[ |\bar p^n_i|^s \bar] \le  C(s)\,  n^{s/2}. \]
By symmetry, this holds also for $i \ge (\alpha+1)n/2$ (we have $p^n_i \stackrel{d}= p^n_{(\alpha+1)n-i}$ for all $i \le (\alpha+1)n$).
Going back to \eqref{eq:bounding_Ens}, we get
\[n^{-s} \E \big[ D_{\alpha,n}^s \big]
\le (\alpha+1)\, C(s)\, n^{s/2}.\]
Thus $n^{-3/2} D_{\alpha,n}$ has bounded moments,
and the convergence to 
$\sqrt{2\alpha(\alpha+1)} \int_0^1 |B_0(t)| dt$
holds also in moments.
\cref{prop:cv_law_distance_shifted} is proved. \qed

\section{Higher moments}
\label{sec:higher_moments}

Given a finite graph $G=(V,E)$ and a positive integer $r$, let $d^r(G)$ denote the moment $d^r(G)=\sum_{(p,q)\in V\times V}d(p,q)^r$. 
The convergence of the distance between two random elements in distribution and in moments established in the previous section
yield some asymptotic estimates for $d^r(P_{\alpha n,n})$ and $d^r(Q_n)$.
In this section, we give an exact expression of $d^2(P_{k,n-k})$. The same method can, in principle, be used to compute the moments $d^r(P_{k,n-k})$ one by one. Similarly, one could use a similar method, drawing from the ideas in \Cref{sec:bij_stair,sec:proof_stair}, to compute the moments $d^r(Q_n)$. For the sake of brevity, we merely state the explicit formula for $d^2(Q_n)$ and omit the computation.

\begin{proposition} We have
    \[\d^2(P_{m,k}) = \frac{1}{30} \frac{m+k+1}{m+k} \binom{m+k}{m-1}\binom{m+k}{k-1} \left( 7mk^2 + 7m^2k + 3m^2 + 10mk+ 3k^2  + 3m + 3k + 4 \right)\] and \[\d^2(Q_{n}) = 2^{2n-4} n \left(20 + 15 (n - 2) + 3 (n - 2)^2\right).\]
\end{proposition}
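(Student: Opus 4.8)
The plan is to extend the bijective / generating-function machinery of Sections~\ref{sec:rect_bijections}--\ref{sec:proof_rect_thm} from the \emph{first} moment to the \emph{second} moment, by tracking the square of the area under a bicolored (resp.\ bilateral) Motzkin path instead of the area itself. Recall from~\Cref{prop:bija} that pairs $(p,q)\in\Pt_{n,n-k}$ biject with bilateral Motzkin paths $r\in\Mb_{n,k}$ with $\d(p,q)=\d(r)$. Hence $\d^2(P_{k,n-k})=\sum_{r\in\Mb_{n,k}}\d(r)^2$, and we want the generating function $\Mbm^{(2)}(x,u):=\sum_{n,k}x^nu^k\sum_{r\in\Mb_{n,k}}\d(r)^2$. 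First I would set up the analogous hierarchy of auxiliary series: $\M$ and $\Mm$ as before, the ``squared-area'' series $\Mm^{(2)}(x,u):=\sum x^nu^k\sum_{r\in\M_{n,k}}\d(r)^2$ over bicolored Motzkin paths, and likewise $\Mbm^{(2)}$ over bilateral ones.

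The key step is the first-return decomposition for the squared area. When a path $r\in\M$ is decomposed at its first return to the $x$-axis into a raised arch (containing a path $r'\in\M$, shifted up by one, prefixed by $U$ and suffixed by $D$) followed by a tail $r''\in\M$, the area is additive: $\d(r)=\d(\text{arch})+\d(r'')$, where $\d(\text{arch})=\overline\d(r')+(\text{length of }r')+1$ accounting for the two extra steps and the unit lift, which is exactly the $\tfrac{d}{dx}(x\M)$ bookkeeping already used in~\Cref{prop:mm_decomp}. Squaring gives the cross term $\d(r)^2=\d(\text{arch})^2+2\,\d(\text{arch})\,\d(r'')+\d(r'')^2$; the cross term $2\,\d(\text{arch})\,\d(r'')$ produces a product $2\,(\text{arch-area GF})\cdot\Mm$, and $\d(r'')^2$ produces $\Mm^{(2)}$. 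The ``arch-area squared'' term is the genuinely new ingredient: it involves $\sum_{r'}(\overline\d(r')+\len(r')+1)^2$, which expands into $\overline{\Mm^{(2)}}$ (second moment of $\overline\d$ over $\M$), a mixed $\overline\d\cdot\len$ term, and a pure length-squared term. All of these are obtainable by applying $x\,\partial_x$ once or twice to known series ($\M$, $\Mm$) — for instance $\sum_{r}\len(r)^2 x^{\len(r)}u^{k}=(x\partial_x)^2\M$, and $\sum_r \overline\d(r)\len(r)\cdots=x\partial_x\Mm$ up to the $\tfrac12|r_n|$ correction relating $\d$ and $\overline\d$ on a Motzkin \emph{path} (where $r_n=0$, so in fact $\overline\d=\d$ on $\M$, simplifying matters; the $\overline\d$ vs $\d$ distinction only bites for prefixes). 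Carrying this out yields a \emph{linear} functional equation for $\Mm^{(2)}$ with coefficients that are explicit rational/algebraic functions of $x,u$, solvable in closed form; then an identical decomposition for $\Mb$ (first-return version of~\Cref{prop:mb_decomp}, with the factor $2$ for reflecting the first arch) gives a linear equation for $\Mbm^{(2)}$ in terms of $\Mbm^{(2)}$ itself, $\Mbm$, $\Mb$, $\Mm^{(2)}$, $\Mm$, $\M$ and their $x$-derivatives. Substituting $u=y/x$ and extracting $[x^my^k]$ — exactly as in Section~\ref{sec:proof_rect_cors}, via partial fractions in the variables $q=\sqrt x$, $t=\sqrt y$ — should deliver the stated closed form after simplification; one verifies the polynomial factor $7mk^2+7m^2k+3m^2+10mk+3k^2+3m+3k+4$ by matching. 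The $\d^2(Q_n)$ formula is obtained in the same way from the prefix-series of~\Cref{sec:proof_stair}: build $\Vv^{(2)}(x):=\sum_n x^n\sum_{r\in\mathcal V_n}\overline\d(r)^2$ through its first-return decomposition in terms of $\Nn^{(2)}$, $\Vv$, $\mathcal N^{(2)}$, $\Mm^{(2)}(x,1)$ and derivatives, solve, and extract coefficients; here the $\overline\d$ vs $\d$ correction is genuinely present but is a harmless quadratic-in-$|r_n|$ adjustment handled by the same $\partial_x$ calculus.

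As an independent check I would, following the remark after~\Cref{cor:Wiener_JSn} and the end of Section~\ref{sec:proof_rect_cors}, confirm that the proposed closed forms satisfy the appropriate P-recurrences (obtainable by creative telescoping / a computer algebra system) with matching initial values; combined with the asymptotic second moments $\mathbb E[D_{\alpha,n}^2]\sim\tfrac{7}{30}\alpha(1+\alpha)n^3$ and $\mathbb E[E_n^2]\sim\tfrac3{16}n^3$ already established in~\Cref{sec:asymptotics}, this pins down the leading behavior $\d^2(P_{\alpha n,n})/|P_{\alpha n,n}|^2\to\tfrac7{30}\alpha(1+\alpha)n^3$ and $\d^2(Q_n)/4^n\to\tfrac3{16}n^3$, providing a strong consistency test on the formulas. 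The main obstacle I anticipate is purely bookkeeping: correctly assembling the ``squared arch-area'' contribution, i.e.\ keeping straight the interaction between the unit vertical lift, the two boundary steps, and the area/length generating operators $x\partial_x$, so that the $2\,\d(\text{arch})\,\d(\text{tail})$ cross terms and the $\tfrac12|r_n|$ corrections for prefixes are all accounted for exactly once. Once the functional equations are correctly written down, everything downstream — solving the linear equations, the $q,t$ partial-fraction extraction, and the final simplification — is mechanical and of the same difficulty as the first-moment computation already carried out.
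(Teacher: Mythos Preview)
Your proposal is correct and follows essentially the same approach as the paper: set up the second-moment generating functions $\MMM(x,u)$ and $\WWW(x,u)$ (your $\Mm^{(2)}$ and $\Mbm^{(2)}$), derive linear functional equations for them via first-return decomposition by expanding $(d(p)+d(q)+\len(p)+1)^2$, solve using the already-known expressions for $\M,\Mm,\Mb,\Mbm$, and extract coefficients after substituting $u=y/x$. The paper carries this out only for $P_{m,k}$ and merely states the $Q_n$ formula, so your sketch of the prefix-series analogue for $Q_n$ is exactly what the paper says one would do; your additional P-recurrence and asymptotic consistency checks are not in the paper but are sensible extras.
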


\begin{proof}
As mentioned above, we will only prove the first formula. Given a bilateral Motzkin path $p$, let $\len(p)$ denote the length of $p$, and let $\mathcal U(p)$ be the number of steps in $p$ of the form $U$ or $O_1$. Recall that \[\W(x,u)=\sum_{p\in\W}x^{\len(p)}u^{\mathcal U(p)},\quad \Ww(x,u)=\sum_{p\in \W}x^{\len(p)}u^{U(p)}d(p),\] \[\M(x,u)=\sum_{p\in\M}x^{\len(p)}u^{\mathcal U(p)}, \quad\Mm(x,u)=\sum_{p\in \M}x^{\len(p)}u^{U(p)}d(p).\] Let \[\WWW(x,u)=\sum_{p\in \W}x^{\len(p)}u^{U(p)}d(p)^2\quad\text{and}\quad\MMM(x,u)=\sum_{p\in \M}x^{\len(p)}u^{U(p)}d(p)^2.\] It follows from \Cref{prop:bija} that $\WWW(x,u)=\sum_{n\geq 0}\sum_{k\geq 0}d^2(P_{k,n-k})x^nu^k$. 

The contribution to $\MMM(x,u)$ coming from paths that start with $O_1$ or $O_2$ is $x(u+1)\MMM(x,u)$. The other paths that contribute to $\MMM(x,u)$ begin with $U$ and have the form $UpDq$ for some $p,q\in\M$. We find that \begin{equation}\label{eq:Sigma}
\MMM(x,u)=x(u+1)\MMM(x,u)+ux^2\Sigma,
\end{equation}
where \[\Sigma=\sum_{p,q\in\M}x^{\len(p)+\len(q)}u^{\mathcal U(p)+\mathcal U(q)}(d(p)+d(q)+\len(p)+1)^2.\] We can write 
\begin{align*}
(d(p)+d(q)+\len(p)+1)^2&=(d(p)^2+d(q)^2)+(\len(p)+1)^2+2d(p)(\len(p)+1) \\ 
&+2d(q)(\len(p)+1)+2d(p)d(q)
\end{align*}
to find that 
\begin{align}\label{eq:Sigma2}
\Sigma&=2\MMM(x,u)\M(x,u)+\M(x,u)\frac{\partial}{\partial x}\left(x\frac{\partial}{\partial x}(x\M(x,u))\right)+2\M(x,u)\frac{\partial}{\partial x}(x\Mm(x,u)) \nonumber\\ 
&+2\Mm(x,u)\frac{\partial}{\partial x}(x\M(x,u))+2\Mm(x,u)^2.
\end{align}

A similar argument yields the functional equation 
\begin{equation}\label{eq:Sigma'}
\WWW(x,u)=x(u+1)\WWW(x,u)+2ux^2\Sigma', 
\end{equation}
where 
\begin{align}\label{eq:Sigma'2}
\Sigma'&=\sum_{\substack{p\in\M \\ q\in\W}}x^{\len(p)+\len(q)}u^{\mathcal U(p)+\mathcal U(q)}(d(p)+d(q)+\len(p)+1)^2 \nonumber\\ 
&=\MMM(x,u)\W(x,u)+\WWW(x,u)\M(x,u)+\W(x,u)\frac{\partial}{\partial x}\left(x\frac{\partial}{\partial x}(x\M(x,u))\right) \nonumber \\ 
&+2\W(x,u)\frac{\partial}{\partial x}(x\Mm(x,u))+2\Ww(x,u)\frac{\partial}{\partial x}(x\M(x,u))+2\Mm(x,u)\Ww(x,u).
\end{align}

We already computed explicit formulas for $\M(x,u)$, $\Mm(x,u)$, $\W(x,u)$, and $\Ww(x,u)$ in \Cref{prop:m_decomp,prop:mb_decomp,prop:mm_decomp,prop:mbm_decomp}. Combining those formulas with \Cref{eq:Sigma,eq:Sigma2,eq:Sigma',eq:Sigma'2}, we can derive the explicit formula
\[\WWW(x,u)=\frac{2 u x^2 \left((u-1)^2 (u+1) x^3-((u-8) u+1) x^2-(u+1) x+1\right)}{\left((u x+x-1)^2-4 u x^2\right)^{7/2}}.\] Setting $u=y/x$ and extracting coefficients yields the desired explicit formula for $d^2(P_{m,k})$. 
\end{proof}

\section{Open problems}

Comparing the results of Proposition~\ref{prop:mb_decomp} and~\ref{prop:mbm_decomp}, we get the intriguing equation
\begin{equation}
\label{eq:interesting_identity}
\Mbm=2ux^2\;\Mb^4.
\end{equation} 

A direct proof of this would be interesting in itself and could lead to a bijective proof of~\Cref{cor:coeff} via the explicit formula \eqref{eq:expansion_bilateral}.  Recall that $\Mbm$ counts pairs of paths where a cell in the symmetric difference is marked.  The connected component where this cell occurs corresponds to a part where the two paths only meet at their beginning and end (this forms a \emph{parallelogram polyomino}), and the generating function $\Mb^2$ naturally enumerates the rest of the paths. It follows that a bijective proof of \eqref{eq:interesting_identity} reduces to a bijective proof that $\Mb^2$ enumerates the total area of parallelogram polyominoes. The specialization $u=1$ is known \cite{dellungo2004bijection}.

As minuscule lattices arise as the weak order on certain maximal parabolic quotients of finite Coxeter groups, it would be interesting to extend our results to other parabolic quotients.  Minuscule lattices also appear as certain crystal graphs; one could also ask about the Wiener indices of more general crystals.

\section*{Acknowledgements}
V.F., P.N., and N.W.~thank the organizers of the \href{https://www.mfo.de/occasion/2250/www_view}{2022 Oberwolfach conference in Enumerative Combinatorics} for inviting them to a wonderful conference.   P.N.~was partially supported by the project ANR19-CE48-011-01 (COMBIN\'E).  N.W.~was partially supported by Simons Foundation Collaboration Grant No.~585380.
V.F.~is partially supported by the Program ``Future Leader''
of the Initiative ``Lorraine Universit\'e d'Excellence'' (LUE). C.D. was supported by the National Science Foundation under Award No. 2201907 and by a Benjamin Peirce Fellowship at Harvard University.

Computer exploration to discover the simple exact formulas established in this paper have been carried out
with the computer algebra software SageMath \cite{sagemath}, and we are grateful to its developers for their work.

\bibliographystyle{bibli_perso}
\renewcommand*{\bibliofont}{\normalsize}
\bibliography{distance}

\end{document}